\definecolor{midnightblue}{rgb}{0.1, 0.1, 0.44}
\newcommand*{\mailurl}[1]{\href{mailto:#1}{\nolinkurl{#1}}}
\newcommand{\Ball}{\mathrm{B}} 
\newcommand{\Rsp}{\mathbb{R}}
\newcommand{\Zsp}{\mathbb{Z}} 
\newcommand{\E}{E} 
\newcommand{\Card}{\mathrm{Card}} 
\newcommand{\Ent}{\mathcal{H}} 
\newcommand{\Probac}{\Prob_\text{ac}} 
\newcommand{\Wass}{W}
\newcommand{\dd}{\d}
\newcommand{\nr}{\norm}
\newcommand{\sca}[2]{\langle#1|#2\rangle}
\title{Lagrangian discretization\\ of crowd motion and linear diffusion}
\author{
	Hugo Leclerc\thanks{Laboratoire de Math\'ematiques d'Orsay,
	Université Paris-Sud,
	\mailurl{hugo.leclerc@math.u-psud.fr}.}
\and
	Quentin Mérigot\thanks{Laboratoire de Math\'ematiques d'Orsay,
	Université Paris-Sud,
	\mailurl{quentin.merigot@math.u-psud.fr}.}
\and
	Filippo Santambrogio\thanks{Institut Camille Jordan,
	Université Claude Bernard Lyon 1, 
	\mailurl{santambrogio@math.univ-lyon1.fr}.}
\and
	Federico Stra\thanks{Chaire d'Analyse Math\'ematique, Calcul des Variations et EDP, EPFL, 
	\mailurl{federico.stra@epfl.ch}.}
}
\begin{document}

\maketitle

\begin{abstract}
We study a model of crowd motion following a gradient vector field,
with possibly additional interaction terms such as
attraction/repulsion, and we present a numerical scheme for its
solution through a Lagrangian discretization. The density constraint
of the resulting particles is enforced by means of a partial optimal
transport problem at each time step.  We prove the convergence of the
discrete measures to a solution of the continuous PDE describing the
crowd motion in dimension one. In a second part, we show how a similar
approach can be used to construct a Lagrangian discretization of a
linear advection-diffusion equation. Both discretizations rely on the
interpretation of the two equations (crowd motion and linear
diffusion) as  gradient flows in Wasserstein space.  We provide also a
numerical implementation in 2D to demonstrate the feasibility of the
computations.
\end{abstract}


\section{Introduction}

In this paper we present an approximation scheme to solve evolution
PDEs which have a gradient-flow structure in the space of probability
measures $\mathcal{P}(\Omega)$ endowed with the Wasserstein distance
$W_2$. Here, $\Omega\subset\setR^d$ is a given compact domain where
the evolution takes place, and the PDE will naturally be complemented
by no-flux boundary conditions. The approximation that we present is
Lagrangian in the sense that the evolving measure $\rho_t$ will be
approximated by an empirical measure of the form $\frac
1N\sum_{i=1}^N\delta_{x_i(t)}$ and we will look for the evolution of
the points $x_i$. We will use this approximation to provide an
efficient numerical method, based on the most recent developments in
semi-discrete optimal transport
\cite{de2012blue,levy2015numerical,kitagawa2016convergence,bourne2018semi}. Here,
``semi-discrete'' refers to the fact that the discretization of the
diffusion effects in the evolution equation involves computation of
the optimal transport plans between an empirical measure and diffuse
measures.

Starting from the pioneering work of Otto and Jordan-Kinderlherer-Otto
\cite{otto2001geometry,jordan1998variational} it is well-known that
some linear and non-linear diffusion equations can be expressed in terms of a
gradient flow in the space $W_2(\Omega)$. More precisely, the
Fokker-Planck equation
$$\partial_t\rho-\Delta\rho-\div(\rho\nabla V)=0$$ is the
gradient flow of the energy $E(\rho):= \int \rho\log\rho+\int Vd\rho$,
and the porous-medium equation
$$\partial_t\rho-\Delta\rho^m-\div(\rho\nabla V)=0, m\in (1,+\infty)$$ is the
gradient flow of the energy $E(\rho):=\int \frac{1}{m-1}\rho^m+\int
Vd\rho$, Recently, also the limit case $m=\infty$ has been considered,
in the framework of crowd motion \cite{gf-cm}. In this case, the
functional is $E(\rho):=\int Vd\rho$ if $\rho$ satisfies the
constraint $\rho\leq 1$, and $E(\rho)= +\infty$ otherwise; the
corresponding PDE is
$$
\begin{cases}
  \partial_t \rho + \div(\rho v) = 0  \\
  v =  -\nabla p - \nabla V \\
  0\leq \rho \leq 1, ~ 
  p\geq 0, ~
  p(1-\rho) = 0.
\end{cases}
$$ One can see the appearance of a pressure $p$ accounting for the
constraint $\rho\leq 1$.  We will come back later to the precise
meaning and formulations of this last equation.

\paragraph{Approximation by empirical measures}
Since any probability measure can be approximated by empirical
measures, it is tempting to perform an approximation scheme just by
considering the gradient flow of one of the above energy functionals
$E$ on the set $\mathcal{P}_N(\Omega)$ of uniform measures on $N$
atoms, and then let $N\to\infty$. Unfortunately, the domain of the
above functionals is reduced to absolutely continuous measures, and
its intersection with $\mathcal{P}_N(\Omega)$ is empty. The main idea
and novelty of this paper is to write $E(\rho) = F(\rho) + \int
V\dd\rho$ where $F$ is the entropy or the congestion constraint,
$$\begin{aligned}
  &F(\rho) = \int \rho\log\rho &\hbox{(linear diffusion)} \\
  \hbox{ or } & F(\rho)
= \begin{cases} 0 & \hbox{ if } \rho \leq 1 \\ +\infty & \hbox{ if
    not} \end{cases}  &\hbox{(crowd motion, m=$+\infty$),}
\end{aligned}$$
and to replace $F$ by its Moreau-Yosida regularization
$$F_\varepsilon(\mu):=\inf_\rho
F(\rho)+\frac{1}{2\varepsilon}W_2^2(\mu,\rho).$$ The energies
$F_\varepsilon$ are finite and well-defined for arbitrary probability
measures $\mu$, and converge to $F$ as $\varepsilon\to 0$. More
importantly, we will see that it is possible to compute very
efficiently $F_\eps(\mu)$ when $\mu\in \mathcal{P}_N(\Omega)$.  The
evolution of the discrete measures is then dealt with by keeping track
of the positions of the particles $X=(x_1,\dotsc,x_N)\in\setR^{Nd}$ in
the support of the associated measure $\mu_X = \frac1N \sum_{i=1}^N
\delta_{x_i}$.  Thanks to the correspondence between $X$ and $\mu_X$,
we can think of $F_\eps$ as an energy on the space of particle
positions too, given by
\begin{equation}\label{eq:D}
F_\eps(x_1,\hdots,x_N) = F_\eps\left(\frac{1}{N} \sum_{1\leq i\leq
  N} \delta_{x_i}\right)
\end{equation}
The discrete gradient flow then takes the form of a system of ODEs
\begin{equation}
  \begin{cases}
    \frac{1}{N} \dot{x}_i(t)  = - \nabla_{x_i} F_{\eps_N}(x_1(t),\hdots,x_N(t)) - \frac{1}{N}\nabla V(x_i(t)), \\
    X^N(0) = X^N_0,
  \end{cases}
\end{equation}
for a suitable choice of $\eps_N\to0$. The particles are only coupled
by the forces $-\nabla_{x_i} F_{\eps_N}(x_1,\hdots,x_N)$ due to
diffusion or to the congestion constraint. Finally, we note that the
initial condition can be selected by optimal quantization of the
initial density $\rho_0\in\Prob(\Omega)$,
\begin{equation}\label{eq:initial}
X^N_0 \in \argmin_{X\in\setR^{Nd}} W_2^2(\rho_0, \mu_X),
\end{equation}
granting, in many situations, an initial error of approximately $W_2^2(\rho_0, \mu_{X_0^N}) \lesssim (1/N)^{1/d}$.

\paragraph{Convergence}
In the paper we will present the approximation scheme and prove, in
these two cases, the convergence of the curves of empirical measures
to the solution of the corresponding PDE, under the assumption that a
certain bound on the approximate solutions themselves is
satisfied. This assumption is unnatural, and it would be desirable to
remove it, or replace it with an assumption on the approximation of
the initial data. Yet, this seems to be a non-trivial problem, which
is closely related to the general question of the convergence of
gradient flows once the functionals $\Gamma$-converge. We refer to
\cite{SanSer,serfaty2011gamma} as classical papers on this
question. In these papers, a semi-continuity property on the slopes of
the functionals is required, which is in the same spirit of the bounds
we need. These required bounds are stronger in the crowd motion case,
as the equation is non-linear and stronger compactness is needed,
while sligthly weaker in the Fokker-Planck case, which is indeed a
linear equation.  We show that such bounds can be obtained in
dimension 1 (see \S\ref{sec:bounds}).  However, we insist that these
bounds can be verified numerically in general, which makes the scheme
we propose interesting for the approximation in arbitrary dimension.

\paragraph{Comparison to existing Lagrangian schemes}
The optimal transport interpretation of advection-diffusion equations
by Otto and Jordan-Kinderlherer-Otto
\cite{otto2001geometry,jordan1998variational} has already led to many
Lagrangian schemes:

\begin{itemize}
\item In dimension $d=1$ it is quite easy to construct such Lagrangian
  schemes. This is due to the fact that the Wasserstein space
  $(\Prob(\Rsp),\Wass_2)$ can be isometrically embedded into
  $L^2([0,1])$ through the inverse cumulative distribution
  function. Discretizing probability densities in a lagrangian way
  then amounts to discretizing inverse cdfs, which can be done using
  finite elements of order one \cite{blanchet2008convergence} or two
  \cite{matthes2017convergent}, leading respectively to piecewise
  constant or piecewise linear densities.
\item In dimension $d\geq 2$, one cannot isometrically embed the
  Wasserstein space in a $L^p$ space, making the choice of
  discretization less canonical. There exists discretization based on
  piecewise constant densities over Voronoi cells
  \cite{carrillo2017numerical}, or using Gaussian mixtures (``blobs'')
  \cite{carrillo2019blob}. Evans, Savin and Gangbo
  \cite{evans2005diffeomorphisms} have proposed a general way to
  rewrite some Wasserstein gradient flows as gradient flows in the
  space of diffeomorphisms (the idea is to write $\rho_t = s_{t\#}
  \rho_0$ where $s_t$ is a time-dependent diffeomorphism), which is
  used to construct a Lagrangian discretization in
  \cite{junge2017fully}.
\end{itemize}

Our scheme could be considered as a variant of the scheme of
\cite{carrillo2017numerical} involving Voronoi cells but it is, to the
best of our knowledge, the first one to use Laguerre cells (see
\S\ref{sec:comput-my}), which are objects intrinsically related to the
Wasserstein structure of the equation. The use of Laguerre cells, easy
to handle via modern computational geometry tools, is now
state-of-the-art when the transport cost is quadratic. However, we
stress that we only use quadratic transport costs in the computation
of the Moreau-Yosida regularization of the diffusion or congestion
term, which means that we could a priori think of attacking with the
same ideas other PDEs, which have a gradient structure for other
distances. We refer for instance to \cite{agueh2005existence} for PDEs
induced by a cost functions of the form $c(x,y) = |x-y|^q$ or
\cite{mccann2009constructing} for the so-called relativistic heat
equation.

\section{Lagrangian discretization of crowd motion}
\label{sec:crowd-motion}

\paragraph{Formulation of the continuous problem}
We fix a compact domain $\Omega\subset\setR^d$ and a potential $V\in
C^1(\setR^d)$ bounded from below, e.g. $V\geq 0$.  The crowd is
described by a probability measure $\rho$ in $\Omega$. Each agent
tries to follow the gradient vector field $-\nabla V$ while ensuring
that the probability density satisfies the density constraint
$\rho\leq1$.  Therefore, we introduce the constraint set
\begin{equation} \label{eq:K}
K = \set{\rho\in\Probac(\Omega)}{\rho \leq 1}, 
\end{equation}
and we assume that $\abs{\Omega}\geq 1$ so that $K$ is non-empty.
There are a few possible ways to express this idea of constrained
motion, which are at least formally equivalent.  The first version is
straightforward, but a bit problematic to formulate rigorously. The
mass evolution is expressed by a continuity equation where the driving
vector field is the projection of $-\nabla V$ onto the tangent cone of
the set $K$:
\begin{equation}\label{eq:cm-pde-proj}
  \begin{cases}
    \partial_t \rho + \div(\rho v) = 0, & \rho\in K, \\
    v = \Pi_{T_\rho K}(-\nabla V),
  \end{cases} \\
\end{equation}
where
\[
T_\rho K = \set*{v\in L^2(\rho;\setR^d)}{
	\div(v) \geq 0 \text{ on } \{\rho=1\},
	\ v\cdot n \leq 0 \text{ on } \partial\Omega\cap\{\rho=1\} }.
\]
Note that the choice of the boundary conditions on $v\cdot n$ is not relevant at this stage, since anyway the equation $ \partial_t \rho + \div(\rho v) = 0$ is already intended with no-flux boundary conditions $\rho v\cdot n=0 $, i.e. $v\cdot n=0$ on $\{\rho>0\}$. Choosing to impose $v\cdot n\leq 0$ on the part of the boundary where the density $\rho$ is saturated allows for an easier expression of the normal cone below.
%
Indeed, projecting to the tangent cone amounts to subtracting a vector from
the normal cone, dual to the tangent cone, thereby leading to
\begin{equation}\label{eq:cm-pde-norm}
\begin{cases}
\partial_t \rho + \div(\rho v) = 0, & \rho\in K, \\
v = -\nabla V - w, & w\in N_\rho K,
\end{cases}
\end{equation}
where
\[
N_\rho K = \set*{\nabla p}{
  p \in H^1(\Omega),\ p\geq0,\ p(1-\rho) = 0 }.
\]
This can be seen by observing that $T_\rho K$ contains all
  divergence-free vector fields and hence, by Helmoltz decomposition, any
  vector $w$ in the dual cone $N_\rho K$ is such that $\rho w$ is the gradient of a scalar function. Once we identify that this function should vanish on $\{\rho<1\}$, the constraints on the pressure $p$ are obtained by dualizing
  those defining $T_\rho K$.  More explicitly,
\begin{equation}\label{eq:cm-pde-press}
\begin{cases}
\partial_t \rho + \div(\rho v) = 0, & \rho\in K, \\
v = -\nabla V - \nabla p, \\
p\geq0,\ p(1-\rho) = 0,
\end{cases}
\end{equation}
where $p$ is to be thought of as a pressure field enforcing the
density constraint.  Finally, solutions to \eqref{eq:cm-pde-proj} also
formally coincide with the gradient flow
\begin{equation}\label{eq:cm-gf}
\begin{cases}
\partial_t \rho \in -\partial\E(\rho), \\
\rho(0) = \rho_0,
\end{cases}
\end{equation}
with respect to the Wasserstein distance of the energy
 $\E:\Prob(\Omega)\to\setR\cup\{\infty\}$ given by
$$ E(\rho) = \int V \dd \rho + F(\rho),  \hbox{ where } F(\rho) = i_K(\rho) = \begin{cases}
  0 & \hbox{ if } \rho \in K \\
  +\infty& \hbox{ if not.} \end{cases}
$$ The equation \eqref{eq:cm-gf} is a differential inclusion, meaning
that a priori the element of the subdifferential which is selected is
not known. This translates into the fact that the pressure only
belongs to a cone and is not an explicit function of the density, as
it happens in other equations, for instance of porous medium
type. However, in many differential inclusion problems there is indeed
uniqueness, corresponding to the idea that only choice of the pressure
will preserve the constraint: at a very formal level, we can say that
even if we only imposed $\nabla\cdot v\geq 0$ on the saturated region
--- and this was done in order not to increase the density in the
future after saturation --- we should also have $\nabla\cdot v= 0$, at
least for $t>0$, otherwise the density would violate the constraint in
the past. This means that $p$ should be uniquely determined as the
solution of $-\Delta p=\Delta V$ on the saturated region $\{\rho=1\}$,
with Dirichlet boundary condition on the part of the boundary of this
region contained in the interior of $\Omega$ and suitable
non-homogeneous Neumann condition $\nabla p\cdot n=-\nabla V\cdot n$
on the part contined in $\partial\Omega$.


\paragraph{Discretization} As explained in the introduction,
our strategy for the numerical solution of the crowd motion is to
employ a Lagrangian discretization  of \eqref{eq:cm-gf},
meaning that we consider the time evolution of a probability measure
which remains uniform over a set containing $N$ points:
\[
\Prob_N(\setR^d) = \set*{\frac1N\sum_{i=1}^N \delta_{x_i}}{x_i\in\setR^d}.
\]
Since the intersection between the constraint set $K=
\set{\rho\in\Probac(\Omega)}{\rho \leq 1}$ and $\Prob_N(\setR^d)$ is
empty, we are forced to replace the constraint $\rho\in K$ with a
penalization, therefore considering the regularized energy given by
\[
\E_\eps(\rho) = \int_\Omega V \d\rho
	+ \frac{1}{2\eps}\dist_K^2(\rho)
= \int_\Omega V \d\rho
+ \frac{1}{2\eps} \min_{\sigma\in K} W_2^2(\rho, \sigma).
\]
Note that $\frac{1}{2\eps} \dist_{K}^2$ is the Moreau-Yosida
regularization of the convex indicator function $F = i_K$ of the set $K$.

The evolution of the discrete measures is dealt with by keeping track
of the positions of the particles $X=(x_1,\dotsc,x_N)\in\setR^{Nd}$,
to which corresponds the associated measure $\mu_X = \frac1N
\sum_{i=1}^N \delta_{x_i}$.  Thanks to the correspondence between $X$
and $\mu_X$, we can think of $\E_\eps$ as an energy on the space of
particle positions too, given by
\[
\E_\eps(X) = \E_\eps(\mu_X)
= \frac1N \sum_{i=1}^N V(x_i) + \frac1{2\eps}\dist^2_K(\mu_X).
\]
Assume for a moment that the point set $X \in \Rsp^{Nd}$ does not belong to the set
   \begin{equation}\label{eq:DN}
    \mathcal{D}_N = \{ (x_1,\hdots,x_N)\in\Rsp^{Nd}\mid x_i=x_j \hbox{
      for some } i\neq j\}.
  \end{equation}
Then it is easy to see that for small perturbations $V = (v_1,\hdots,v_N)
\in \Rsp^{Nd}$, the optimal transport map
between $\mu_X$ and $\mu_{X+ V}$ simply maps $x_i$ to $x_i + v_i$,
thus showing that the map $X\mapsto \mu_{X}$ is locally isometric,
i.e. if $\nr{V}$ is small enough,
\[  \Wass_2^2(\mu_{X}, \mu_{X+V}) = \frac{1}{N} \nr{V}^2. \]
This suggests to replace the continuous Wasserstein gradient flow
  {\eqref{eq:cm-gf}} by the discrete gradient flow {\eqref{eq:cm-ode}}
  with respect to the Euclidean metric: 
\begin{equation}\label{eq:cm-ode}
\begin{cases}
\frac1N \dot X^N(t) = -\nabla\E^N\bigl(X^N(t)\bigr), \\
X^N(0) = X^N_0,
\end{cases}
\end{equation}
where $\E^N = \E_{\eps_N}$ for a suitable choice of $\eps_N\to0$. The
initial condition can be selected by optimal quantization as in \eqref{eq:initial}.

Given $\mu\in\Prob_N(\setR^d)$, let $\sigma\in K$ be the projection of $\mu$
onto $K$, i.e.\ a minimizer of
\[
\min_{\sigma\in K} W_2^2(\mu, \sigma).
\]
Its existence follows by compactness, while its uniqueness and
continuity with respect to $\mu$ are guaranteed by Proposition 5.2 in \cite{DePhilippis2016}.  Let also $T:\Omega\to\setR^d$ be the
(unique) optimal transport map from $\sigma$ to $\mu$. The cell
$L_i=T^{-1}(x_i)$ represents the part of the mass of $\sigma$ which is
attached to the particle $x_i$.
Denoting by $\beta_i(X) = \dashint_{L_i} x\d\sigma(x) = \int_{L_i} x\d\sigma(x)/\int_{L_i} \d \sigma(x)$ the barycenter of the cell $L_i$,
\autoref{prop:derivative} gives
\[
\frac{\partial\E^N}{\partial x_i}(X) = \frac1N \nabla V(x_i)
	+ \frac1{N\eps_N} \bigl(x_i-\beta_i(X)\bigr),
\]
and therefore \eqref{eq:cm-ode} becomes more explicitly
\begin{equation}\label{eq:cm-ode-expl}
\begin{cases}
\displaystyle
\dot x_i^N(t) = -\nabla V\bigl(x_i^N(t)\bigr)
	+ \frac1{\eps_N} \bigl[\beta_i\bigl(X^N(t)\bigr) - x_i^N(t)\bigr], \\
X^N(0) = X^N_0.
\end{cases}
\end{equation}

Moreover, \autoref{prop:derivative} shows that
$\frac1{2\eps}\dist^2_K$ is $\frac1{2N\eps}$-concave, which proves
that the vector field $-\nabla\E_\eps(X)$ is well-defined a.e. and
provides several useful properties of the flow of this vector
field. In particular, following \cite{Santambrogio2017} (slightly
adapting the proof of Proposition 2.3), one can prove the existence,
for every initial datum, of a solution of $$X'(t)\in
-\partial^+\E_\eps(X(t)),$$
where $\partial^+$ is the superdifferential
of semiconcave functions. Solutions of this ODE satisfy a reverse
Gronwall inequality which provides $$|X_1(t)-X_2(t)|\geq
e^{-Ct}|X_1(0)-X_2(0)|,$$ opposite to what happens for the gradient
flows of semiconvex functions: this is not useful for uniqueness
purposes, but states that solutions cannot concentrate too much, and
in particular we obtain that for a.e. initial datum the flow avoids
for a.e. time the non-differentiability set. In particular, we have
existence of solutions of $X'(t)=-\nabla\E_\eps(X(t)),$ in the
almost everywhere sense and for almost every initial datum. Therefore
it is always possible to find $\mu_N(0)$ and $X_N$ that satisfy the
hypothesis of the following theorem. However, we insist that the goal
of the present paper is to show approximation results, and the
existence proof for the ``discrete'' problem with a finite number of
particles is not the core of our analysis, which explains why we do
not provide more details about the existence for a.e. intial datum.

\begin{theorem}[Convergence of the discrete scheme]\label{thm:convergence-crowd-motion}
For every $N\in\setN$, let $\eps_N\in(0,\infty)$ and $\mu_N(0)\in\Prob_N(\setR^d)$
be such that
\[
\frac1{\eps_N} W_2^2(\rho_0, \mu_N(0)) \leq C, \qquad
\lim_{N\to\infty}\eps_N = 0.
\]
Let $X_N\in C^1([0,T], \setR^{Nd})$ be a solution of \eqref{eq:cm-ode} or,
equivalently, \eqref{eq:cm-ode-expl} and let $\mu_N:[0,T]\to\Prob_N(\setR^d)$
be the corresponding curve of measures.
Assume that
\begin{equation} \label{eq:cm-boundpres}
\frac1{\eps_N^2} \int_0^T W_2^2\oleft(\sigma_N,\frac{1}{N}\sum_{i=1}^N\delta_{\beta_i(X^N)}\right) \d t
\leq C,
\end{equation}
for some constant independent of $N$ and that $\rho_0\in K$.
Then, as $N\to\infty$, and up to subsequences,  $\mu_N\to\rho$ in $C^0\bigl([0,T]; W_2(\setR^d)\bigr)$,
where $\rho$ is a weak solution to
\eqref{eq:cm-pde-press}.
\end{theorem}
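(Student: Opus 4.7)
The plan follows the classical compactness plus weak-limit template for discrete Wasserstein gradient flows.

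\textbf{Compactness and $\rho(t)\in K$.} Since $X\mapsto\mu_X$ is locally a $1/\sqrt{N}$-isometry, the ODE \eqref{eq:cm-ode} gives the dissipation identity $\E_{\eps_N}(\mu_N(t))+\int_0^t|\dot\mu_N|_{W_2}^2\dd s=\E_{\eps_N}(\mu_N(0))$, whose right-hand side is uniformly bounded by the hypothesis on the initial data. This yields $1/2$-Hölder equicontinuity of $\mu_N$ in $W_2$; Arzelà-Ascoli (with tightness from $\Omega$ compact) then extracts a subsequence $\mu_N\to\rho$ in $C^0([0,T],W_2)$. The same bound forces $\tfrac{1}{2\eps_N}\dist_K^2(\mu_N(t))\leq C$, so $W_2(\sigma_N(t),\mu_N(t))\leq C\sqrt{\eps_N}\to 0$ uniformly in $t$; by $W_2$-closedness of $K$, $\rho(t)\in K$.

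\textbf{Weak continuity equation.} Testing \eqref{eq:cm-ode-expl} against $\varphi\in C^1_c([0,T)\times\Omega)$ gives
\[
\int_0^T\!\int\partial_t\varphi\dd\mu_N\dd t+\int\varphi(0)\dd\mu_N(0)=\int_0^T\!\int\nabla\varphi\cdot\nabla V\dd\mu_N\dd t-I_N,
\]
where $I_N=\eps_N^{-1}\int_0^T\!\int\nabla\varphi(T_N(y))\cdot(y-T_N(y))\dd\sigma_N\dd t$. By Taylor and Cauchy-Schwarz, replacing $\nabla\varphi(T_N(y))$ with $\nabla\varphi(\beta(y))$ (where $\beta(y)=\beta_i$ on $L_i$) creates an error bounded by $\|D^2\varphi\|_\infty\bigl(\int_0^T\eps_N^{-2}\int|T_N-\beta|^2\dd\sigma_N\dd t\bigr)^{1/2}\bigl(\int_0^T W_2^2(\sigma_N,\mu_N)\dd t\bigr)^{1/2}=O(\sqrt{\eps_N})$, both factors being controlled by energy dissipation alone (the first equals $\int_0^T(N\eps_N^2)^{-1}\sum|x_i-\beta_i|^2\dd t$, the second is $2\eps_N\int_0^T F_{\eps_N}(\mu_N)\dd t$). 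Because $\int_{L_i}(y-\beta_i)\dd\sigma_N=0$, the remaining principal part of $I_N$ equals $\int_0^T\!\int\nabla\varphi\cdot\xi_N\dd\mu_\beta^N\dd t$ with $\xi_N(\beta_i):=(\beta_i-x_i)/\eps_N$ and $\mu_\beta^N:=N^{-1}\sum_i\delta_{\beta_i}$. Energy dissipation gives $\xi_N\in L^2_tL^2(\mu_\beta^N)$ uniformly, while the triangle inequality and $W_2^2(\mu_N,\mu_\beta^N)\leq N^{-1}\sum|x_i-\beta_i|^2$ give $\mu_\beta^N\to\rho$ in $L^2_tW_2$. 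Extracting a further subsequence, $\xi_N\mu_\beta^N\rightharpoonup\xi\rho$ weakly, yielding the distributional equation $\partial_t\rho+\div(\rho(-\nabla V+\xi))=0$ with initial datum $\rho_0$ (from the hypothesis $\eps_N^{-1}W_2^2(\rho_0,\mu_N(0))\leq C$).

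\textbf{Main obstacle: identifying $\xi=-\nabla p$.} It remains to exhibit a pressure $p\geq 0$ with $p(1-\rho)=0$ such that $\xi=-\nabla p$ distributionally. The natural discrete object is $p_N(y):=(\phi_N(y)-\tfrac{1}{2}|y|^2)/\eps_N$, where $\phi_N$ is the Brenier potential of $T_N:\sigma_N\to\mu_N$, so that $\nabla p_N(y)=(T_N(y)-y)/\eps_N$ on the support of $\sigma_N$; its $L_i$-cell average equals $-\xi_N(\beta_i)$. Here hypothesis \eqref{eq:cm-boundpres} becomes decisive: combined with the triangle inequality it yields
\[
\int_0^T\|\nabla p_N\|_{L^2(\sigma_N)}^2\dd t=\int_0^T\eps_N^{-2}W_2^2(\sigma_N,\mu_N)\dd t\leq C,
\]
the uniform $L^2$-bound on the pressure gradient that energy dissipation alone misses (which only delivers $\|\nabla p_N\|_{L^2(\sigma_N)}^2\leq C/\eps_N$). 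The KKT conditions for the projection $\sigma_N=\Pi_K(\mu_N)$, in the spirit of the characterization underlying \cite{DePhilippis2016}, furnish $p_N\geq 0$ and $p_N(1-\sigma_N)=0$ on $\Omega$. Weak compactness of $p_N$ and stability of these pointwise conditions along $\sigma_N\to\rho$ then produce a scalar $p\geq 0$ with $p(1-\rho)=0$, whose gradient is identified with $-\xi$ through the cell-average formula above. The technical core of the theorem is precisely this last step: extracting a scalar pressure carrying both the sign and the complementarity conditions from the vector-valued weak limit, which forces one to exploit the convex-analytic structure of the $W_2$-projection onto $K$ rather than merely the $W_2$-convergence of $\sigma_N$.
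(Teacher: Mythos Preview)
Your skeleton matches the paper's: energy dissipation gives equicontinuity and Arzelà--Ascoli, $\dist_K(\mu_N)\to 0$ gives $\rho(t)\in K$, and hypothesis \eqref{eq:cm-boundpres} is what upgrades the naive bound $\|\nabla p_N\|^2_{L^2_tL^2(\sigma_N)}\leq C/\eps_N$ to a uniform one. A minor omission: weak compactness of $p_N$ itself (not just of $\nabla p_N$) requires a Poincaré inequality, which the paper gets from $|\{p_N=0\}|\geq|\Omega|-1>0$, since $p_N$ vanishes wherever $\sigma_N<1$.

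The genuine gap is the passage from $p_N(1-\sigma_N)=0$ to $p(1-\rho)=0$. You write ``weak compactness of $p_N$ and stability of these pointwise conditions along $\sigma_N\to\rho$ then produce \dots'', but this is exactly what does \emph{not} follow from weak convergence alone: $p_N$ and $1-\sigma_N$ converge only weakly (in $L^2_tH^1_x$ and $L^\infty$-weak-$*$), and the product of weak limits is not the weak limit of the product. Your phrase ``convex-analytic structure of the $W_2$-projection'' does not indicate a mechanism, and a Minty-type argument does not close here either (it reduces to $\int p_N\sigma_N\to\int p\rho$, which is the same obstruction). The paper's device is a time-averaging plus an $H^1$--$W_2$ estimate: for fixed $t_0<t_1$ set $p_N^{t_0,t_1}(x)=\frac{1}{t_1-t_0}\int_{t_0}^{t_1}p_N(t,x)\,\dd t$, bounded in $H^1(\Omega)$ uniformly in $N$ and hence \emph{strongly} convergent in $L^2(\Omega)$. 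One splits $0=\frac{1}{t_1-t_0}\int_{t_0}^{t_1}\!\int p_N(t)\bigl(1-\sigma_N(t)\bigr)\dd t$ into $\int p_N^{t_0,t_1}\bigl(1-\sigma_N(t_0)\bigr)$, which passes to the limit by strong--weak pairing, plus a remainder controlled via $\bigl|\int f\,\dd(\mu_1-\mu_0)\bigr|\leq\|\nabla f\|_{L^2}W_2(\mu_0,\mu_1)$ (valid for densities $\leq 1$) times a uniform modulus of continuity of $t\mapsto\sigma_N(t)$. Sending $N\to\infty$ and then $t_1\to t_0$ at a Lebesgue point yields $\int p(t_0)\bigl(1-\rho(t_0)\bigr)=0$. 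This double limit is the technical heart of the proof and is absent from your outline.

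A smaller point: your detour through $\xi_N$ living on $\mu_\beta^N$ is correct but less direct than the paper's route, which keeps the momentum $M_N$ on $\mu_N$, passes to a weak limit $M\ll\rho$ via lower semicontinuity of the Benamou--Brenier functional, and only afterwards identifies $M=-(\nabla V+\nabla p)\rho$ by testing against smooth vector fields and using $p(1-\rho)=0$ to convert $\int\nabla p\cdot\xi\,\dd x$ into $\int\nabla p\cdot\xi\,\dd\rho$.
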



\begin{proof}
For simplicity of notation, let us write $\beta_i^N(t)$ in place of
$\beta_i\bigl(X^N(t))\bigr)$.
Define the time dependent vector valued measures
\[
M_N(t) = \sum_{i=1}^N \dot x_i^N(t) \frac1N \delta_{x_i^N(t)} =
- \frac1N \sum_{i=1}^N \left[\nabla V\bigl(x_i^N(t)\bigr) + \frac1{\eps_N}
	\bigl(x_i^N(t) - \beta_i^N(t)\bigr)\right] \delta_{x_i^N(t)} .
\]
Define also the space-time measures $\mu_N\in\Meas_+([0,T]\times\Omega)$ and
$M_N\in\Meas([0,T]\times\Omega;\setR^d)$ given by
\[
\mu_N = \int_0^T \delta_t\otimes \mu_N(t) \d t , \qquad
M_N = \int_0^T \delta_t\otimes M_N(t) \d t.
\]
By construction they satisfy the equation 
\begin{equation}\label{eq:ce-N-t}
\partial_t \mu_N(t) + \div M_N(t) = 0,
\end{equation}
in a weak sense
because, for any $\zeta\in C^1_c(\setR^d)$, one has
\begin{equation*}
\begin{split}
\frac{\d}{\d t} \int_{\setR^d} \zeta \d\mu_N(t)
&= \frac{\d}{\d t} \left( \frac1N\sum_{i=1}^N \zeta\bigl(x_i^N(t)\bigr) \right)
= \frac1N\sum_{i=1}^N \nabla\zeta\bigl(x_i^N(t)\bigr) \cdot \dot x_i^N(t) \\
&= \int_{\setR^d} \nabla\zeta \cdot \d M_N(t) 
\end{split}
\end{equation*}
This also means that $(\mu_N,M_N)$ solve the continuity equation
\begin{equation}\label{eq:ce-N}
\partial_t \mu_N + \div M_N = 0
\end{equation}
in distributional sense.

The first step is showing that the measures $\mu_N$ admit a limit in some sense.
This is a consequence of the energy estimate
\begin{equation}\label{eq:M_N-energy-estimate}
\begin{split}
\int_0^T \int_\Omega \abs*{\frac{\d M_N(t)}{\d\mu_N(t)}}^2\d\mu_N(t) \dd t
&= \int_0^T \frac1N \sum_{i=1}^N \abs{\dot x_i^N(t)}^2 \d t \\
&= \int_0^T -\nabla\E^N\bigl(X^N(t)\bigr)\cdot \dot X^N(t)\d t \\
&= \int_0^T -\frac{\d}{\d t} \E^N\bigl(X^N(t)\bigr) \d t \\
&= \E^N\bigl(X^N(0)\bigr) - \E^N\bigl(X^N(T)\bigr)
\leq \E^N\bigl(X^N(0)\bigr) \leq C ,
\end{split}
\end{equation}
because then the Benamou-Brenier formula for the $W_2$ distance
\begin{equation}\label{eq:H1}
\begin{split}
W_2^2\bigl(\mu_N(t_0),\mu_N(t_1)\bigr) &\leq
\dashint_{t_0}^{t_1} \int_\Omega \abs*{(t_1-t_0)\frac{\d M_N(t)}{\d \mu_N(t)}}^2
	\d\mu_N(t) \d t \\
&\leq \left( \int_0^T \int_\Omega \abs*{\frac{\d M_N(t)}{\d\mu_N(t)}}^2
	\d\mu_N(t) \d t \right) \abs{t_1-t_0}
\end{split}
\end{equation}
shows that the functions $[0,T]\to\bigl(\Prob(\Omega),W_2\bigr):t\mapsto\mu_N(t)$
are equi-continuous, since they are all $1/2$-H\"older with the same constant.
Ascoli-Arzelà then ensures that $\mu_N\to\rho$ in $C([0,T],W_2(\Omega))$,
up to a subsequence.
In particular, $\mu_N\weakto\rho$ in $\Meas_+([0,T]\times\Omega)$.

Next, we show that also the family of measures $M_N$ admits a limit. Indeed,
\begin{equation*}
\begin{split}
\normtv{M_N} &= \int_0^T \normtv{M_N(t)}\d t
= \int_0^T \frac1N \sum_{i=1}^N \abs{\dot x_i^N(t)} \d t
= \frac1N \sum_{i=1}^N \int_0^T \abs{\dot x_i^N(t)} \d t \\
&\leq \frac1N \sum_{i=1}^N \sqrt T
	\left(\int_0^T \abs{\dot x_i^N(t)}^2 \d t \right)^{1/2} \\
&\leq \sqrt T \left(\frac1N \sum_{i=1}^N
	\int_0^T \abs{\dot x_i^N(t)}^2 \d t \right)^{1/2} \\
&\leq \sqrt{T \E^N\bigl(X^N(0)\bigr)} \leq \sqrt{TC} ,
\end{split}
\end{equation*}
and by compactness in the space of measures, they admit a weak limit
$M_N\weakto M$ in $\Meas([0,T]\times\Omega;\setR^d)$.

In particular, the weak convergence of $\mu_N$ and $M_N$ is sufficient
to pass to the limit \eqref{eq:ce-N} and infer that
\[
\partial_t\rho + \div M = 0.
\]
To show that $M\ll \rho$, we will use a few
properties of the Benamou-Brenier functional
$\mathcal{B}_2: \mathcal{M}([0,T]\times \Omega) \times
\mathcal{M}([0,T]\times \Omega,\Rsp^d) \to\Rsp\cup\{+\infty\}$. These
properties are found in Proposition~5.18 of \cite{otam}: (i) If
$M\ll \mu$, $\mathcal{B}_2(\mu,M) = \int_{[0,T]\times \Omega}
\abs*{\frac{\d M}{\d\mu}}^2\d\mu$.  (ii) The functional
$\mathcal{B}_2$ is lower semi-continuous wrt narrow convergence.
(iii) $\mathcal{B}_2(\mu,M) < +\infty$ only if $\mu\geq 0$ and $M\ll
\mu$.
In our case, using the fact that $M_N \ll \mu_N$ and previous computations, we get that
\[
\mathcal{B}_2(\mu_N,M_N) = \int_0^T \int_\Omega \abs*{\frac{\d M_N(t)}{\d\mu_N(t)}}^2\d\mu_N(t)\dd t
\]
is uniformly bounded. Then, by lower semi-continuity,
$\mathcal{B}_2(\rho,M)$ is finite. This implies, by the third property
of $\mathcal{B}_2$ that $M \ll \rho$.



Let now $\sigma_N$ be the projection of $\mu_N$ on $K$ and let $T_N:\Omega\to\setR^d$
be the optimal transport map $(T_N)_\#\sigma_N=\mu_N$.
Notice that
\begin{equation*}
\begin{split}
\frac1{2\eps_N} W_2^2\bigl(\mu_N(t),\sigma_N(t)\bigr)
&= \frac{1}{2\eps_N} \min_{\sigma\in K} W_2^2\bigl(\mu_N(t), \sigma\bigr)
\leq \E_{\eps_N}\bigl(\mu_N(t)\bigr) \leq \E_{\eps_N}\bigl(\mu_N(0)\bigr) \\
&\leq \int_\Omega V\d\mu_N(0) + \frac1{2\eps_N} W_2^2(\rho_0,\mu_N(0)) \\
&\leq \int_\Omega V\d\rho_0 + \Lip(V)W_2(\rho_0,\mu_N(0))
	+ \frac1{2\eps_N} W_2^2(\rho_0,\mu_N(0))
\leq C,
\end{split}
\end{equation*}
therefore $\sigma_N(t)$ converges to the same limit $\rho(t)$ as
$\mu_N(t)$. We used that $\rho_0\in K$ so that $\dd_K(\rho_0) = 0$.
In particular this means that $\rho(t)\in K$ for all $t$.

For $\xi\in C^1(\setR^d;\setR^d)$ and omitting time dependence for brevity, we have
\begin{equation}\label{eq:MNprop}
\int_{\setR^d} \xi\cdot\d M_N =
	- \int_\Omega \left( \nabla V(T_N) + \frac{T_N-\Id}{\eps_N} \right)
	\cdot \xi(T_N) \d\sigma_N .
\end{equation}

By Brenier's Theorem and the particular structure of the optimal partial transport
problem, we have that
\[
T_N = \Id - \nabla\phi_N
\]
where $\phi_N:\Omega\to\setR$ is a semi-concave function satisfying
$\phi_N\leq0$ and $(1-\sigma_N)\phi_N=0$; see \cite[Lemma 3.1]{gf-cm}
or Proposition~\ref{prop:derivative-crowd} of this paper. If we introduce the
pressure field
\[
p_N = -\frac{\phi_N}{\eps_N} \geq 0
\]
we have that
\[
\frac{T_N-\Id}{\eps_N} = \nabla p_N, \qquad (1-\sigma_N)p_N=0.
\]

We must show that $p_N\weakto p$ in $L^2\bigl([0,T]; H^1(\Omega)\bigr)$ to some admissible pressure field. This follows from the equi-boundedness
\[
\int_0^T\int_\Omega \abs{\nabla p_N}^2 \d x \d t \leq C < \infty,
\]
together with a Poincar\'e inequality based on the fact that $|\{p_N=0\}|\geq |\Omega|-1$. This allows to transform $L^2$ bounds on the gradients into full $H^1$ bounds. We now prove the aforementioned equi-boundedness. For every
$t\in[0,T]$, which we omit for brevity of notation, we have that
\begin{equation}\label{eq:w2decomp}
\begin{split}
W_2^2\oleft(\mu_N,\sigma_N\right)
&= \sum_{i=1}^N \int_{L_i} \abs{y-x_i}^2 \d\sigma_N(y)
= \sum_{i=1}^N \int_{L_i} \abs{y-\beta_i+\beta_i-x_i}^2 \d\sigma_N(y) \\
&= \sum_{i=1}^N \int_{L_i} \abs{y-\beta_i}^2 \d\sigma_N(y)
	+ \sum_{i=1}^N \frac1N \abs{\beta_i-x_i}^2
	+ 2\sum_{i=1}^N \int_{L_i} \scal{y-\beta_i}{\beta_i-x_i} \d\sigma_N(y) \\
&\leq W_2^2\oleft(\sigma_N,\frac1N\sum_{i=1}^N\delta_{\beta_i}\right)
	+ \frac1N \sum_{i=1}^N \abs*{x_i-\beta_i}^2
\end{split}
\end{equation}
where in the last step we use that $\int_{L_i} (y-\beta_i) \d\sigma_N(y) = 0$.

The first term can be treated with the bound given by Assumption \eqref{eq:cm-boundpres}. 
For the second term, notice that
\begin{equation}\label{estimate from x'}
\begin{split}
\frac1{\eps_N^2} \int_0^T \frac1N \sum_{i=1}^N \abs*{x_i-\beta_i}^2 \dd t 
&= \int_0^T \frac1N \sum_{i=1}^N \abs*{\frac{x_i-\beta_i}{\eps_N}}^2 \d t \\
&= \int_0^T \frac1N \sum_{i=1}^N \abs{\dot x_i^N+\nabla V(X_i^N)}^2 \d t \\
&\leq 2\int_0^T \frac1N \sum_{i=1}^N
	\bigl(\abs{\dot x_i^N}^2 + \abs{\nabla V(X_i^N)}^2\bigr) \d t \\
&\leq 2 C + 2 \Lip(V)^2T
\end{split}
\end{equation}
by \eqref{eq:M_N-energy-estimate}.
In conclusion,
\begin{equation*}
\begin{split}
\int_0^T\int_\Omega \abs{\nabla p_N}^2 \d x \d t
&= \frac1{\eps_N^2} \int_0^T\int_\Omega \abs{\nabla \phi_N}^2 \d x \d t \\
&= \frac1{\eps_N^2} \int_0^T W_2^2\oleft(\mu_N,\sigma_N\right) \d t
\leq CT + 2C + 2\Lip(V)^2T.
\end{split}
\end{equation*}

The next step is to show that $p(1-\rho)=0$.
The difficulty is that both $\sigma_N$ and $p_N$ are converging weakly,
which is not sufficient in order to pass to the limit the nonlinear relation
$p_N(1-\sigma_N)=0$.

For $0\leq t_0<t_1\leq T$, let us introduce the average pressure
\[
p_N^{t_0,t_1}(x) = \dashint_{t_0}^{t_1} p_N(t,x) \d t
= \frac1{t_1-t_0} \int_{t_0}^{t_1} p_N(t,x) \d t .
\]
Define also the measures $\lambda_N\in\Meas_+([0,T])$ given by
\[
\lambda_N = \norm{\nabla p_N(t)}_{L^2(\Omega)} \cdot \leb^1
= \left(\int_\Omega \abs{\nabla p_N(t)}^2 \d x \right) \cdot \leb^1.
\]
Their total masses
\[
\abs{\lambda_N}([0,T]) = \int_0^T \int_\Omega \abs{\nabla p_N}^2 \d x \d t = \int_0^T\int_\Omega \abs{\nabla p_N}^2 \d\sigma_N \d t
\]
are uniformly bounded, as previously shown; therefore, up to a
subsequence, they converge weakly to a measure
$\lambda\in\Meas_+([0,T])$. Note that the second equality in the
previous formula uses that $p_N(1-\sigma_N)=0$.

For every $N$, we split in two pieces the following identity:
\begin{equation*}
\begin{split}
0 &= \dashint_{t_0}^{t_1} \int_\Omega p_N(t) \d(1-\sigma_N(t)) \d t \\
&= \dashint_{t_0}^{t_1} \int_\Omega p_N(t) \d(1-\sigma_N(t_0)) \d t
	+ \dashint_{t_0}^{t_1} \int_\Omega p_N(t) \d(\sigma_N(t_0)-\sigma_N(t)) \d t \\
&= \int_\Omega p_N^{t_0,t_1} \d(1-\sigma_N(t_0))
	+ \dashint_{t_0}^{t_1} \int_\Omega p_N(t) \d(\sigma_N(t_0)-\sigma_N(t)) \d t .
\end{split}
\end{equation*}
In order to deal with the first integral, we observe that we have strong convergence $p_N^{t_0,t_1}\xrightarrow{L^2(\Omega)}p^{t_0,t_1}$ since $p_N^{t_0,t_1}$ is bounded in $H^1$ (as it is obtained as an average, for fixed $t_0,t_1$ of a time-dependent function on which we have $L^2_t H^1_x$ bounds), and that we have
$\sigma_N(t_0)\weakto\rho(t_0)$ as $N\to\infty$ (this convergence is a weak convergence of measures, but it is also weak in $L^2$ because of the $L^\infty$ bounds on the densities). Hence, the first integral converges to
\[
\lim_{N\to\infty} \int_\Omega p_N^{t_0,t_1} \d(1-\sigma_N(t_0)) =
\int_\Omega p^{t_0,t_1} \d(1-\rho(t_0)) .
\]
At any Lebesgue point $t_0$ of the map $[0,T]\to L^2(\Omega):t\mapsto p(t)$ we have
$p^{t_0,t_1}\xrightarrow[t_1\to t_0]{} p(t_0)$, hence
\[
\lim_{t_1\to t_0} \int_\Omega p^{t_0,t_1} \d(1-\rho(t_0))
= \int_\Omega p(t_0) \d(1-\rho(t_0)) .
\]


Employing \autoref{lem:H1-W2-inequality}, the second integral can be estimated as
\begin{equation*}
\begin{split}
\hspace{3cm}&\hspace{-3cm}
\abs*{\dashint_{t_0}^{t_1} \int_\Omega p_N(t) \d(\sigma_N(t_0)-\sigma_N(t)) \d t}
\leq \dashint_{t_0}^{t_1} \norm{\nabla p_N(t)}_{L^2(\Omega)}
	W_2\bigl(\sigma_N(t_0), \sigma_N(t)\bigr) \d t \\
&\leq \omega(t_1-t_0) \dashint_{t_0}^{t_1}
	\norm{\nabla p_N(t)}_{L^2(\Omega)} \d t \\
&\leq \omega(t_1-t_0)  \left( \dashint_{t_0}^{t_1}
	\norm{\nabla p_N(t)}_{L^2(\Omega)}^2 \d t \right)^{1/2} \\
&= \omega(t_1-t_0) \sqrt{\frac{\lambda_N([t_0,t_1])}{t_1-t_0}}, 
\end{split}
\end{equation*}
where $\omega$ is a continuity modulus for the curve $t\mapsto \sigma_N(t)$ in the Wasserstein space $W_2$. Note that the continuity of the curve $t\mapsto \sigma_N(t)$ comes from the continuity of $t\mapsto \mu_N(t)$ (a consequence of  \eqref{eq:H1}) and the continuity of the projection operator $\mu \mapsto \arg\min_\sigma \Wass_2^2(\mu,\sigma)$; see, for instance, \cite{DePhilippis2016}).

When $N\to\infty$, for almost every $t_0$ and $t_1$ we have
\[
\lim_{N\to\infty} \sqrt{\frac{\lambda_N([t_0,t_1])}{t_1-t_0}} \leq \sqrt{\frac{\lambda([t_0,t_1])}{t_1-t_0}},
\]
which tends to a finite constant when $t_1\to t_0$ for a.e. $t_0$. This latter fact  comes from differentiation of measures: this quantity represents the density of the measure $\lambda$ w.r.t. the Lebesgue measure on the real line, and the limit exists and is equal to the density of the absolutely continuous part of $\lambda$ a.e. If we also consider the factor $\omega(t_1-t_0) $, we see that the second integral goes to $0$ for almost every $t_0$ when taking the limits $N\to\infty$ and $t_1\to t_0$, in this order.

Summing up, we have shown that
\[
\int_\Omega p(t_0) \d(1-\rho(t_0)) =
\lim_{t_1\to t_0} \lim_{N\to\infty} \dashint_{t_0}^{t_1}\int_\Omega p_N(t_0) \d(1-\sigma_N(t_0))\dd t = 0
\]
for almost every $t_0\in[0,T]$, which proves $p(1-\rho)=0$, by the positivity of $p$.

We can finally show that $M=(-\nabla V - \nabla p)\rho$. Fix $\xi\in C^1([0,T]\times\Omega;\setR^d)$. By \eqref{eq:MNprop}, we know that
\[
\int_0^T\int_{\Omega} \xi\cdot\d M_N =
	- \int_0^T\int_\Omega \left( \nabla V(T_N) + \nabla p_N \right)
	\cdot \xi(T_N) \d\sigma_N \d t.
\]
The first term passes to the limit because
\[
\int_0^T \int_\Omega \nabla V(T_N) \cdot \xi(T_N) \d\sigma_N  = \int_0^T \int_\Omega \nabla V \cdot \xi \d\mu_N \to
\int_0^T \int_\Omega \nabla V \cdot \xi \d\rho.
\]
For the second term,
\begin{equation*}
\begin{split}
\abs*{\int_0^T\int_\Omega \nabla p_N \cdot [\xi(T_N)-\xi] \d\sigma_N \d t}
&\leq \left(\int_0^T\int_\Omega \abs{\nabla p_N}^2 \d\sigma_N \d t\right)^{1/2} \\
&\hspace{2cm} \cdot\left(\int_0^T\int_\Omega \abs{\xi(T_N)-\xi}^2 \d\sigma_N \d t\right)^{1/2} \\
&\leq \left(\int_0^T\int_\Omega \abs{\nabla p_N}^2 \d\sigma_N \d t\right)^{1/2}
	\Lip(\xi) W_2(\mu_N,\sigma_N) \to 0,
\end{split}
\end{equation*}
therefore
\begin{equation*}
\begin{split}
\lim_{N\to\infty} \int_0^T\int_\Omega \nabla p_N \cdot \xi(T_N) \d\sigma_N \d t
&= \lim_{N\to\infty} \int_0^T\int_\Omega \nabla p_N \cdot \xi \d\sigma_N \d t \\
&= \lim_{N\to\infty} \int_0^T\int_\Omega \nabla p_N \cdot \xi \d x \d t \\
&= \int_0^T\int_\Omega \nabla p \cdot \xi \d x \d t \\
&= \int_0^T\int_\Omega \nabla p \cdot \xi \d \rho \d t . \qedhere
\end{split}
\end{equation*}
\end{proof}

The following lemma is borrowed from \cite[Lemma 3.5]{gf-cm} (but was first presented in other papers, such as \cite{Loeper}).

\begin{lemma}\label{lem:H1-W2-inequality}
Let $\mu_0,\mu_1\in\Prob(\Omega)$ be probability measures with densities
bounded by $1$. Then for all $f\in H^1(\Omega)$ we have that
\[
\abs*{\int_\Omega f \d(\mu_1-\mu_0)} \leq
\norm{\nabla f}_{L^2(\Omega)} W_2(\mu_0,\mu_1) .
\]
\end{lemma}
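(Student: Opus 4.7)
The natural strategy is to use McCann's displacement interpolation between $\mu_0$ and $\mu_1$ and then combine a Benamou--Brenier type computation with the fact that the $L^\infty$ bound is preserved along Wasserstein geodesics. First I would let $T$ denote the optimal transport map from $\mu_0$ to $\mu_1$ (which exists by Brenier's theorem, since $\mu_0\ll\leb$) and set $T_t = (1-t)\Id + tT$, $\mu_t = (T_t)_\#\mu_0$ for $t\in[0,1]$. The associated velocity field $v_t = (T-\Id)\circ T_t^{-1}$ satisfies the continuity equation $\partial_t\mu_t + \div(\mu_t v_t)=0$ and $\int_0^1\int_\Omega |v_t|^2\d\mu_t\d t = W_2^2(\mu_0,\mu_1)$.

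The crucial input is that $\|\mu_t\|_{L^\infty} \leq \max(\|\mu_0\|_{L^\infty},\|\mu_1\|_{L^\infty}) \leq 1$ for every $t\in[0,1]$; this is the displacement convexity of the $L^\infty$ norm, a classical consequence of McCann's interpolation theorem in $\setR^d$. Using this bound, for any smooth $f$,
\[
\int_\Omega f\d(\mu_1-\mu_0) = \int_0^1 \frac{\d}{\d t}\int_\Omega f\d\mu_t\d t = \int_0^1\int_\Omega \nabla f\cdot v_t\d\mu_t\d t,
\]
and two applications of Cauchy--Schwarz give
\[
\abs*{\int_\Omega f\d(\mu_1-\mu_0)} \leq \left(\int_0^1\int_\Omega |\nabla f|^2\d\mu_t\d t\right)^{1/2}\!\!\left(\int_0^1\int_\Omega |v_t|^2\d\mu_t\d t\right)^{1/2}.
\]
The $L^\infty$ bound on $\mu_t$ yields $\int_\Omega |\nabla f|^2\d\mu_t \leq \|\nabla f\|_{L^2(\Omega)}^2$, so the first factor is bounded by $\|\nabla f\|_{L^2(\Omega)}$ and the second factor equals $W_2(\mu_0,\mu_1)$, which gives the claimed inequality for smooth $f$.

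To extend from smooth $f$ to $f\in H^1(\Omega)$, I would approximate $f$ by smooth functions in $H^1(\Omega)$ (possibly after extending to a larger domain, using that $\Omega$ is assumed compact) and pass to the limit: the left-hand side is continuous in $f$ with respect to the $H^1$ topology since $\mu_0,\mu_1$ are bounded (hence $H^1(\Omega)\hookrightarrow L^2(\Omega)\hookrightarrow L^1(\mu_i)$), and the right-hand side is clearly continuous. The only delicate step in this plan is the displacement convexity of the $L^\infty$ bound; this is where the hypothesis $\mu_i\leq 1$ is used in an essential way, and is the reason the inequality fails without that assumption. Since this is a well-known fact (and the statement is directly quoted from \cite[Lemma 3.5]{gf-cm}), the argument can essentially be cited rather than reproved.
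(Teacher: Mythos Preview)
The paper does not give its own proof of this lemma; it simply records it as ``borrowed from \cite[Lemma 3.5]{gf-cm}'' (and earlier \cite{Loeper}). Your argument via displacement interpolation, the continuity equation, Cauchy--Schwarz, and the geodesic convexity of the constraint $\{\rho\leq 1\}$ is precisely the standard proof found in those references, so there is nothing to compare and your proposal is correct.
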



\begin{remark}
The convergence result can be generalized to handle PDEs involving
other terms such as self-interaction involving a $\mathcal{C}^{1,1}$ kernel
$W$:
\[
\begin{cases}
\partial_t \rho + \div(\rho v) = 0, & \rho\in K, \\
v = -\nabla V - \nabla W*\rho - w, & w\in N_\rho K,
\end{cases}
\]
which can be regarded as the gradient flow, in $(\Prob(\Omega), \Wass_2)$ of the energy
\[
\E(\rho) = \begin{cases}
\int_\Omega V(x) \d\rho(x) + \frac 12\int_\Omega\int_\Omega W(x-y) \d\rho(x)\d\rho(y)
&\hbox{if } \rho \in K, \\
\infty &\text{otherwise},
\end{cases}
\]
provided the Kernel $W$ is even. 
In this case, the discretized system becomes
\[
\begin{cases}
\displaystyle
\dot x_i^N(t) = -\nabla V\bigl(x_i^N(t)\bigr)
	- \frac1N \sum_{j} \nabla W(x_i-x_j)
	+ \frac1{\eps_N} \bigl[\beta_i\bigl(X^N(t)\bigr) - x_i^N(t)\bigr], \\
X^N(0) = X^N_0.
\end{cases}
\]


The only difference with respect to Theorem \ref{thm:convergence-crowd-motion} will the presence of a modified velocity field $\nabla V_N$ instead of $\nabla V$, with $V_N=V+W*\mu_N$. In the proof of Theorem \ref{thm:convergence-crowd-motion} we used the weak convergence of $\mu_N$ to handle the term $\int \nabla V\cdot \xi d\mu_N$; we will now also need the uniform convergence $\nabla V_N\to \nabla (V+W*\rho)$ (which is a consequence of the regularity assumption on $W$) to handle the same term. Also note that in the definition of the flow one can omit the term $\nabla W (x_i-x_j)$ for $i=j$, as it is usually done, since anyway $\nabla W(0)=0$.
\end{remark}

\begin{theorem}[Convergence of the discrete scheme in 1D]\label{thm:convergence-crowd-motion-1d}
Let $\Omega\subset\setR$ be an interval. For every $N\in\setN$, let $\eps_N=1/N$ and $\mu_N(0)\in\Prob_N(\setR)$
be such that
\[
\frac1{\eps_N} W_2^2(\rho_0, \mu_N(0)) \leq C, 
\]
Let $X_N\in C^1([0,T], \setR^N)$ be a solution of \eqref{eq:cm-ode} or,
equivalently, \eqref{eq:cm-ode-expl} and let $\mu_N:[0,T]\to\Prob_N(\setR)$
be the corresponding curve of measures.
Then, as $N\to\infty$, and up to subsequences,  $\mu_N\to\rho$ in $C^0\bigl([0,T]; W_2(\setR)\bigr)$,
where $\rho$ is a weak solution to
\eqref{eq:cm-pde-press}.
\end{theorem}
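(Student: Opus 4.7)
My approach is to derive the 1D convergence result as a corollary of Theorem~\ref{thm:convergence-crowd-motion}: I would show that assumption \eqref{eq:cm-boundpres} is automatically satisfied in 1D with $\eps_N = 1/N$, so that no separate convergence argument is needed.

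The first and central step is to describe the partial-transport projection in dimension one. I would prove that for $\mu_N = \frac{1}{N}\sum_i \delta_{x_i}$ with ordered atoms, the projection $\sigma_N$ onto $K$ is the indicator $\mathbf{1}_{E_N}$ of a set $E_N$ with $|E_N| = 1$, and the cells $L_i = T_N^{-1}(x_i)$ are intervals of Lebesgue measure exactly $1/N$ tiling $E_N$. Saturation $\sigma_N\in\{0,1\}$ would be extracted from $\phi_N \leq 0$ and $(1-\sigma_N)\phi_N = 0$ (recalled after Theorem~\ref{thm:convergence-crowd-motion}): in 1D the complementary slackness rules out intermediate densities. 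The interval structure of the $L_i$'s then follows from monotonicity of the Brenier map in dimension one.

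Once this structure is in place, the key estimate is a direct computation. Each cell $L_i$ is an interval of length $1/N$ on which $\sigma_N = 1$, so the barycenter $\beta_i$ is its midpoint and $\int_{L_i} (y - \beta_i)^2 \d\sigma_N(y) = |L_i|^3/12 = 1/(12 N^3)$. Summing over $i$ and coupling each portion of $\sigma_N$ on $L_i$ with $\frac{1}{N}\delta_{\beta_i}$ yields
\[
W_2^2\!\left(\sigma_N, \frac{1}{N}\sum_{i=1}^N \delta_{\beta_i}\right) \leq \frac{1}{12 N^2}
\]
uniformly in time. Multiplying by $1/\eps_N^2 = N^2$ and integrating over $[0,T]$ bounds the left-hand side of \eqref{eq:cm-boundpres} by $T/12$, and Theorem~\ref{thm:convergence-crowd-motion} then delivers convergence to a weak solution of \eqref{eq:cm-pde-press}.

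The main obstacle will be the structural claim itself. Although saturation and monotonicity are classical features of 1D partial optimal transport, care is needed at the interface between $E_N$ and its complement, at the boundary of $\Omega$, and at particle collisions, to ensure that the cells are indeed $\sigma_N$-a.e.\ intervals of length $1/N$ rather than intervals only up to Lebesgue-null corrections; a precise reference to (or quick rederivation of) the 1D partial transport theory from \cite{gf-cm} would handle this. The rest is scaling: the choice $\eps_N = 1/N$ is critical precisely because it balances the $O(N^{-2})$ variance of the barycenter approximation with the prefactor $N^2$ in \eqref{eq:cm-boundpres}.
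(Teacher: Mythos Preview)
Your proposal is correct and follows essentially the same approach as the paper: the paper also derives Theorem~\ref{thm:convergence-crowd-motion-1d} as an immediate corollary of Theorem~\ref{thm:convergence-crowd-motion} together with Proposition~\ref{prop:pressure-bound-crowd-motion}, whose proof is precisely your second-moment computation on intervals of length $1/N$ with $\sigma_N=1$, giving the same $1/(12N^2)$ bound. The structural claim you flag as the main obstacle is stated without further justification in the paper's proof of Proposition~\ref{prop:pressure-bound-crowd-motion}, but your pointer to Proposition~\ref{prop:derivative-crowd} (equivalently \cite[Lemma~3.1]{gf-cm}) is the right way to substantiate it.
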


\begin{proof}
This is an immediate consequence of \autoref{thm:convergence-crowd-motion} and \autoref{prop:pressure-bound-crowd-motion}, which allows to verify the assumption
\[
\frac1{\eps_N^2} \int_0^T W_2^2\oleft(\sigma_N,\frac1N\sum_{i=1}^N\delta_{\beta_i}\right) \d t
\leq C. \qedhere
\]
\end{proof}

\section{Lagrangian discretization of linear diffusion}

The previously presented Lagrangian scheme can be adapted to solve also the
advection-diffusion equation
\begin{equation}\label{eq:adv-diff-pde}
\begin{cases}
\partial_t \rho + \div(\rho v) = 0, & \\
v = -\nabla V - \nabla \log\rho, & 
\end{cases}
\end{equation}
on a bounded domain $\Omega$, with no-flux boundary conditions.
This equation arises as the gradient flow with respect to $W_2$ of the energy
\[
\E(\rho) = \int V\dd \rho + H(\rho) \hbox{ where } H(\rho) = \begin{cases}
\int_\Omega \log\rho\d\rho
	& \rho \ll \leb^d\res\Omega, \\
\infty & \text{otherwise},
\end{cases}
\]
We adopt the same Lagrangian discretization as before.  For the atomic
measures $\mu_N$, the entropy (the second term in the energy) is
identically $+\infty$, therefore we need to  substitute it with a
similar functional, in the same manner that we replaced the hard
constraint $\rho\in K$ with a penalization. To this end, we consider
its Moreau-Yosida regularization
\begin{equation}\label{eq:Heps}
H_\eps(\rho) = \min_{\sigma\in\Prob(\Omega)} \frac1{2\eps} W_2^2(\rho,\sigma) + H(\sigma).
\end{equation}
and the new energy becomes $E_\eps(\rho) = \int_\Omega V\d\rho +
H_\eps(\rho).$ Letting $F_\eps(x_1,\hdots,x_N) = H_\eps(\mu_X)$, the
discrete measure $\mu_N(t)$ represented by the particles $X_N(t)$ can
then evolve according to the system of ODE as before, namely
\begin{equation}\label{eq:adv-diff-ode}
\begin{cases}
\frac{1}{N} \dot{x}_i(t) = - \frac{1}{N} \nabla V(x_i) - \nabla_{x_i} F_\eps(X), \\
X^N(0) = X^N_0.
\end{cases}
\end{equation}

\begin{theorem}[Convergence of the discrete scheme]\label{thm:conv-diffusion}
For every $N\in\setN$, let $\eps_N\in(0,\infty)$ and $\mu_N(0)\in\Prob_N(\Rsp^d)$ such that 
\[
\frac1{\eps_N} W_2^2(\rho_0, \mu_N(0)) \leq C, \qquad
\lim_{N\to\infty}\eps_N = 0.
\]
Let $X_N\in C^1([0,T], \setR^{Nd})$ be a solution of \eqref{eq:adv-diff-ode}
and let $\mu_N:[0,T]\to\Prob_N(\setR^d)$ be the corresponding curve of measures.
Assume that
\begin{equation}\label{eq:assump-diff}
\frac1{\eps_N} \int_0^T W_2^2\oleft(\sigma_N,\frac1N\sum_{i=1}^N\delta_{\beta_i}\right) \d t \to 0,
\end{equation}
where $\sigma_N$ is the minimizer in the definition of $H_\eps(\mu_N)$ (see \eqref{eq:Heps}).
Then, as $N\to\infty$, and up to subsequences,  $\mu_N\to\rho$ in $C^0\bigl([0,T]; W_2(\setR^d)\bigr)$,
where $\rho$ is a weak solution to
\eqref{eq:adv-diff-pde}.
\end{theorem}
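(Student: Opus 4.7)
The plan is to follow the template of \autoref{thm:convergence-crowd-motion} closely, with the entropic quantity $\log\sigma_N$ playing the role that the pressure $p_N=-\phi_N/\eps_N$ plays in the crowd motion proof. The key ingredient is the Euler--Lagrange equation for the Moreau--Yosida minimizer $\sigma_N$ in \eqref{eq:Heps}: if $T_N$ denotes the optimal transport map from $\sigma_N$ onto $\mu_N$, then
\[
\frac{T_N-\Id}{\eps_N}=\nabla\log\sigma_N
\]
on $\mathrm{supp}(\sigma_N)$. Concretely, $\sigma_N$ is piecewise Gaussian, of variance of order $\eps_N$, on each Laguerre cell $L_i=T_N^{-1}(x_i)$.

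I would open the proof exactly as in the crowd motion case. Define the momentum measures $M_N(t) = \tfrac1N\sum_i\dot x_i^N(t)\,\delta_{x_i^N(t)}$, so that $(\mu_N,M_N)$ satisfies the continuity equation distributionally. From the gradient-flow ODE one derives the energy dissipation
\[
\int_0^T \frac1N\sum_i|\dot x_i^N|^2\dd t = \E_{\eps_N}(X^N(0))-\E_{\eps_N}(X^N(T)) \leq C,
\]
uniformly in $N$ because $H_{\eps_N}(\mu_N(0))\leq H(\rho_0)+\tfrac1{2\eps_N}W_2^2(\rho_0,\mu_N(0))$ is bounded (under the implicit assumption $H(\rho_0)<\infty$). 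Benamou--Brenier then gives equi-$1/2$-Hölder continuity of $t\mapsto\mu_N(t)$ in $W_2$, hence $\mu_N\to\rho$ in $C^0([0,T];W_2(\setR^d))$ up to subsequences. Cauchy--Schwarz on $M_N$ provides a total-variation bound and a weak limit $M$, lower semi-continuity of the Benamou--Brenier functional forces $M\ll\rho$, and the continuity equation passes to the limit. Moreover, from $\tfrac1{2\eps_N}W_2^2(\mu_N,\sigma_N)\leq H_{\eps_N}(\mu_N)-H(\sigma_N) \leq C + \log|\Omega|$ (using Jensen to bound $H\geq-\log|\Omega|$), I deduce $W_2(\mu_N(t),\sigma_N(t))=O(\sqrt{\eps_N})$, so $\sigma_N\to\rho$ in $C^0([0,T];W_2)$ as well.

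The heart of the matter is identifying $M$. Testing $M_N$ against $\xi\in C^1_c(\Omega;\setR^d)$ and using the ODE together with the EL identity gives, exactly as in the crowd motion case,
\[
\int\xi\cdot\dd M_N(t) = -\int\nabla V\cdot\xi\,\dd\mu_N(t) - \int\nabla\log\sigma_N(t)\cdot\xi(T_N)\,\dd\sigma_N(t).
\]
I would split $\xi(T_N) = \xi + (\xi(T_N)-\xi)$. The main piece simplifies after integration by parts to $-\int\nabla\sigma_N\cdot\xi\,\dd x = \int\sigma_N\div\xi\,\dd x$, which converges to $\int\div\xi\,\dd\rho$ by the weak convergence $\sigma_N\weakto\rho$. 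The remainder is controlled by Cauchy--Schwarz:
\[
\left|\int\nabla\log\sigma_N\cdot(\xi(T_N)-\xi)\dd\sigma_N\right| \leq \Lip(\xi)\int\frac{|T_N-\Id|^2}{\eps_N}\dd\sigma_N = \frac{\Lip(\xi)}{\eps_N}W_2^2(\mu_N,\sigma_N).
\]
After time integration, the decomposition \eqref{eq:w2decomp} shows that this vanishes: the first summand is handled by hypothesis \eqref{eq:assump-diff}, while the second contributes $\eps_N\cdot O(1)$ by \eqref{estimate from x'}. Collecting everything yields in the limit $\int\xi\cdot\dd M = -\int\nabla V\cdot\xi\,\dd\rho + \int\div\xi\,\dd\rho$, which is the weak form of $M=-(\nabla V+\nabla\log\rho)\rho$, as desired.

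The main technical obstacle I anticipate is making the Euler--Lagrange identity and the subsequent integration by parts fully rigorous: since $\sigma_N$ is only piecewise smooth (continuous across the Laguerre boundaries but with $\nabla\log\sigma_N$ possibly jumping), one must check that $\sigma_N\in W^{1,\infty}(\Omega)$ globally, which follows from the continuity of the Kantorovich potential $\phi_N$ and the explicit formula $\nabla\sigma_N(y)=\sigma_N(y)(x_i-y)/\eps_N$ on $L_i$. Beyond this, linear diffusion is actually somewhat \emph{easier} to handle than crowd motion: the delicate time-averaging argument that was needed to pass to the limit in the nonlinear constraint $p(1-\rho)=0$ has no counterpart here, because the nonlinearity $\nabla\log\rho\cdot\rho=\nabla\rho$ is linear in $\rho$, and weak convergence of $\sigma_N$ already suffices.
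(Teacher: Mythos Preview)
Your proposal is correct and follows essentially the same route as the paper's proof: the energy-dissipation/compactness argument, the Euler--Lagrange identity $(T_N-\Id)/\eps_N=\nabla\log\sigma_N$, the splitting $\xi(T_N)=\xi+(\xi(T_N)-\xi)$ with integration by parts on the main term, and the control of the remainder via \eqref{eq:w2decomp}, \eqref{estimate from x'} and hypothesis \eqref{eq:assump-diff} are all exactly what the paper does. Your observation that the nonlinear constraint $p(1-\rho)=0$ and its time-averaging argument have no analogue here, and your remark on the $W^{1,\infty}$ regularity of $\sigma_N$ needed for the integration by parts, are both accurate (the paper is in fact less explicit than you on these two points).
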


\begin{proof}
Define as before the vector measures
\[
M_N = \sum_{i=0}^N \dot x_i^N(t) \frac1N \delta_{x_i^N(t)}.
\]
Together with $\mu_N$, they solve the continuity equation
\[
\partial_t \mu_N(t) + \div M_N(t) = 0.
\]
Moreover,
\begin{equation*}
\begin{split}
\int_0^T \abs*{\frac{\d M_N(t)}{\d\mu_N(t)}}^2\d\mu_N(t)
&= \int_0^T \frac1N \sum_{i=1}^N \abs{\dot x_i^N(t)}^2 \d t \\
&= \int_0^T -\nabla\E^N\bigl(X^N(t)\bigr)\cdot \dot X^N(t)\d t \\
&= \int_0^T -\frac{\d}{\d t} \E^N\bigl(X^N(t)\bigr) \d t \\
&= \E^N\bigl(X^N(0)\bigr) - \E^N\bigl(X^N(T)\bigr)
\leq C,
\end{split}
\end{equation*}
where we used that the domain is bounded to see that the entropy is
bounded from below by some constant, implying
$\E^N\bigl(X^N(0)\bigr)\geq C$. This shows that the functions
$[0,T]\to\bigl(\Prob(\Omega),W_2\bigr):t\mapsto\mu_N(t)$ are
equi-continuous, because they are $1/2$-H\"older with the same
constant.  Ascoli-Arzelà then ensures that $\mu_N\weakto\rho$, up to a
subsequence.

The rest of the proof is similar to the previous one with the following modifications.

The measure $\sigma_N$ minimizing
\[
\min_{\sigma\in\Prob(\Omega)} \int_\Omega \log\sigma\d\sigma
	+ \frac1{2\eps} W_2^2(\mu_N,\sigma)
\]
satisfies
\[
\sigma_N = c_N e^{-\phi_N/(2\eps_N)} \leb^d\res\Omega.
\]
where $\phi_N$ is the optimal potential from $\sigma_N$ to $\mu_N$ and $c_N$ is a normalization constant. This optimality condition can be recovered from the first variation of the objective functional. This implies that $\leb^d$-almost everywhere on $\Omega$ the density $\sigma_N$ is strictly positive and
\[
\frac{T_N-\Id}{\eps_N} = -\frac{\nabla \phi_N}{2\eps_N} = \nabla\log\sigma_N
= \frac{\nabla\sigma_N}{\sigma_N}.
\]

Passing to the limit $M_N$ in order to get $M=-\nabla V\rho-\nabla\rho$ is now easier because
\[
\int_0^T \int_\Omega \nabla V \cdot \xi \d\mu_N \to
\int_0^T \int_\Omega \nabla V \cdot \xi \d\rho
\]
as before. Setting $p_N=\log\sigma_N$ (which is the term which plays a similar role to that of the pressure in the previous section), we have for any $\xi \in \mathcal{C}^0_c(\Omega)$,
\begin{equation*}
  \begin{split}
\lim_{N\to\infty} \int_0^T \int_\Omega \nabla p_N \cdot \xi(T_N) \d\sigma_N \d t
&= \lim_{N\to\infty} \int_0^T \int_\Omega \nabla p_N \cdot \xi \d\sigma_N \d t \\
&= \lim_{N\to\infty} \int_0^T \int_\Omega \nabla \sigma_N \cdot \xi \d x \d t \\
&= -\lim_{N\to\infty} \int_0^T \int_\Omega \sigma_N \div \xi \d x \d t\\
&= -\int_0^T \int_\Omega \rho \div(\xi) \d x \d t.
\end{split}
\end{equation*}
The first step in the above equation is justified because
\begin{equation*}
\begin{split}
\abs*{\int_0^T \int_\Omega \nabla p_N \cdot \bigl(\xi(T_N)-\xi\bigr) \d\sigma_N \d t}
&\leq \Lip(\xi)\ \int_0^T \int_\Omega \abs{\nabla p_N} \cdot \abs{T_N-\Id} \d\sigma_N \d t \\
&= \Lip(\xi) \frac1{\eps_N} \int_0^T \int_\Omega \abs{T_N-\Id}^2 \d\sigma_N \d t \\
&= \Lip(\xi) \frac1{\eps_N} \int_0^T W_2^2\oleft(\sigma_N,\mu_N\right) \d t
\to 0.
\end{split}
\end{equation*}
The last term tends to $0$ by writing, as in \eqref{eq:w2decomp},
$$W_2^2\oleft(\sigma_N,\mu_N\right) \leq W_2^2\oleft(\sigma_N,\frac1N\sum_{i=1}^N\delta_{\beta_i}\right)
	+  \frac1N \sum_{i=1}^N \abs*{x_i-\beta_i}^2 .$$
	The first term tends to $0$ by assumption, and the second term is $O(\eps_N)$ because of \eqref{estimate from x'}. \end{proof}

\begin{theorem}[Convergence of the discrete scheme in 1D]\label{thm:conv-diffusion-1d}
Let $\Omega\subset\setR$ be a bounded interval. For every $N\in\setN$, take a number $\eps_N>0$ and $\mu_N(0)\in\Prob_N(\setR)$ such that
\[
\frac1{\eps_N} W_2^2(\rho_0, \mu_N(0)) \leq C, \qquad
\lim_{N\to\infty}\eps_N = 0, \quad \lim_{N\to\infty}N^2\eps_N = +\infty.
\]
Let $X_N\in C^1([0,T], \setR^N)$ be a solution of \eqref{eq:adv-diff-ode}
and let $\mu_N:[0,T]\to\Prob_N(\setR)$ be the corresponding curve of measures.
Then, as $N\to\infty$, and up to subsequences, $\mu_N\to\rho$ in $C^0\bigl([0,T]; W_2(\setR)\bigr)$,
where $\rho$ is a weak solution to
\eqref{eq:adv-diff-pde}.
\end{theorem}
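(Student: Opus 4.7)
The plan is to reduce the statement to \autoref{thm:conv-diffusion} by verifying its hypothesis \eqref{eq:assump-diff} in the 1D setting, using the extra assumption $N^2\eps_N\to\infty$. All other assumptions are already present, so the task reduces to showing
\[
\frac{1}{\eps_N}\int_0^T W_2^2\Bigl(\sigma_N,\frac{1}{N}\sum_{i=1}^N\delta_{\beta_i}\Bigr)\d t \xrightarrow{N\to\infty} 0.
\]

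First I would exploit the one-dimensional structure. In 1D the Laguerre cells $L_i=T_N^{-1}(x_i)$ are intervals and partition $\Omega$ (up to negligible sets). From the optimality relation $(T_N-\mathrm{Id})/\eps_N = \nabla\log\sigma_N$ recalled in the proof of \autoref{thm:conv-diffusion}, a direct integration on $L_i$ gives $\log\sigma_N(y)=-(y-x_i)^2/(2\eps_N)+\mathrm{const}$, so $\sigma_N|_{L_i}$ is a truncated Gaussian of variance $\eps_N$ centered at $x_i$. Using the transport plan from $\sigma_N$ to $\frac{1}{N}\sum_i\delta_{\beta_i}$ which sends the mass of $\sigma_N|_{L_i}$ (of total mass $1/N$) onto its barycenter $\beta_i$, I obtain the pointwise-in-$t$ estimate
\[
W_2^2\Bigl(\sigma_N,\frac{1}{N}\sum_i\delta_{\beta_i}\Bigr) \leq \sum_i \int_{L_i}|y-\beta_i|^2 \d\sigma_N(y) = \frac{1}{N}\sum_i V_i,
\]
where $V_i$ is the variance of the normalized restriction of $\sigma_N$ to $L_i$. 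Two upper bounds hold: Popoviciu's inequality gives $V_i\leq |L_i|^2/4$, while the $(1/\eps_N)$-strong log-concavity of $\sigma_N|_{L_i}$ (via Brascamp--Lieb, or by direct calculation on a truncated Gaussian) gives $V_i\leq \eps_N$. Splitting the indices according to whether $|L_i|\leq 2\sqrt{\eps_N}$ and using $\sum_i|L_i|\leq|\Omega|$ yields
\[
\sum_{i=1}^N \min\bigl(\eps_N,|L_i|^2/4\bigr) \leq \tfrac12|\Omega|\sqrt{\eps_N} + \eps_N\cdot\tfrac{|\Omega|}{2\sqrt{\eps_N}} = |\Omega|\sqrt{\eps_N},
\]
since there are at most $|\Omega|/(2\sqrt{\eps_N})$ cells with $|L_i|>2\sqrt{\eps_N}$.

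Combining these bounds gives $W_2^2(\sigma_N,\frac{1}{N}\sum_i\delta_{\beta_i}) \leq |\Omega|\sqrt{\eps_N}/N$ pointwise in $t$, whence
\[
\frac{1}{\eps_N}\int_0^T W_2^2\Bigl(\sigma_N,\frac{1}{N}\sum_i\delta_{\beta_i}\Bigr)\d t \leq \frac{T|\Omega|}{N\sqrt{\eps_N}} \xrightarrow{N\to\infty} 0
\]
precisely under the hypothesis $N^2\eps_N\to\infty$. The hypothesis of \autoref{thm:conv-diffusion} is then satisfied and the conclusion follows. The main subtle point is the uniform variance bound $V_i\leq \eps_N$, which must be argued carefully since the Gaussian center $x_i$ need not lie inside the truncation interval $L_i$; once this is established, the balance between particle number and Moreau--Yosida parameter required by $N^2\eps_N\to\infty$ appears transparently, because both the small-cell and the large-cell contributions scale like $\sqrt{\eps_N}/N$.
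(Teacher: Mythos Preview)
Your proposal is correct and follows essentially the same route as the paper: reduce to \autoref{thm:conv-diffusion} via a static pointwise bound $W_2^2(\sigma_N,\frac{1}{N}\sum_i\delta_{\beta_i})\leq C(\Omega)\sqrt{\eps_N}/N$, obtained by splitting the Laguerre cells into short and long ones and bounding the variance of each truncated Gaussian by $\min(|L_i|^2,\eps_N)$ up to constants. The only cosmetic difference is that the paper isolates the variance bound $V_i\leq\eps_N$ as a lemma citing Kanter's theorem on peakedness of truncated Gaussians, whereas you invoke Brascamp--Lieb (equally valid, and indeed robust to the fact that $x_i$ need not lie in $L_i$); the short/long threshold and the resulting $\sqrt{\eps_N}/N$ scaling are identical.
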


\begin{proof}
This is an immediate consequence of \autoref{thm:conv-diffusion} and \autoref{prop:pressure-bound-diffusion}, which provide
\[
\frac1{\eps_N} \int_0^T W_2^2\oleft(\sigma_N,\frac1N\sum_{i=1}^N\delta_{\beta_i}\right) \d t\leq \frac{1}{N\sqrt{\eps_N}} \to 0. \qedhere
\]
\end{proof}

\section{Bounds in 1D}
\label{sec:bounds}

In this section we prove that, for both the crowd motion and the linear diffusion discretizations, in one dimension there are bounds on the quantities which are relevant for the application of Theorems \ref{thm:convergence-crowd-motion} and \ref{thm:conv-diffusion}. The results come from a static analysis, in the sense that the evolution equations do not play any role in the estimates.

We begin with the easier case of the crowd motion.


\begin{proposition}\label{prop:pressure-bound-crowd-motion}
Let $\Omega\subset\setR$ be an interval. Let $x_1,\dots,x_N\in\setR$,
let $\mu_N$ be the corresponding atomic measure, $\sigma_N$ its
$W_2$-projection on $\set{\rho\in\Prob(\Omega)}{\rho\leq1}$, $T$ the
optimal transport map between $\sigma_N$ and $\mu_N$. Define the Laguerre cells and their barycenters as
$$ L_i = T^{-1}(\{x_i\}) $$
$$ \beta_i = N \int_{L_i} x \dd\sigma_N(x) $$
Then, choosing $\eps=1/N$,
\[
\frac1{\eps^2}  W_2^2\oleft(\sigma_N,\frac{1}{N}\sum_{i=1}^N\delta_{\beta_i}\right) \d t
\leq \frac{1}{12}.
\]
\end{proposition}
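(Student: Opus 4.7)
The plan is to exploit the rigidity of the one-dimensional partial optimal transport problem, which forces $\sigma_N$ to be a sum of indicator intervals and the Laguerre cells to have a completely explicit geometry. I would work in three phases.

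First, I would establish that $\sigma_N$ is the indicator of a set of measure~$1$. Using the structure recalled in the proof of \autoref{thm:convergence-crowd-motion} (see also \cite[Lemma 3.1]{gf-cm}), write $T = \mathrm{Id} - \nabla\phi$ with $\phi \le 0$ semi-concave and $(1-\sigma_N)\phi = 0$. On the set $B = \{\sigma_N < 1\}$ the function $\phi$ vanishes identically, and semi-concavity together with the density-point characterization forces $\nabla\phi = 0$ almost everywhere on $B$. Consequently $T$ restricts to the identity on $B$, so $T_\#(\sigma_N\mathbf{1}_B) = \sigma_N\mathbf{1}_B$; since $T_\#\sigma_N = \mu_N$ is purely atomic while $\sigma_N\mathbf{1}_B$ is absolutely continuous, this forces $\sigma_N\mathbf{1}_B = 0$. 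Hence $\sigma_N = \mathbf{1}_A$ a.e.\ for some $A \subset \Omega$ with $|A| = 1$.

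Second, I would pin down the geometry of the Laguerre cells. In one dimension the monotonicity of $T$ makes the cells $L_i = T^{-1}(x_i)$ pairwise disjoint intervals, ordered consistently with the $x_i$. Each has $\sigma_N$-mass $1/N$ and lies inside $\{\sigma_N = 1\}$, so has Lebesgue length exactly $1/N$; write $L_i = [c_i - \tfrac{1}{2N},\,c_i + \tfrac{1}{2N}]$. A direct integration then shows that the barycenter is the midpoint, $\beta_i = c_i$.

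Third, I would compute the Wasserstein distance explicitly. Because the midpoints $\beta_i$ inherit the same ordering as the cells $L_i$, the map sending each $L_i$ to the singleton $\{\beta_i\}$ is monotone and therefore $W_2$-optimal between $\sigma_N$ and $\frac{1}{N}\sum_i \delta_{\beta_i}$. A one-line computation then yields
\[
W_2^2\left(\sigma_N,\,\tfrac{1}{N}\sum_{i=1}^N \delta_{\beta_i}\right)
= \sum_{i=1}^N \int_{L_i} (x-\beta_i)^2\,dx
= N\int_{-1/(2N)}^{1/(2N)} y^2\,dy
= \frac{1}{12N^2},
\]
and multiplication by $1/\eps^2 = N^2$ gives the claimed bound (achieved with equality, in fact). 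The only delicate step is the first one, where the semi-concavity of $\phi$ must be combined with the atomic nature of $\mu_N$ to rule out intermediate densities; everything afterward is a routine one-dimensional calculation.
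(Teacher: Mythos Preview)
Your proposal is correct and follows the same route as the paper's proof: identify each Laguerre cell as an interval of length $1/N$ on which $\sigma_N$ has density~$1$, note that the barycenter is the midpoint, and compute $\sum_i \int_{L_i}(y-\beta_i)^2\,\d y = N\int_{-1/(2N)}^{1/(2N)} y^2\,\d y = \tfrac{1}{12N^2}$. The only difference is that you spell out in your first and third phases two facts the paper simply asserts---that $\sigma_N$ is an indicator (density exactly~$1$ on its support) and that the monotone cell-to-barycenter map is $W_2$-optimal---so your write-up is a more detailed version of the same argument rather than a different one.
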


\begin{proof}
Each Laguerre cell $L_i$ is an interval of length $1/N$ and its barycenter is the midpoint. Moreover, $\sigma_N$ has constant density $1$ on every cell $L_i$, therefore
\begin{equation*}
\begin{split}
W_2^2\oleft(\sigma_N,\sum_{i=1}^N\delta_{\beta_i}\right)
&= \sum_{i=1}^N \int_{L_i} \abs{y-\beta_i}^2 \d y
= \sum_{i=1}^N \int_{\beta_i-1/(2N)}^{\beta_i+1/(2N)} \abs{y-\beta_i}^2 \d y \\
&= N \int_{-1/(2N)}^{1/(2N)} y^2\d y = \frac1{12N^2},
\end{split}
\end{equation*}
which gives the claim.
\end{proof}

We now pass to the case which is relevant for linear diffusion.
\begin{proposition}\label{prop:pressure-bound-diffusion}
Let $\Omega\subset\setR$ be a bounded interval. Let $x_1,\dots,x_N\in\setR$, let $\mu_N$ be the corresponding atomic measure, and define, for $\eps>0$:
\[
\sigma_N = \argmin_{\rho\in\Prob(\Omega)} \frac1{2\eps}W_2^2(\mu_N,\rho) + \Ent(\rho).
\]
Let $\beta_1,\dots,\beta_N$ be the barycenters of the Laguerre cells $L_1,\dots,L_N$ of $\sigma_N$. Then we have
\[
W_2^2\oleft(\sigma_N,\frac1N\sum_{i=1}^N\delta_{\beta_i}\right)\leq C(\Omega)\frac{\sqrt{\eps}}{N},
\]
where $C(\Omega)$ only depends on the length $|\Omega|$ of $\Omega$.
\end{proposition}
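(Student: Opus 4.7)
The plan is to reduce the Wasserstein quantity to a sum of per-cell variances, and then control each such variance using the fact that $\sigma_N$ has a log-concave (in fact Gaussian) profile on every Laguerre cell.

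First, I would derive the explicit form of $\sigma_N$ on each $L_i$. The first-order optimality conditions for the entropic Moreau–Yosida problem, already recorded in the proof of \autoref{thm:conv-diffusion}, give $\sigma_N = c_N \exp(-\phi_N/(2\eps))$ with $T_N = \mathrm{Id} - \tfrac{1}{2}\nabla\phi_N$. Since $T_N \equiv x_i$ on $L_i$, integration yields $\phi_N(y) = (y-x_i)^2 + c_i$ on $L_i$, so that
\[
\sigma_N(y) \propto \exp\!\left(-\frac{(y-x_i)^2}{2\eps}\right) \quad \text{on each } L_i,
\]
i.e.\ $\sigma_N$ restricted to $L_i$ is a truncated Gaussian centered at $x_i$ with variance parameter $\eps$. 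Then, since each Laguerre cell has $\sigma_N$-mass $1/N$ and $\beta_i$ is the barycenter of $\sigma_N$ on $L_i$,
\[
W_2^2\!\left(\sigma_N,\tfrac{1}{N}\sum_i \delta_{\beta_i}\right) = \sum_i \int_{L_i}(y-\beta_i)^2 \d\sigma_N(y) = \frac{1}{N}\sum_i V_i,
\]
where $V_i$ denotes the variance of the probability measure obtained by normalizing $\sigma_N$ on $L_i$. The proposition thus reduces to the estimate $\sum_i V_i \leq C(\Omega)\sqrt\eps$.

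For each $V_i$ I would combine two cell-wise bounds: Popoviciu's inequality gives $V_i \leq |L_i|^2/4$, while the one-dimensional Brascamp–Lieb inequality (equivalently, the Poincaré inequality for log-concave measures) gives $V_i \leq \eps$, since the log-density has second derivative identically equal to $1/\eps$ on $L_i$, so that testing the inequality against $f(y)=y$ yields this directly. This second bound is the only slightly delicate step: it needs to hold even when $x_i\notin L_i$, which is the case because restricting a log-concave density to a convex subset preserves both log-concavity and the uniform lower bound on $(-\log\sigma_N)''$. Combining the two bounds via $\min(a,b) \leq \sqrt{ab}$ gives $V_i \leq \tfrac{1}{2}\sqrt\eps\,|L_i|$, and summing over $i$,
\[
\sum_i V_i \leq \frac{\sqrt\eps}{2}\sum_i|L_i| = \frac{|\Omega|\sqrt\eps}{2},
\]
which yields the proposition with $C(\Omega) = |\Omega|/2$.
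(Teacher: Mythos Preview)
Your proof is correct and follows the same overall strategy as the paper: reduce to per-cell variances $V_i$, exploit the Gaussian profile of $\sigma_N$ on each $L_i$, and combine a diameter-based bound with the bound $V_i\leq\eps$. The differences are purely technical. For the bound $V_i\leq\eps$, the paper invokes a result of Kanter (\autoref{lem:quantization-long-gaussian-cells}) stating that conditioning a Gaussian on an interval can only decrease its variance, whereas you use the one-dimensional Brascamp--Lieb/Poincar\'e inequality; both are valid, and yours is arguably the more standard tool. For the interpolation step, the paper splits cells into ``short'' ($\ell_i<\bar\ell$) and ``long'' ($\ell_i\geq\bar\ell$) and optimizes over the threshold $\bar\ell$, while your use of $\min(a,b)\leq\sqrt{ab}$ performs the same optimization implicitly and uniformly across cells. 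Your route is slightly more streamlined and yields the sharper constant $C(\Omega)=\tfrac{1}{2}|\Omega|$ in place of the paper's $\tfrac{3}{2}|\Omega|$.
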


The proof of this proposition relies on the next lemma, which is a
particular case of the main theorem of \cite{kanter1977reduction}.
\begin{lemma}\label{lem:quantization-long-gaussian-cells}
  Let $\sigma = e^{- (y-x)^2/(2\eps)} \dd y$ be a Gaussian measure,
  let $L$ be an interval such that $\sigma(L) = \frac{1}{N}$.  Then,
  the variance of $N \sigma \res L$ is upper bounded by the
  variance of $\sigma$:
  $$ N \int_{L} (y-\beta)^2 \dd \sigma(y) \leq
  \eps, \hbox{ where } \beta = N \int_L
  y\dd\sigma(y). $$
\end{lemma}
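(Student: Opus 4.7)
The plan is to recognize the conditional measure $\mu := N\sigma|_L$ as a strongly log-concave probability measure and apply the one-dimensional Brascamp--Lieb variance inequality. Indeed, $\mu$ is supported on the interval $L$ with density proportional to $e^{-V(y)}$ where $V(y) = (y-x)^2/(2\eps)$, so in the interior of $L$ the potential satisfies $V''(y) \equiv 1/\eps$, meaning $\mu$ is log-concave with convexity parameter $1/\eps$.

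The Brascamp--Lieb inequality then yields, for any smooth $g$,
\[
\mathrm{Var}_\mu(g) \leq \int \frac{(g'(y))^2}{V''(y)}\,\d\mu(y).
\]
Taking $g(y) = y$ and unfolding the definitions, the left-hand side equals $N\int_L (y-\beta)^2\,\d\sigma(y)$ (since $\beta$ is precisely the $\mu$-mean of $y$), and the right-hand side equals $\eps$, which gives the claimed bound in one line.

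The only technical subtlety is that $V$ is effectively $+\infty$ outside $L$, whereas Brascamp--Lieb is typically stated for log-concave densities on all of $\setR$. This is handled by a standard approximation: replace the hard cutoff by a smooth strictly convex barrier $V_\delta$ that agrees with $(y-x)^2/(2\eps)$ on $L$ and grows rapidly outside with $V_\delta'' \geq 1/\eps$ everywhere, apply the inequality to $\mu_\delta \propto e^{-V_\delta}\,\d y$, and pass to the limit $\delta \to 0$; the boundary contribution only strengthens the inequality. A self-contained alternative avoiding Brascamp--Lieb is to reduce to $x=0$ and $\eps=1$ by translation and scaling, use the identity $y e^{-y^2/2} = -(e^{-y^2/2})'$ to compute the first and second moments on $L=[a,b]$ by integration by parts, and then verify an inequality of the form $Z(a\phi(a)-b\phi(b)) \leq (\phi(a)-\phi(b))^2$ with $Z = \int_a^b \phi$ and $\phi(y) = e^{-y^2/2}$, which can be checked by monotonicity in $a$ and $b$ using log-concavity of $\phi$; this elementary route is where essentially all the actual work would sit.
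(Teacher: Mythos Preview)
Your proposal is correct. Applying the Brascamp--Lieb inequality to the truncated Gaussian $\mu = N\sigma\res L$, whose potential satisfies $V'' \equiv 1/\eps$ on the interior of $L$, gives $\mathrm{Var}_\mu(y) \leq \eps$ in one line, and the approximation argument you sketch for handling the hard cutoff at $\partial L$ is standard and works.

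The paper, however, does not prove the lemma: it simply records it as a particular case of the main theorem of Kanter (1977). So your route is genuinely different. The Brascamp--Lieb argument is more self-contained and makes the mechanism transparent --- the variance bound comes directly from the uniform strong convexity of the Gaussian potential on $L$ --- whereas the paper defers to an external variance-reduction theorem. Both ultimately rest on log-concavity, but yours yields the estimate essentially immediately once Brascamp--Lieb is granted. Your alternative elementary route (reducing by translation and scaling to the explicit inequality $Z\bigl(a\phi(a)-b\phi(b)\bigr) \leq \bigl(\phi(a)-\phi(b)\bigr)^2$ with $\phi(y)=e^{-y^2/2}$ and $Z=\int_a^b\phi$) is also correct in outline, though as you acknowledge it is where the real work would lie if one insisted on avoiding Brascamp--Lieb altogether.
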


\begin{proof}[Proof of \autoref{prop:pressure-bound-diffusion}]
Let $\ell_i=\abs{L_i}$ denote the length of the $i$-th Laguerre cell.
We fix a parameter $\bar\ell\in(0,1)$ to be specified later and divide the cells in two groups:
\begin{itemize}
\item short cells: $S=\set{i}{\ell_i<\bar\ell}$;
\item long cells: $L=\set{i}{\ell_i\geq\bar\ell}$.
\end{itemize}
Notice that $\abs{S}\leq N$ and $\abs{L}\leq
\abs{\Omega}/\bar\ell$. By \autoref{prop:my-boltzmann}, we know that
the restriction of $\sigma_N$ to the Laguerre cell $L_i$ is proportional
to a Gaussian of the form
$\exp(-\frac{1}{2\eps}(x-x_i)^2)$. Therefore, using
\autoref{lem:quantization-long-gaussian-cells} to estimate the
contribution of the long cells, we get
\begin{equation*}
\begin{split}
W_2^2\oleft(\sigma_N,\frac1N\sum_{i=1}^N\delta_{\beta_i}\right)
&= \sum_{i\in S} \int_{L_i} (y-\beta_i)^2 \dd \sigma_N(y)
	+ \sum_{i\in L} \int_{L_i} (y-\beta_i)^2 \dd \sigma_N(y) \\
&\leq \sum_{i\in S} \frac1N \ell_i^2 + \sum_{i\in L} \frac{\eps}{2N}
\leq \frac{\bar\ell}{N} \sum_{i\in S}\ell_i + \frac{\eps\abs{\Omega}}{2\bar\ell N} \\
&\leq \frac{\bar\ell |\Omega|}{N} + \frac{\eps\abs{\Omega}}{2\bar\ell N}
= \frac{3\sqrt{\eps} \abs{\Omega}}{2N},
\end{split}
\end{equation*}
where in the last step we chose $\bar\ell=\sqrt{\eps}$.
\end{proof}


\section{Numerical scheme}

\subsection{Computation of the Moreau-Yosida regularization}
\label{sec:comput-my}
Let $F: \Probac(\Omega)\to\setR \cup\{+\infty\}$, which we assume to
be lower-semicontinuous with respect to the Wasserstein metric
$\Wass_2$. We consider the Moreau-Yosida regularization
\begin{equation} \label{eq:my}
  F_\eps:X \in \setR^{Nd} \mapsto \inf_{\sigma \in \Probac(\Omega)}
  \frac{1}{2\eps}\Wass_2^2(\sigma,\mu_X) + F(\sigma),
\end{equation}
and we assume that for every $X\in \setR^{Nd}$, the minimization
problem defining $F_\eps(X)$ admits a unique solution. This  assumption  is satisfied in the two relevant cases for this paper, since the projection onto measures with bounded densities is always unique, see \cite{DePhilippis2016}, and the minimizer in the entropy case is unique because of strict convexity.  We let
$$\mathcal{D}_N = \{ (x_1,\hdots,x_N)\in\setR^{Nd} \mid x_i = x_j
\hbox{ for some } i\neq j\}.$$ Our first proposition gives an explicit
formulation for the gradient of $F_\eps$ given a solution $\sigma$
of the minimization problem defininig $F_\eps$. We recall that a
function $F$ on $\Rsp^k$ is \emph{$\lambda$-semi-concave} if and only
if $F - \lambda\nr{\cdot}^2$ is concave.

\begin{proposition} \label{prop:derivative}
   $F_\eps$ is $\frac{1}{2N\eps}$-semi-concave on $\setR^{Nd}$ and
  continuously differentiable on $\setR^{Nd} \setminus \mathcal{D}_N$. Given
  $X=(x_1,\hdots,x_N)\in\setR^{Nd} \setminus \mathcal{D}_N$, we let $\sigma$ the
   unique minimizer in \eqref{eq:my}, $T$ the unique optimal transport
   map between $\sigma$ and $\mu_X$, and $L_i = T^{-1}(x_i)$. Then,
   $$\nabla_{x_i} F_\eps(X) = \frac{1}{N} \frac{x_i - \beta_i(X)}{\eps} \hbox{ where } \beta_i(X) := N \int_{L_i} x \d\sigma(x)$$
\end{proposition}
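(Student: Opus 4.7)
The plan is to handle semi-concavity and differentiability separately, both working directly with the defining minimization problem. For the first claim, I fix any admissible $\sigma$ and expand
\[
W_2^2(\sigma,\mu_X) = \int_\Omega |y|^2 \d\sigma(y) + \frac{1}{N}|X|^2 - 2 \sup_{\gamma}\int y\cdot x\, \d\gamma(x,y),
\]
where the supremum runs over transport plans $\gamma$ with marginals $\sigma$ and $\mu_X$. Disintegrating any such $\gamma$ with respect to its second marginal, the supremum is a supremum of linear functions of $X$ of the form $X\mapsto \frac{1}{N}\sum_i x_i\cdot\int y\d\sigma_i(y)$, and hence convex in $X$. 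Therefore $X\mapsto \frac{1}{2\eps}W_2^2(\sigma,\mu_X) - \frac{1}{2N\eps}|X|^2$ is concave for every fixed $\sigma$. Passing to the infimum over $\sigma$ (adding the constant $F(\sigma)$) preserves concavity, so $F_\eps - \frac{1}{2N\eps}|\cdot|^2$ is concave, i.e., $F_\eps$ is $\frac{1}{2N\eps}$-semi-concave on all of $\setR^{Nd}$.

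For differentiability on $\setR^{Nd}\setminus\mathcal{D}_N$, fix such an $X$ and let $\sigma$ be the unique minimizer in the definition of $F_\eps(X)$. Since the atoms of $\mu_X$ are distinct and $\sigma$ is absolutely continuous in both cases of interest, Brenier's theorem gives a unique optimal transport map $T:\Omega\to\setR^d$ from $\sigma$ to $\mu_X$, so $L_i=T^{-1}(x_i)$ is determined up to $\sigma$-null sets and $\beta_i(X)$ is well defined. Using $\sigma$ and the partition $(L_i)$ as a sub-optimal competitor, for every $X'$ one has
\[
F_\eps(X') \leq h(X') := \frac{1}{2\eps}\sum_{i=1}^N \int_{L_i} |y - x'_i|^2 \d\sigma(y) + F(\sigma),
\]
with equality at $X'=X$. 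A direct computation gives $\nabla_{x_i} h(X) = \frac{1}{N\eps}(x_i - \beta_i(X))$, so this vector is a supergradient of the semi-concave function $F_\eps$ at $X$. To upgrade this to a genuine gradient, I show that the superdifferential at $X$ is a singleton: any other element would come from an alternative optimal pair $(\sigma',T')$, which is excluded by the uniqueness of $\sigma$ and of the optimal transport map.

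Continuity of $\nabla F_\eps$ on $\setR^{Nd}\setminus\mathcal{D}_N$ then reduces to continuity of $X\mapsto\beta_i(X)$. Using the $W_2$-continuity of $X\mapsto\sigma^*(X)$ (from \cite{DePhilippis2016} for the projection case, and implied by strict convexity of the entropy for the diffusion case), together with the classical stability of optimal transport maps under perturbations of the source and of a discrete target with distinct atoms, the Laguerre cells $L_i$ and hence their barycenters depend continuously on $X$. I expect the main technical obstacle to be the singleton-superdifferential step: writing this cleanly requires either a Danskin-type envelope argument for infima of a family of $\frac{1}{2N\eps}$-semi-concave functions, or a two-sided matching argument exploiting both uniqueness of $\sigma^*$ and of the optimal transport from $\sigma^*$ to $\mu_X$; the one-sided bound giving the supergradient alone is not enough.
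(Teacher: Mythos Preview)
Your proof is correct and tracks the paper's closely: the semi-concavity argument (via the Kantorovich decomposition of $W_2^2$) and the identification of the supergradient by freezing the optimal $\sigma$ together with its Laguerre partition are exactly what the paper does, though the paper packages both steps into a single inequality
\[
F_\eps(Y)\leq F_\eps(X)+\Bigl\langle \tfrac{1}{N\eps}(\beta(X)-X),\,X-Y\Bigr\rangle+\tfrac{1}{2N\eps}|X-Y|^2.
\]

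Where you diverge is precisely the step you flag as the obstacle. You try to show that the superdifferential of $G_\eps:=F_\eps-\tfrac{1}{2N\eps}|\cdot|^2$ is a singleton by arguing that any other element must arise from an alternative optimal pair $(\sigma',T')$; as you suspect, this inference is not valid in general, since the superdifferential of an infimum need not be contained in the set of gradients at minimizers. The paper sidesteps this entirely: once $G_\eps$ is known to be concave and $X\mapsto -\tfrac{1}{N\eps}\beta(X)$ is a \emph{continuous} selection from $\partial^+G_\eps$ on $\setR^{Nd}\setminus\mathcal{D}_N$ (continuity coming from uniqueness of $\sigma$ and stability of optimal transport maps), the standard convex-analysis fact that a concave function with a continuous supergradient selection is $C^1$ yields differentiability immediately, with gradient equal to that selection. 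You already have the continuity of $\beta$ from stability; simply invoke it \emph{before} the differentiability claim rather than after, and no Danskin-type or two-sided matching argument is needed.
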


  \begin{proof} First, let us underline that we work under the assumption that the optimal $\sigma$ in the minimization
problem defining $F_\eps(X)$ is unique for every $X$. This uniqueness implies continuity of the map $X\mapsto \sigma$. Let $X \in \setR^{Nd}$, $\sigma$ the
    unique minimizer in \eqref{eq:my} and $T$ the unique optimal
    transport map between $\sigma$ and $\mu_X$.  Given $Y \in
    \setR^{Nd}$,
    $$ F_\eps(Y) \leq \frac{1}{2\eps}\Wass_2^2(\sigma,\mu_Y) +
    F(\sigma).$$ By construction, one can decompose $\sigma =
    \sum_{1\leq i \leq N} \sigma_i$ where $\sigma_i\geq 0$,
    $T(\sigma_i) = x_i$, and such that
    $\sigma_i(\Omega)=1/N$. Considering the transport which maps
    $\sigma_i$-a.e. point to $y_i$, one gets
    \[
    \begin{aligned}
      F_\eps(Y) &\leq \frac{1}{2\eps} \sum_i \int \abs{x - y_i}^2 \d \sigma_i + F(\sigma) \\
      &\leq \frac{1}{2\eps} \sum_i \int \abs{x - x_i + x_i - y_i}^2 \d \sigma_i + F(\sigma) \\
      &\leq F_\eps(X) + \sum_i \sca{\frac{1}{N\eps}(\beta_i(X) - x_i)}{x_i - y_i} + \frac{1}{2N\eps} \sum_i \abs{x_i - y_i}^2\\
      &\leq F_\eps(X) + \sca{\frac{1}{N\eps}(\beta(X) - X)}{X-Y} + \frac{1}{2N\eps} \abs{X-Y}^2
    \end{aligned}
    \]
    where we have set $\beta_i(X) = N \int x \d\sigma_i$ and $\beta(X) = (\beta_1(X),\hdots,\beta_N(X))$. This inequality can be rewritten as
    $$ G_\eps(Y) \leq G_\eps(X) +
    \sca{-\frac{1}{N\eps}\beta(X)}{Y-X} $$ where $G_\eps(X) = F_\eps(X)
    - \frac{1}{2N\eps}\nr{X}^2$.  This shows that
    $-\frac{1}{N\eps}\beta(X)$ belongs to the superdifferential of
    $G_\eps$ at $X$, so that the superdifferential of $G_\eps$ is
    never empty, also showing that $G_\eps$ is concave.  If $X \not\in
    \mathcal{D}_N$, then $\sigma_i = \sigma \res T^{-1}(x_i)$ and the
    point $\beta_i(X)$ is uniquely defined (we use here the hypothesis
    on the uniqueness of the minimal $\sigma$ in \eqref{eq:my}). Using
    the stability of optimal transport maps, we get that $X \in
    \setR^{Nd}\setminus D \mapsto \beta(X)$ is continuous on
    $\setR^{Nd}\setminus D_N$, which shows that $G_\eps \in
    \mathcal{C}^{1}(\setR^{Nd}\setminus D_N)$ and that
    $$\nabla_{x_i}
    G_\eps(X) = -\frac{1}{N\eps} \beta_i(X).$$
The properties of $F_\eps$ can be deduced from those of $G_\eps$.
  \end{proof}

  The next two propositions explain how to compute the optimal
  $\sigma$ in the definition of the Moreau-Yosida regularization in
  the crowd motion and linear diffusion. Using Kantorovich duality,
  this problem can be reformulated as the computation of a Kantorovich
  potential satisfying a finite-dimensional non-linear system,
  \eqref{eq:crowd-my-dual} or \eqref{eq:ent-my-dual}.

  Given $x_1,\hdots,x_N \in \setR^d$ and $\psi\in \setR^N$ we define the
  Laguerre cell of the point $x_i$ with respect to $\Omega$ as
  $$ L_i(\psi) = \{ x \in \Omega \mid \forall j, \nr{x - x_i}^2 -
  \psi_i \leq \nr{x - x_j}^2 - \psi_j \}. $$ Note that the next
  proposition is a special case of the characterization of Wasserstein
  projections on $K$, proven in \cite[Lemma 3.1]{gf-cm}. It is
  illustrated in Figure~\ref{fig:crowd-gradient}.

        \begin{figure} \label{fig:crowd-gradient}
      \begin{centering}
      \includegraphics[width=.24\textwidth]{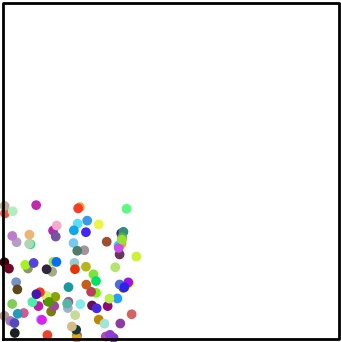}\hfill 
      \includegraphics[width=.24\textwidth]{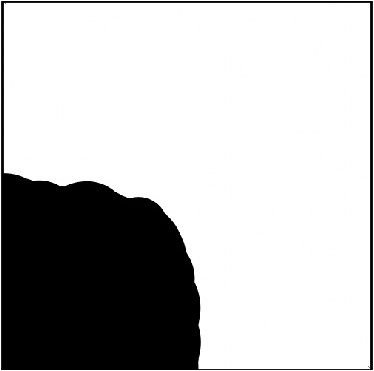}\hfill 
      \includegraphics[width=.24\textwidth]{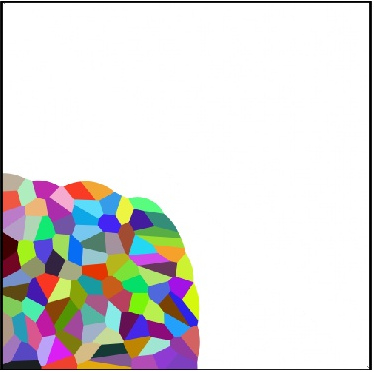}\hfill 
      \includegraphics[width=.24\textwidth]{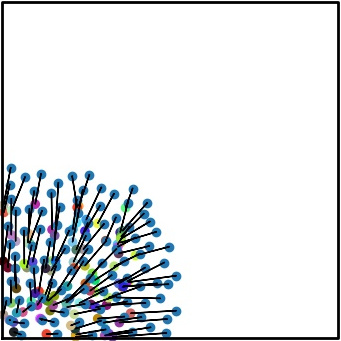}
      \end{centering}
      \caption{From left to right: a) a point cloud $x_1,\hdots,x_N$
        drawn uniformly in $[0,\frac{4}{5}]^2$ with $N=100$ points.
        b) the support of Wasserstein projection of $\mu =
        \frac{1}{N}\sum_i\delta_{x_i}$ on the set of probability
        densities bounded by $1$, c) the Laguerre cells d) the arrows
        joining colored points to blue points are proportional to
        $-\nabla_{x_i} \Ent_\eps(x_1,\hdots,x_N)$.}
    \end{figure}

  \begin{proposition}\label{prop:derivative-crowd}
    Consider $F:\Prob(\Omega)\to\setR$ defined by $F(\mu) = 0$ if
    $\mu\in K$ and $F(\mu) = +\infty$ otherwise, where $K$ is defined
    in \eqref{eq:K}. Then for all $X\in\setR^{Nd}\setminus
    \mathcal{D}_N$, there exists $\psi \in \Rsp^N_-$ such that
      \begin{equation} \label{eq:crowd-my-dual}
        \forall i,~ \abs{L_i(\psi)\cap \Omega\cap \Ball(x_i, \sqrt{\psi_i})} = \frac{1}{N}
      \end{equation}
      Given such a $\psi$, define $\phi = \min(\min_{i} \nr{\cdot - x_i}^2 - \psi_i,0)$ and $\sigma = \mathbf{1}_{\{\phi<0\} \cap \Omega}$.
      \begin{itemize}
      \item[(a)] $\sigma \in \Probac(\Omega)$ is the Wasserstein projection of $\mu_X$ on
        $K$,
      \item[(b)] $\phi\leq 0, $ $\phi(1-\sigma) = 0$
      \item[(c)]  $(\phi,\psi)$ is an admissible pair of Kantorovich potential in the transport from $\sigma$ to $\mu_X$
      \end{itemize}
  \end{proposition}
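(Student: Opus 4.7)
The plan is to reformulate the Moreau--Yosida projection onto $K$ as a partial optimal transport problem and then to read off the claimed explicit structure from Kantorovich duality. Since $\mu_X$ is atomic with mass $1/N$ at each $x_i$, minimising $W_2^2(\sigma,\mu_X)$ among $\sigma\in K$ is equivalent to choosing disjoint sets $A_1,\dots,A_N\subset\Omega$ with $\abs{A_i}=1/N$ that minimise $\sum_i\int_{A_i}\abs{x-x_i}^2\dd x$: the constraints $\sigma\leq 1$ and $\sigma(\Omega)=1$, combined with optimality of the transport, force $\sigma=\mathbf{1}_A$ with $A=\bigsqcup_i A_i$, and the sets $A_i=T^{-1}(\{x_i\})$ are exactly the Laguerre cells of the optimal transport map $T$ from $\sigma$ to $\mu_X$.

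Kantorovich duality for this constrained transport produces a potential $\psi\in\Rsp^N$ associated to $\mu_X$ and a potential $\phi\in C(\Omega)$ associated to the reference density $\mathbf{1}_\Omega$, with the compatibility $\phi(x)+\psi_i\leq\abs{x-x_i}^2$ and equality $\sigma$-almost everywhere on $T^{-1}(\{x_i\})$. The free-boundary nature of the partial transport imposes $\phi\leq 0$ together with the complementary slackness $\phi(1-\sigma)=0$ of claim~(b): where $\sigma<1$ the saturation constraint is slack, so $\phi=0$; on the saturated region $\{\sigma=1\}$ the potential is generally strictly negative and equals $\min_i(\abs{x-x_i}^2-\psi_i)$. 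Combining both cases yields
\[ \phi(x)=\min\bigl(\min_i(\abs{x-x_i}^2-\psi_i),\,0\bigr), \]
which is the expression in the statement. The support of $\sigma$ restricted to the Laguerre cell $L_i(\psi)$ is then $\{x\in L_i(\psi):\abs{x-x_i}^2<\psi_i\}\cap\Omega=L_i(\psi)\cap\Ball(x_i,\sqrt{\psi_i})\cap\Omega$, and the mass-balance constraint $\sigma(L_i(\psi))=1/N$ reads exactly as the system \eqref{eq:crowd-my-dual}. Claims (a) and (c) then follow from the admissibility of the pair $(\phi,\psi)$ together with the matching of the primal and dual objective values.

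The main obstacle is the existence of $\psi$ solving the nonlinear mass-balance system \eqref{eq:crowd-my-dual}. A direct continuity and monotonicity argument on the map $\psi\mapsto\bigl(\abs{L_i(\psi)\cap\Ball(x_i,\sqrt{\psi_i})\cap\Omega}\bigr)_i$ is conceivable but delicate, especially near $\partial\Omega$ and when adjacent Laguerre cells interact. A cleaner route, which we would follow, is to invoke the characterisation of Wasserstein projections onto $K$ proved in \cite[Lemma~3.1]{gf-cm}: this proposition is exactly its specialisation to $\mu=\mu_X$, and the existence of $\psi$, together with all the properties of the pair $(\phi,\sigma)$, are immediate consequences of that general result.
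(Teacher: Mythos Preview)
Your proposal is correct in outline but takes a different route from the paper. You argue heuristically from the structure of partial optimal transport---first that the projection must be an indicator, then that complementary slackness forces the claimed form of $\phi$---and finally defer the existence of $\psi$ (and, implicitly, the rigorous justification of the whole picture) to \cite[Lemma~3.1]{gf-cm}. The paper instead gives a self-contained argument: it writes the projection as a saddle-point problem $\min_{\sigma\in K}\max_{\psi}$, swaps the order, solves the inner minimisation in $\sigma$ explicitly (the optimal $\sigma$ at fixed $\psi$ is exactly $\mathbf{1}_{\{\min_i \nr{\cdot-x_i}^2-\psi_i<0\}\cap\Omega}$), and obtains a concave, $\mathcal C^1$ dual functional $D(\psi)$ whose first-order conditions are precisely the mass-balance system \eqref{eq:crowd-my-dual}. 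Existence of a maximiser of $D$ then gives $\psi$ directly, and the claimed $\sigma$ and $\phi$ are read off from the saddle point; strong duality yields optimality of $\sigma$.

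The practical advantage of the paper's approach is twofold: it does not rely on the external characterisation (which the paper does cite as an alternative), and---more importantly in this context---the concave dual $D$ is exactly the object solved by the damped Newton method in the numerical scheme, so the proof and the algorithm are the same computation. Your route is shorter on paper but hides the constructive content; in particular, your heuristic derivation of $\phi(1-\sigma)=0$ and of the form of $\phi$ is not a proof on its own, and everything ultimately rests on the cited lemma.
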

  
  \begin{proof}
    By Kantorovich duality, one can write for any $\sigma \in
    \Probac(\Omega)$,
    \begin{align*}
      \Wass_2^2(\sigma,\mu_X) &= \max_{\psi\in \Rsp^N} \int_{\Omega} \min_i \nr{x - x_i}^2 - \psi_i \dd \sigma(x) + \sum_{1\leq i\leq N}
    \frac{\psi_i}{N}
      \\ &=
 \max_{\psi \in \Rsp^N} 
    \sum_{1\leq i\leq N} \int_{L_i(\psi)} \nr{x - x_i}^2 - \psi_i \dd
    \sigma(x) + \sum_{1\leq i\leq N}
    \frac{\psi_i}{N},
    \end{align*}
    thus giving
    $$
    \min_{\sigma \in K} \frac{1}{2\eps} \Wass_2^2(\sigma,\mu_X) =
    \min_{\sigma \in K} \max_{\psi \in \Rsp^N} \frac{1}{2\eps}
    \sum_{1\leq i\leq N} \int_{L_i(\psi)} \nr{x - x_i}^2 - \psi_i \dd
    \sigma(x) + \frac{1}{2\eps} \sum_{1\leq i\leq N}
    \frac{\psi_i}{N}
      $$ Switching the minimum and the maximum, we get
    the following dual problem 
    \begin{align*}
      \max_{\psi \in \Rsp^N} &\min_{\sigma \in L^1(\Omega), 0\leq \sigma \leq 1} \frac{1}{2\eps} \sum_{1\leq i\leq N} \int_{L_i(\psi)} \nr{x - x_i}^2 - \psi_i \dd \sigma(x) + \frac{1}{2\eps} \sum_{1\leq i\leq N} \frac{\psi_i}{N}
      = \max_{\psi\in\Rsp^N} D(\psi),
    \end{align*}
    where, we set
    $$B_i(\psi) := B(x_i, \sqrt{\psi_i}) = \{x \in \Rsp^d \mid \nr{x - x_i}^2 -
    \psi_i \leq 0 \},$$ and
    $$ D(\psi) = \frac{1}{2\eps} \sum_{1\leq i\leq N} \int_{L_i(\psi)
      \cap \Ball_i(\psi)} \nr{x - x_i}^2 - \psi_i \dd x -
    \frac{1}{2\eps} \sum_{1\leq i\leq N} \frac{\psi_i}{N}.$$ With
    similar arguments as in \cite[Theorem 1.1]
    {kitagawa2016convergence}, one can prove that $D$ is concave,
    $\mathcal{C}^1$, and that its partial derivatives are $$
    \partial_{\psi_i} D(\psi) = - \frac{1}{2\eps}\left(\abs{L_i(\psi)
    \cap \Ball_i(\psi)}- \frac{1}{N}\right).$$ It is easy to see that
    the maximum is attained in the dual problem, thus proving the
    existence of $\psi\leq 0$ satisfying \eqref{eq:crowd-my-dual}.
    Define $\sigma$ and $\phi$ as in the statement. Then, the property
    $\phi(1-\sigma)$ is obvious. In addition,
    $$ \phi(x) + \psi_i = \min(\min_j \nr{x - x_j}^2 - \psi_j,0)
    +\psi_i \leq \nr{x - x_i}^2,$$ so that the pair $(\phi,\psi)$ is
    admissible in the dual Kantorovich problem. It is also optimal in
    the optimal transport problem between $\sigma$ and $\mu_X$ by
    construction, since $\phi$ coincides with the $c$-transform of
    $\psi$ on the support of $\sigma$. This shows that
    \begin{align*}
      \frac{1}{2\eps} \dist^2_K(\mu_X) \leq \frac{1}{2\eps}
    \Wass_2^2(\sigma,\mu_X) &= \frac{1}{2\eps} \int \phi \dd\sigma -
    \frac{1}{2\eps} \int \psi \dd \mu_X \\
    &\leq \frac{1}{2\eps} \sum_i \int_{L_i(\psi) \cap \Omega\cap B_i(\psi)} \phi \dd x -
    \frac{1}{2\eps} \int \psi \dd \mu_X = D(\psi).
    \end{align*}
    Since the converse inequality holds by weak duality, we get strong
    duality, and in particular $\sigma$ is the solution to the primal
    problem.
    \end{proof}

  The following proposition, dealing with the linear diffusion case,
  is obtained in a very similar manner (one can for instance use
  Proposition~8.6 in \cite{otam} to get the optimality condition for the
  dual problem). We also refer the reader to Theorems 3.1--3.2 in
  \cite{bourne2018semi}, where similar results are shown for more
  general functionals.
    \begin{proposition} \label{prop:my-boltzmann}
    Let $H:\Prob(\Omega)\to\setR$ be Boltzmann's functional, and
    $X\in\setR^{Nd}\setminus D_N$. Then, there exists $\psi \in\Rsp^N$
    such that
    \begin{equation} \label{eq:ent-my-dual}
      \forall i,~ \int_{L_i(\psi)} e^{- \frac{1}{2\eps}(\nr{x - x_i}^2 - \psi_i)} \dd
      x = \frac{1}{N}
    \end{equation}
    Given such a $\psi$, define $\phi = \min_{i} \nr{\cdot - x_i}^2
    - \psi_i$ and $\sigma = e^{-\frac{\phi}{2\eps}}\mathbf{1}_\Omega$.
    Then,
    \begin{itemize}
    \item[(a)] $\sigma\in\Probac(\Omega)$ is the unique minimizer of
      $\min_{\Probac(\Omega)}\frac{1}{2\eps} \Wass_2^2(\cdot,\mu_X) + H(\cdot)$.
    \item[(b)] $\frac{1}{2\eps} \phi + \log(\sigma) = 0$
    \item[(c)] $(\phi,\psi)$ is a pair of optimal Kantorovich potentials in
      the transport from $\sigma$ to $\mu_X$.
      \end{itemize}
    \end{proposition}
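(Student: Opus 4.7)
The plan is to adapt the duality argument from the proof of Proposition~\ref{prop:derivative-crowd}. The starting point is Kantorovich duality for the quadratic cost,
$$\frac{1}{2\eps}\Wass_2^2(\sigma,\mu_X) = \max_{\psi\in\Rsp^N}\left[ \frac{1}{2\eps}\int_\Omega \phi_\psi \dd \sigma + \frac{1}{2\eps N}\sum_{i} \psi_i\right],$$
where $\phi_\psi(x):=\min_i (\nr{x-x_i}^2 - \psi_i)$ is the $c$-transform of $\psi$. Plugging this into the definition of $H_\eps(\mu_X)$ and exchanging the $\min_\sigma$ with the $\max_\psi$, the inner problem for fixed $\psi$ becomes
$$\min_{\sigma\in\Prob(\Omega)}\left[ \int_\Omega \sigma\log\sigma \dd x + \frac{1}{2\eps}\int_\Omega \phi_\psi\, \sigma \dd x\right],$$
whose minimizer, by a standard Lagrange multiplier computation, is the Gibbs measure $\sigma_\psi = e^{-\phi_\psi/(2\eps)}/Z(\psi)$ with partition function $Z(\psi):=\int_\Omega e^{-\phi_\psi/(2\eps)}\dd x$, and whose value is $-\log Z(\psi) - 1$.

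The dual problem thus reduces to maximizing on $\Rsp^N$ the finite-dimensional concave functional
$$D(\psi) = -\log Z(\psi) + \frac{1}{2\eps N}\sum_i \psi_i.$$
I would then check, as in the proof of Proposition~\ref{prop:derivative-crowd} and following \cite{kitagawa2016convergence, bourne2018semi}, that $D$ is $\mathcal{C}^1$ and compute
$$\partial_{\psi_i} D(\psi) = \frac{1}{2\eps}\left( \frac{1}{N} - \frac{1}{Z(\psi)}\int_{L_i(\psi)} e^{-\phi_\psi/(2\eps)}\dd x\right).$$
Note that $D$ is invariant under the shift $\psi \mapsto \psi + c\mathbf{1}$ only up to the additive constant $c/(2\eps)$, so I would quotient by that symmetry by fixing the normalization $Z(\psi)=1$; a maximizer then exists by standard compactness (sub-level sets of $-D$ modulo constants are bounded because $\phi_\psi$ is coercive in each $\psi_i$ once the others and the mean are fixed), and after this normalization the stationarity condition reduces exactly to \eqref{eq:ent-my-dual}.

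With such $\psi$ in hand and $\sigma,\phi$ defined as in the statement, part (b) is immediate since $\log\sigma = -\phi/(2\eps)$ on $\Omega$ by construction. For (c), $\phi$ is by definition the $c$-transform of $\psi$, so the Young-type inequality $\phi(x)+\psi_i\leq \nr{x-x_i}^2$ holds everywhere, with equality on $L_i(\psi)$; since \eqref{eq:ent-my-dual} gives $\sigma(L_i(\psi))=1/N = \mu_X(\{x_i\})$, the transport sending $L_i(\psi)\cap\mathrm{supp}(\sigma)$ to $\{x_i\}$ is admissible and realizes the duality gap, so $(\phi,\psi)$ is an optimal Kantorovich pair and in particular $\Wass_2^2(\sigma,\mu_X) = \int \phi\,\dd\sigma + \frac{1}{N}\sum_i \psi_i$. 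Combining this with $\log\sigma = -\phi/(2\eps)$ shows that $\sigma$ saturates the dual $D(\psi)$, and strong duality then yields (a); uniqueness follows from the strict convexity of the entropy combined with the strict convexity of $\sigma \mapsto \Wass_2^2(\sigma,\mu_X)$ restricted to $\Prob(\Omega)$. The step requiring the most care, and the only non-routine one, is the justification of the min-max exchange and the existence of a dual maximizer after fixing the gauge $Z(\psi)=1$; this is where one invokes either \cite[Proposition~8.6]{otam} or the semi-discrete optimal transport machinery of \cite{bourne2018semi}.
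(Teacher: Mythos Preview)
Your approach is essentially the one the paper sketches: it says the result ``is obtained in a very similar manner'' to Proposition~\ref{prop:derivative-crowd}, pointing to \cite[Proposition~8.6]{otam} and \cite{bourne2018semi} for the optimality condition, and you have correctly filled in those details via the min--max exchange, the Gibbs minimizer for the inner problem, and the finite-dimensional dual $D(\psi)$.

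Two minor slips worth cleaning up. First, the value of the inner minimization is $-\log Z(\psi)$, not $-\log Z(\psi)-1$ (your extra $-1$ is harmless but incorrect). Second, and relatedly, $D$ is in fact \emph{exactly} invariant under $\psi\mapsto\psi+c\mathbf{1}$: the shift contributes $-c/(2\eps)$ to $-\log Z$ and $+c/(2\eps)$ to the linear term, which cancel. This only simplifies your gauge-fixing. Finally, for uniqueness in (a) you do not need strict convexity of $\sigma\mapsto\Wass_2^2(\sigma,\mu_X)$ (which fails in general); the strict convexity of the entropy alone suffices, since $\Wass_2^2(\cdot,\mu_X)$ is convex.
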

    
    \begin{remark}
      In practice, equations \eqref{eq:crowd-my-dual} and
      \eqref{eq:ent-my-dual} are solved using the same damped Newton
      algorithm as in \cite{kitagawa2016convergence}. The cells
      $L_i(\psi)$ are computed using computational geometry
      techniques ensuring a near-linear computational time in
      $2$D. The integrals are computed exactly in the crowd motion
      case, and using quadratures ensuring a negligible numerical
      error in the linear diffusion case.
    \end{remark}

    \begin{remark}
      The computation of the Moreau-Yosida regularization of the
      congestion constraint and the entropy is implemented in the
      open-source library \texttt{sd-ot}, which is available at
      \texttt{https://github.com/sd-ot}. The numerical schemes for
      crowd motion and linear diffusion are implemented as examples in
      the Python package.
    \end{remark}

    \subsection{Numerical experiments (crowd motion)}
    In this paragraph, we consider $\Omega\subseteq \Rsp^2$ a compact
    domain, $V:\Omega\to\Rsp$ a potential, and we define as usual the
    congestion term $F: \Prob(\Omega)\to\Rsp$ by $F(\mu) = 0$ if
    $\mu$ has density $\leq 1$ and $+\infty$ if not. We consider the
    discretization of the crowd motion model explained above: an
    initial point set $X^0 = (x_1^0,\hdots,x_N^0)$ is evolved through
    the ODE system
    $$ \begin{cases}
      \frac{1}{N} \dot{x_i}(t) =
      -\nabla_{x_i} F_\eps(x_1(t),\hdots,x_{N_h}(t)) - \frac{1}{N} \nabla V(x_i(t)), \\
      x_i(0) = x_i^0 \end{cases}$$
    which we discretize using a simple explicit Euler scheme:
    $$ \frac{x_i^{k+1} - x_i^k}{\tau} = - \nabla_{x_i}
    F_\eps(x_1^k,\hdots,x_{N_h}^k) - \nabla V(x_i^k).$$
    Propositions~\ref{prop:derivative}--\ref{prop:derivative-crowd}
    can be used to compute the gradient of the regularized congestion
    term $F_\eps$. Figure~\ref{fig:crowd-gradient} illustrates this
    computation by showing a point set $X = (x_1,\hdots,x_N)$, the
    projected measure $\sigma \in \Probac(\Omega)$, $\sigma\leq 1$ and
    the gradient $(\nabla_{x_i} F_\eps(X))_{1\leq i\leq N})$.
    
 \begin{figure}\label{fig:converging-corridor}
   \centering
   \begin{tabular}{ccc}
   \includegraphics[height=.20\textwidth]{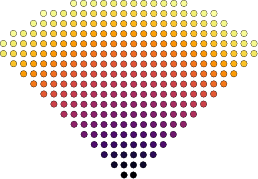}&
   \includegraphics[height=.18\textwidth]{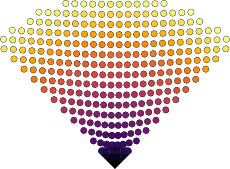}&
   \includegraphics[height=.17\textwidth]{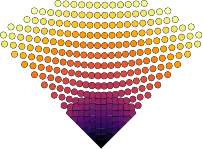}\\
   \includegraphics[height=.16\textwidth]{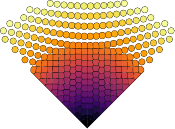}&
   \includegraphics[height=.13\textwidth]{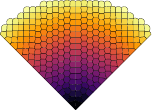}&
   \includegraphics[height=.13\textwidth]{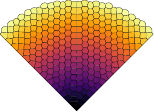}
   \end{tabular}
   \caption{Evolution of particles in the radial case, with
     $h=\frac{1}{40}$. The color of the cell $i$ is related to the
     position of the point $x_i^0$, allowing to visualize the movement
     of particle.}
 \end{figure}

    \paragraph{Radial case} As a first test case, we consider a
    simple problem with radial symmetry, introduced in
    \cite[Section~5]{gf-cm}, and whose solution is explicit.  The
    domain is the set $\Omega = \{ x\in \Rsp^2 \mid x_2 \geq
    \abs{x_1}, \nr{x}\leq R\}$, and the potential is given by $V(x) =
    \nr{x}$. In our experiment, we assume that $R = 2$ and $\alpha =
    \frac{1}{\pi}$, so that $\rho^0 = \alpha \mathbf{1}_\Omega$ is a
    probability measure.  As shown in \cite{gf-cm}, the evolution of
    the crowd is then given by
    $$ \rho_t(x) = \begin{cases} 1 & \hbox{ if } r\in [0,b(t)[ \\
          \alpha\left(1+\frac{t}{\nr{x}}\right) & \hbox{ if } r\in [b(t),R-r[ \\
              0& \hbox{ if } r\in [R-t,T], \end{cases}$$
    where $b$ is a solution of
    $$ \begin{cases} b(0) = 0 \\ b'(t) = \alpha \frac{b(t) + t}{b(t) - \alpha (b(t)+t)}.
      \end{cases}$$
 Given $h > 0$, we denote $N_h = \Card(\Omega\cap h \Zsp^2)$ and we
 let $x_1^0,\hdots, x_{N_h}^0$ be the an arbitrary numbering of the
 points in the intersection $\Omega\cap h \Zsp^2$.  In all
 experiments, we set $\tau = \frac{h}{2},$ $\eps = h$ and
 $T=1$. Figure~\ref{fig:converging-corridor} displays the evolution of
 the Laguerre cells at six time steps. To get error estimates, we
 measure the Wasserstein distance between:
 \begin{itemize}
 \item $\bar{\rho}_t = (x \mapsto \nr{x})_{\#} \rho_t \in
   \Prob(\Rsp)$, which is the distribution of distances from the
   origin, computed from the exact solution $\rho_t$;
 \item $\bar{\mu}^k = \frac{1}{N_h} \sum_{1\leq i\leq N_h}
   \delta_{\nr{x_i^k}} \in \Prob(\Rsp)$ the distribution of distances
   from the origin, computed on the discrete solution $\mu^k =
   \frac{1}{N_h}\sum_{1\leq i\leq N_h} \delta_{x_i}$.
 \end{itemize}
 The relation between $h$ and $\mathrm{err}_h = \max_{0\leq k \leq
   \frac{T}{\tau}} \Wass_2(\bar{\rho}_{k\tau}, \bar{\mu}^k),$ as
 reported in Table~\ref{table:errors}, suggests a near-linear
 convergence rate.

 \begin{table}
   \centering
  \begin{tabular}{|c|c|c|c|c|c|c|}
    \hline
    $h$ & $\frac{1}{20}$ & $\frac{1}{30}$ & $\frac{}{40}$& $\frac{1}{50}$& $\frac{1}{100}$& $\frac{1}{200}$\\
    \hline
    $\mathrm{err}_h$ & $5.24\cdot 10^{-2}$  & $3.06\cdot 10^{-2}$ & $2.15\cdot 10^{-2}$ & $1.70\cdot 10^{-2}$ & $4.96\cdot 10^{-3}$ & $2.80 \cdot 10^{-3}$\\
    \hline
  \end{tabular}  
   \caption{\label{table:errors} Error $\mathrm{err}_h$ between the exact and numeric solution to
     the crowd motion model as a function of the space-discretization
     $h$.}
\end{table}

 \paragraph{Bimodal case} In this case,
  is obtained as the union of three squares $\Omega = \Omega_\ell \cup
  \Omega_r\cup \Omega_c$: two ``rooms'' $\Omega_\ell$ and $\Omega_r$
  joined by a corridor $\Omega_c$, where
  $$\Omega_\ell = [0,\alpha]^2, \quad \Omega_r =
  [\frac{4}{3}\alpha,\frac{7}{3}\alpha] \times [0,\alpha], \quad
  \Omega_c =
        [\alpha,\frac{4}{3}\alpha]\times[\frac{1}{3}\alpha,\frac{2}{3}\alpha],
        \quad \alpha = \frac{2}{\sqrt{\pi}}$$ The crowd is initially
        located in the left room $\Omega_\ell$ and the potential $V$
        is constructed as the distance function to the two corners
        $\{(\frac{7}{3}\alpha,\alpha), (\frac73\alpha,0)\}$ of the
        right room $\Omega_r$. More precisely, $V$ is obtained as the
        solution to the following Eikonal equation, which is computed
        using a fast marching method:
 $$ \begin{cases}
   \nr{\nabla V} = 1, \\
   V(\frac{7}{3}\alpha,\alpha) = V(\frac{7}{3}\alpha,0) = 0.
 \end{cases} $$
 Given $h > 0$, we denote $N_h = \Card(\Omega_\ell\cap h \Zsp^2)$ and
 we let $x_1^0,\hdots, x_{N_h}^0$ be the list of points in
 $\Omega_\ell\cap h \Zsp^2$, so that the crowd is initially located on
 the left square $\Omega_\ell$. Here, we set the final time to $T=3$,
 and as before, we have $\eps=h$, $\tau=\frac{h}{2}$.
 \begin{figure}
   \centering
   \resizebox{.9\textwidth}{!}{
     \begin{tabular}{ccc}
   \includegraphics[width=.32\textwidth]{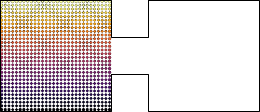} &
   \includegraphics[width=.32\textwidth]{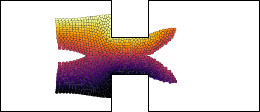}&
   \includegraphics[width=.32\textwidth]{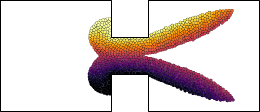}\\
   \includegraphics[width=.32\textwidth]{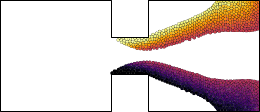}&
   \includegraphics[width=.32\textwidth]{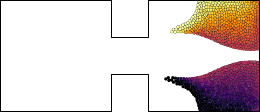}&
   \includegraphics[width=.32\textwidth]{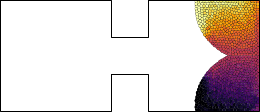}
   \end{tabular}}
   \caption{The distribution of the crowd computed at 6 different
     timesteps, with $h = \frac{\alpha}{30}$. The color of the
     Laguerre cells encodes the value of the $y$ coordinate of the
     corresponding particle at $t=0$.\label{fig:bimodal-low}}
    \end{figure}

 \begin{figure}
      \centering
      \resizebox{.93\textwidth}{!}{
             \begin{tabular}{ccc}
   \includegraphics[width=.32\textwidth]{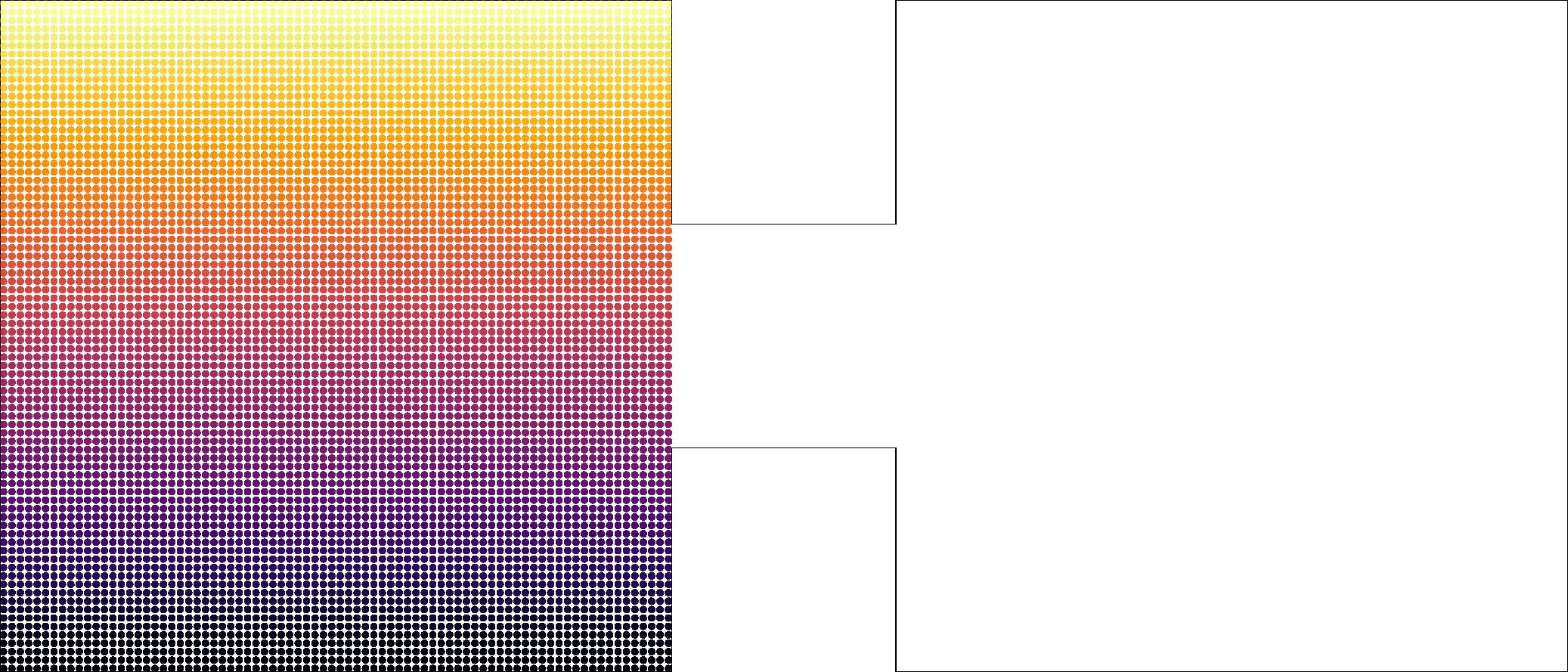}&
   \includegraphics[width=.32\textwidth]{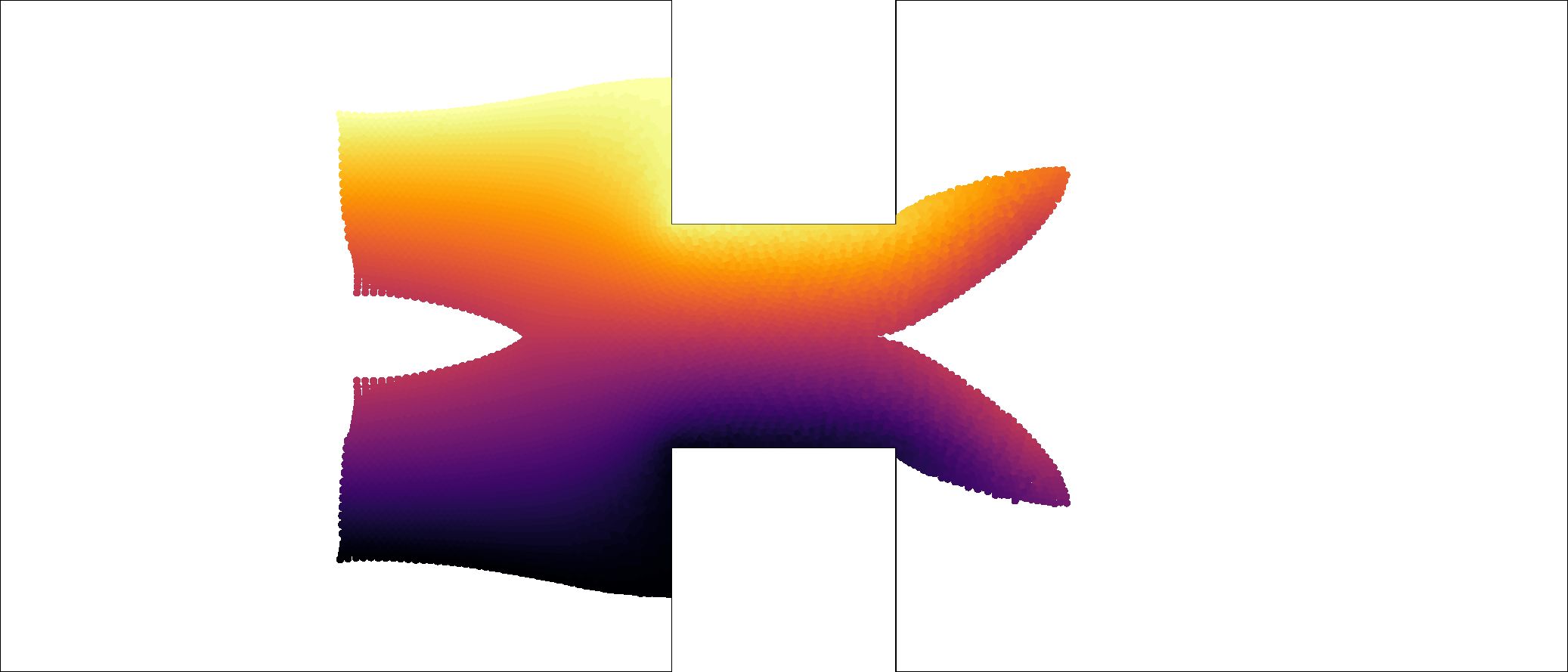}&
   \includegraphics[width=.32\textwidth]{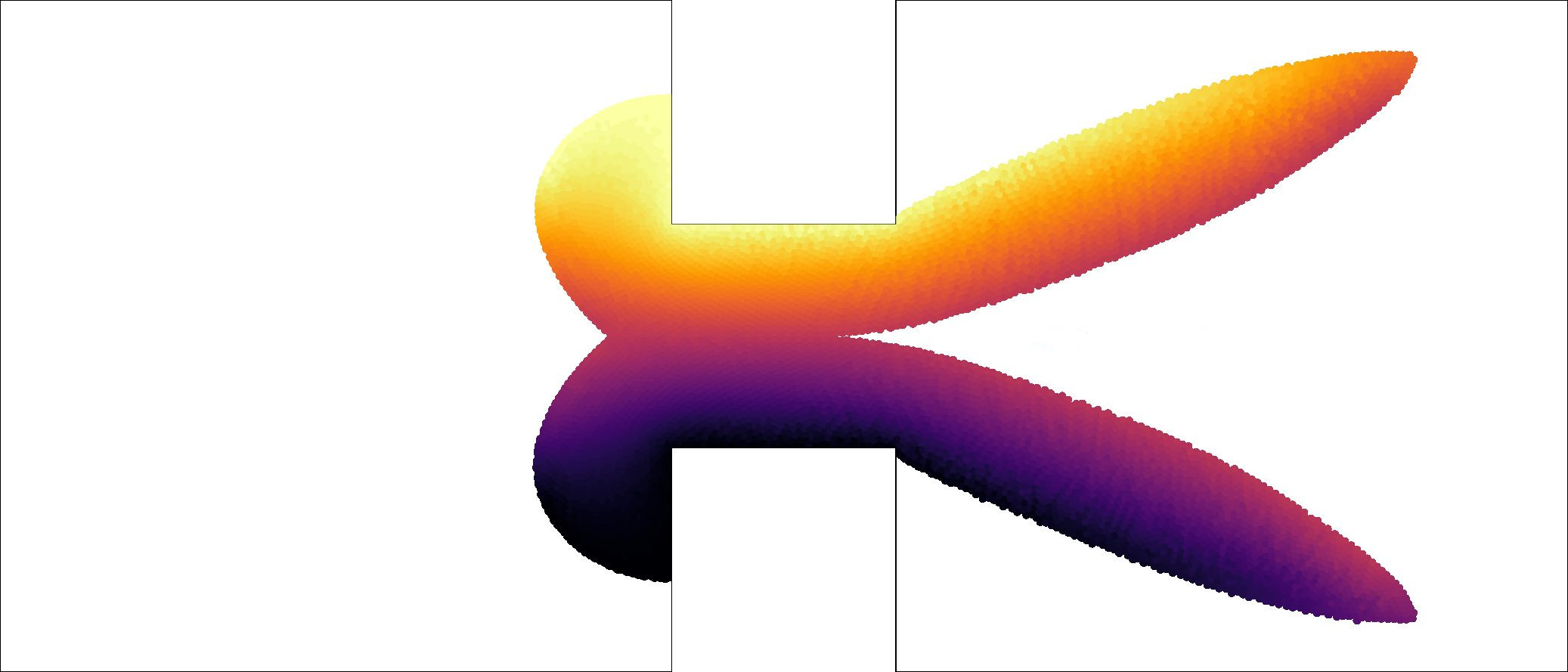}\\
   \includegraphics[width=.32\textwidth]{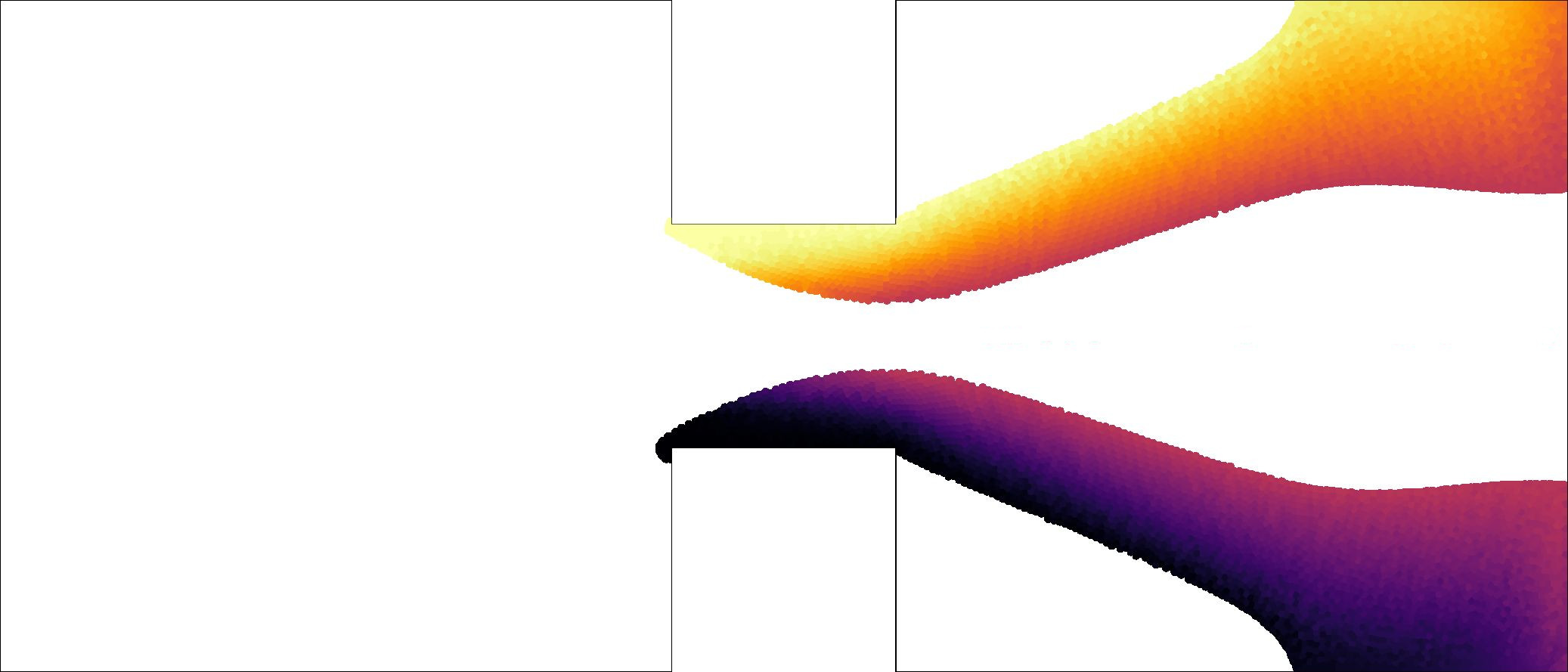}&
   \includegraphics[width=.32\textwidth]{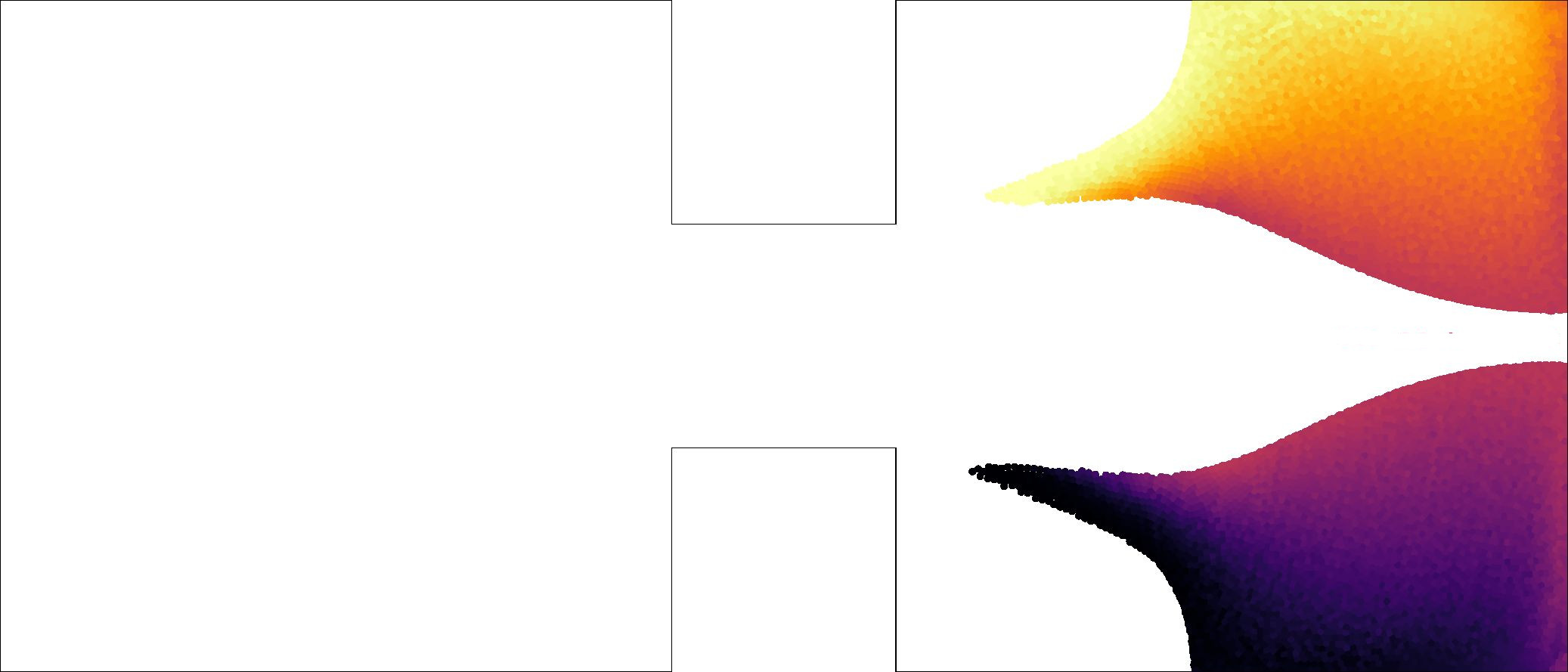}&
   \includegraphics[width=.32\textwidth]{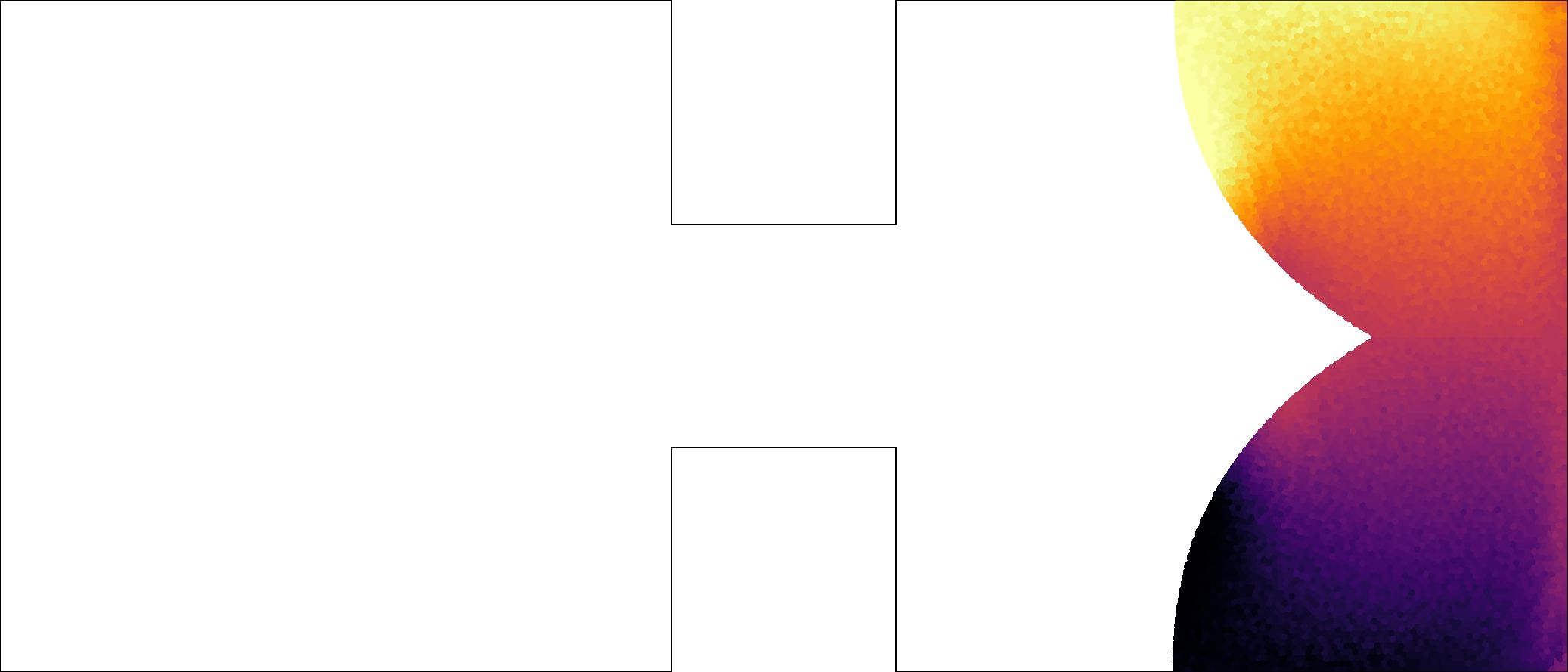}\end{tabular}}
   \caption{The distribution of the crowd computed at 6 different
     timesteps, with $h = \frac{\alpha}{80}$.  \label{fig:bimodal-high}}
 \end{figure}
 
 \begin{figure}
   \begin{center}
     \centering
     \resizebox{.93\textwidth}{!}{
       \begin{tabular}{cc}
         \includegraphics[width=.4\textwidth]{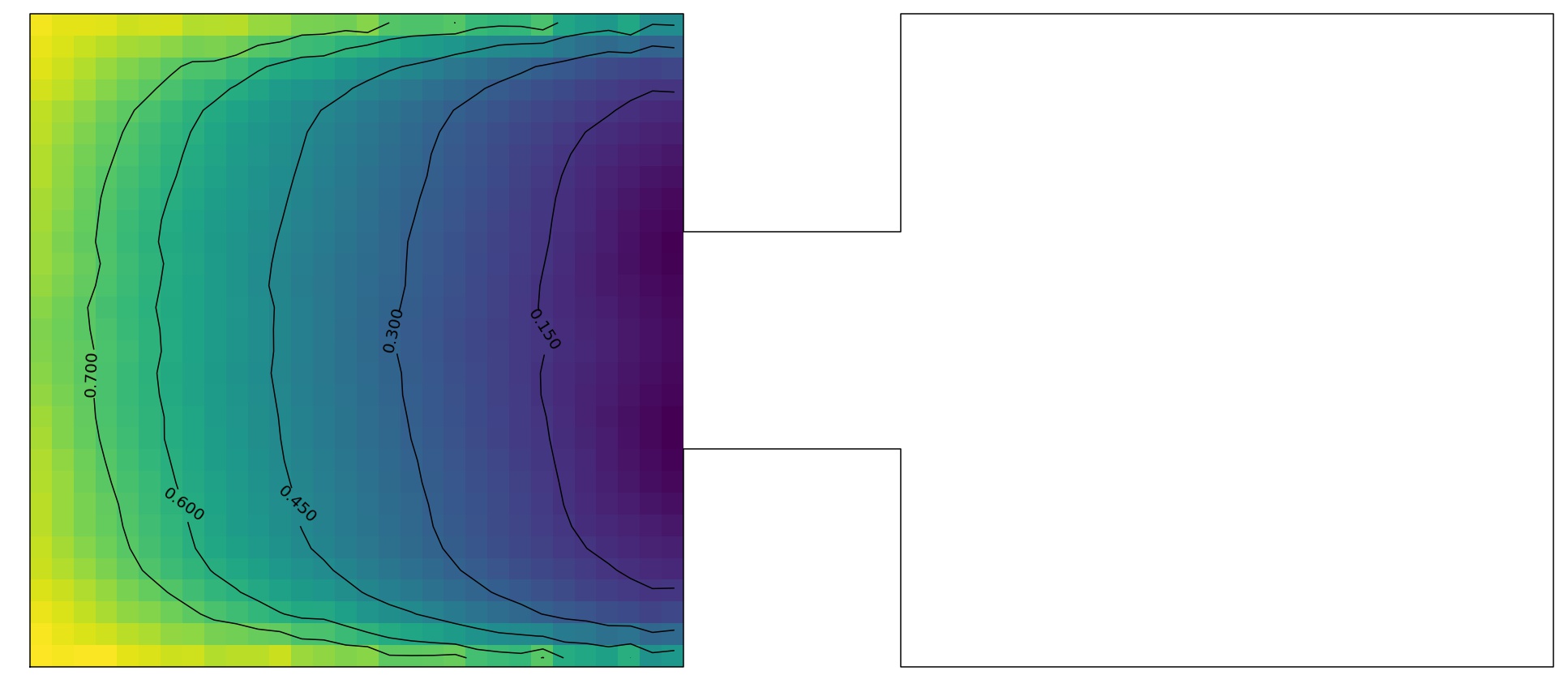}&
         \includegraphics[width=.4\textwidth]{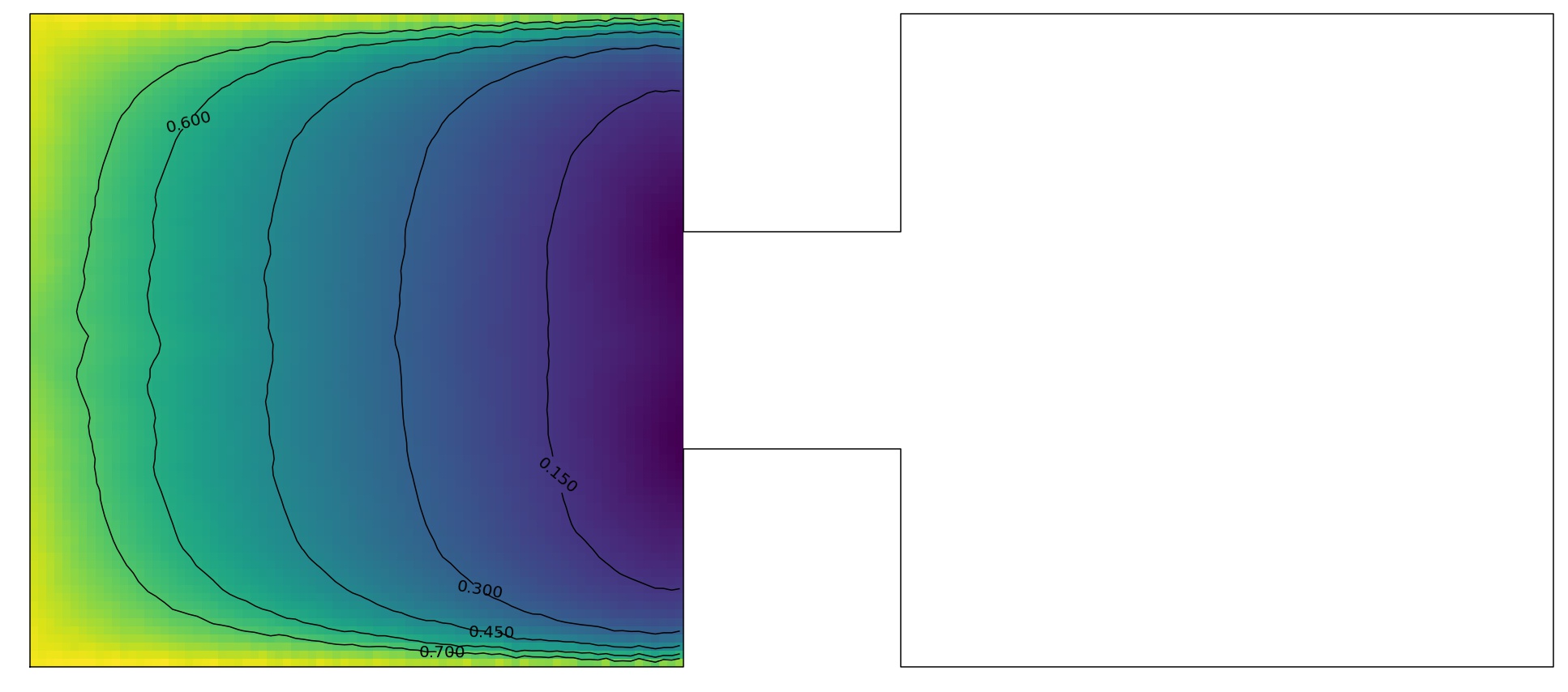} \\
   \includegraphics[width=.4\textwidth]{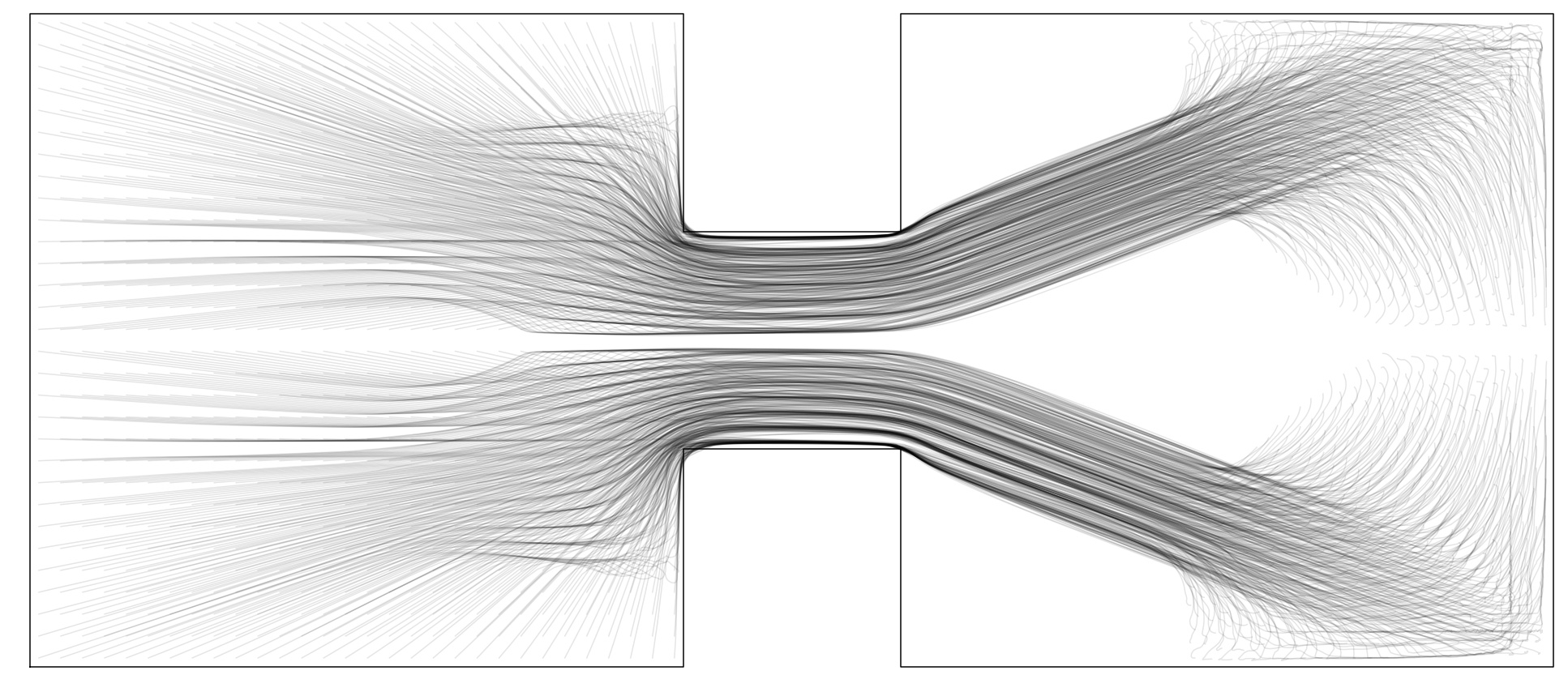}&
   \includegraphics[width=.4\textwidth]{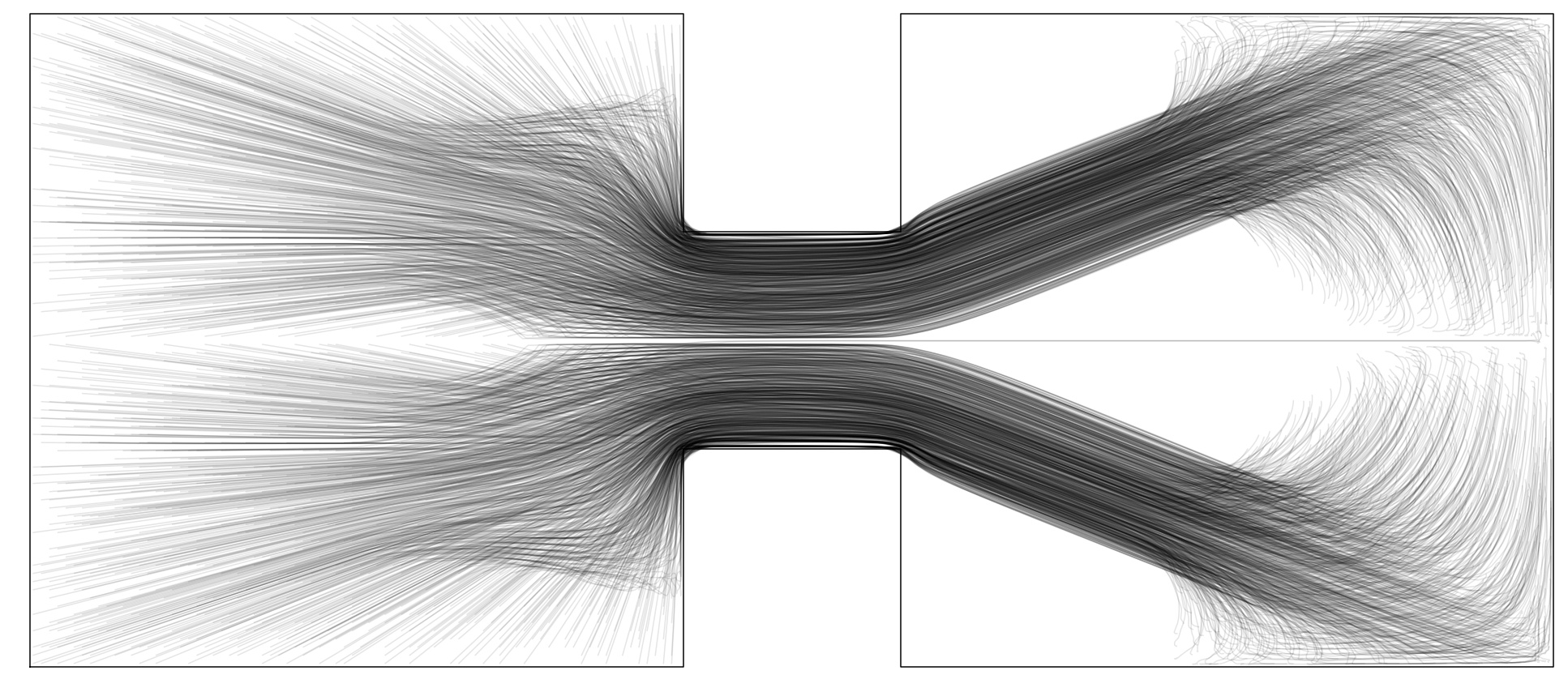} \\
   \includegraphics[width=.4\textwidth]{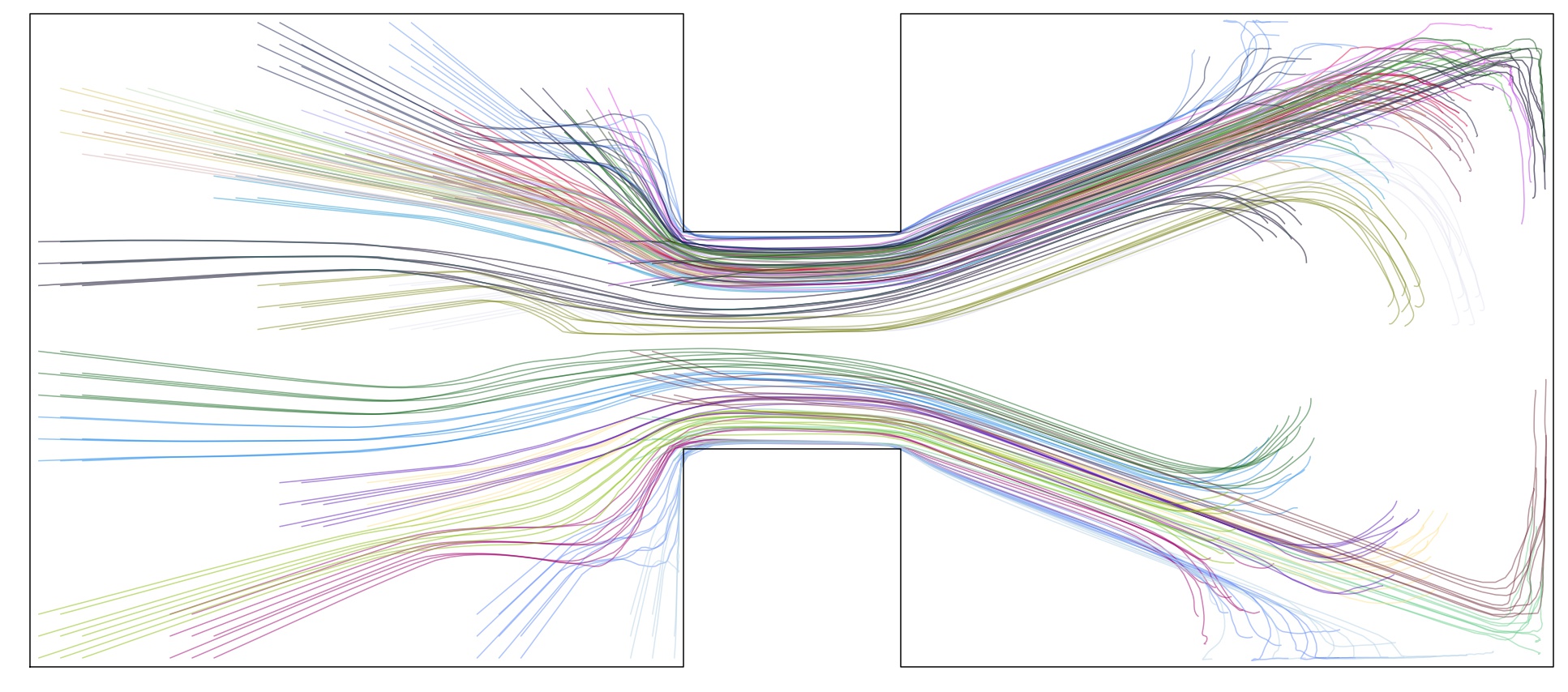}&
   \includegraphics[width=.4\textwidth]{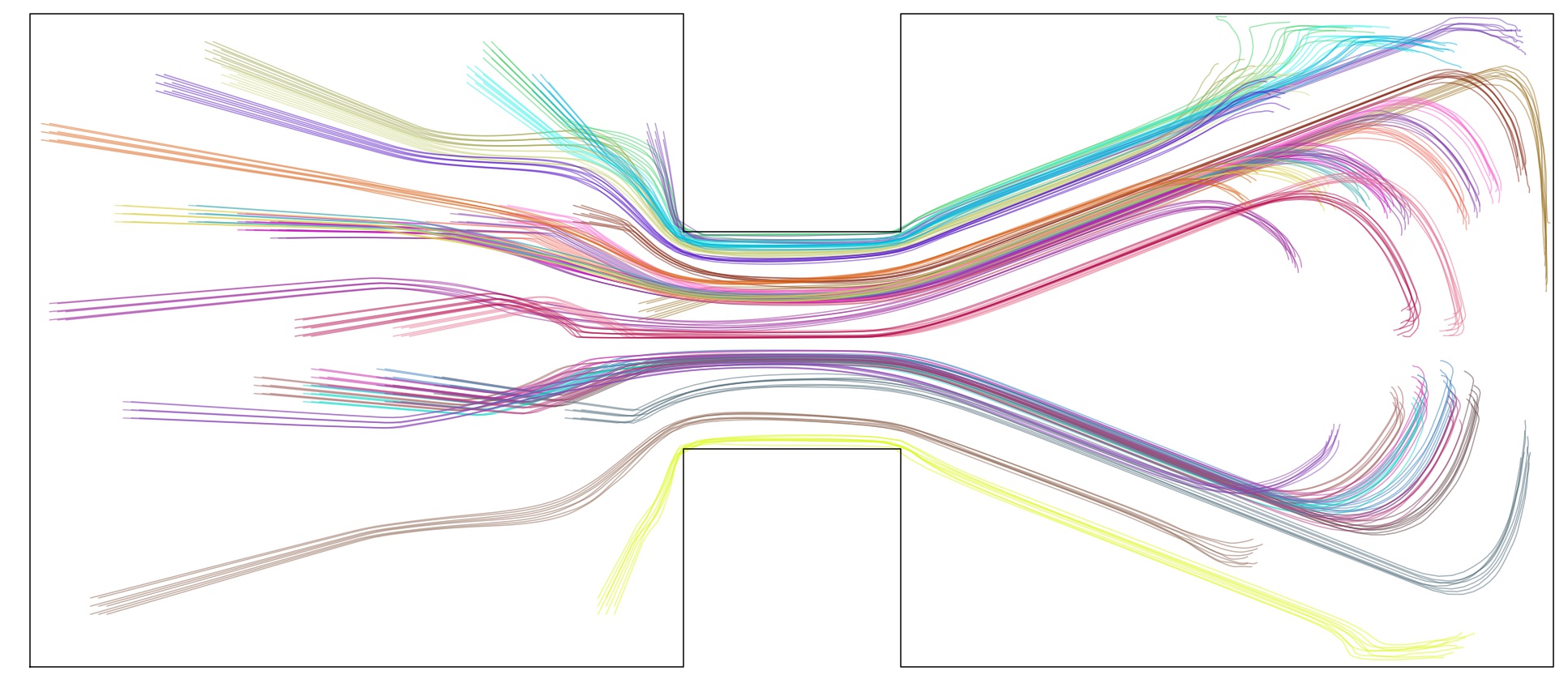}
   \end{tabular}}
   \caption{The left (resp. right) column corresponds to $h =
     \frac{\alpha}{30}$ (resp. $h = \frac{\alpha}{80}$). The first row
     display the timeout function defined in \eqref{eq:timeout}, which
     measures the time taken by a particle to leave the corridor. The
     second row displays the trajectories of all particles. The third
     row shows the trajectories of 20 randomly chosen cluster of
     particles.
     \label{fig:timeout-trajectories}}
   \end{center}
 \end{figure}
 
 \paragraph{Visualization} In Figures  \ref{fig:bimodal-low} and  \ref{fig:bimodal-high},
 we visualize the distribution of the crowd at the timesteps $t_i =
 \frac{i/5} T$ for $0\leq i\leq 5$, for two space discretizations $h =
 \frac{\alpha}{30}$ and $h=\frac{\alpha}{80}$. In
 Figure~\ref{fig:timeout-trajectories}, we highlight some features of
 the Lagrangian trajectory. First, given a particle $x_i^0$, we can
 compute the minimum time required for the particle to enter the right room: 
 \begin{equation} \label{eq:timeout}
   \tau_i := \tau \min \{ k \in \mathbb{N} \mid x_i^k \in \Omega_r \}.
 \end{equation}
 The exit time $\tau_i$ is displayed as a function of the particle
 coordinate $x_i^0$ at time $t=0$ in the first row of
 Figure~\ref{fig:timeout-trajectories}. This figure shows (as one
 could expect) that the exit time is not proportional to the distance
 to the ``door'' $\Omega_\ell \cap \Omega_c$, as people in front 
 tend to escape faster than those on side of the door. Finally,  the
 next two rows of \label{fig:timeout-trajectories} show the trajectory
 of the particles. The trajectories seem to be regular in time, but
 also seem to depend continuously on the initial condition (except
 near the non-differentiability locus of the potential $V$).

 \begin{remark}[On the assumption \eqref{eq:cm-boundpres}] In the 2D cases treated here,
   we cannot guarantee that our numerical solutions converge to a
   solution of the crowd motion equation since convergence requires
   the estimate \eqref{eq:cm-boundpres}. However, it is quite easy to
   see that this estimate holds if one is able to prove that the
   diameter of the Laguerre cells is bounded uniformly by $C
   (1/N)^{1/d}$ -- this is what is established in 1D in
   Section~\ref{sec:bounds}. Figure~\ref{fig:bimodal-low} (and in fact all our
   simulations) suggest that such an estimate is satisfied in practice.
 \end{remark}
 
 \begin{remark}[Lagrangian interpretation] \label{rem:crowdlag}
   The Eulerian crowd-motion equation \eqref{eq:cm-pde-press} can be
   turned into a Lagrangian equation by introducing the map $s_t \in
   \mathrm{L}^2(\rho_0,\Omega)$, which describes the displacement of
   the crowd from its position at time $t=0$ (more precisely, $s_t(x)$
   is the position at time $t$ which was at $x$ at time
   $0$). Formally, $s$ should satisfy the following system
   \begin{equation}\label{eq:crowd-lag}
     \begin{cases}
       \dot{s} = v\circ s \\
       s_0 = \mathrm{id} \\
       \rho = s_{\#} \rho_0 \\
   v = -\nabla p - \nabla V \\
   \rho \leq 1, p\geq 0, p(1-\rho) = 0,
     \end{cases}
   \end{equation}
and we expect that the numerical solution, shown in
Figure~\ref{fig:timeout-trajectories}, provides a piecewise-constant
(in space) approximation to $s$. We note however that the system
\eqref{eq:crowd-lag} has not been studied; showing existence of
solutions to this system would require to better understand the
regularity of the pressure $p$ appearing in \eqref{eq:cm-pde-press}.
 \end{remark}

 \subsection{Numerical experiments (diffusion)}
 In this paragraph, we consider $\Omega\subseteq \Rsp^2$ a compact
 domain, and we let $F$ be Boltzmann's functional. We consider the
 discretization of the heat equationl explained above: an initial
 point set $X^0 = (x_1^0,\hdots,x_N^0)$ is evolved through the ODE
 system \eqref{eq:adv-diff-ode} (with $V=0$), which we discretize
 again using a simple explicit Euler scheme:
 $$ \frac{x_i^{k+1} - x_i^k}{\tau} = - \nabla_{x_i}
 F_\eps(x_1^k,\hdots,x_{N_h}^k). $$

 In the numerical example presented in
 Figures~\ref{fig:heat30},\ref{fig:heat80} and \ref{fig:heatlag}, the
 initial density is uniform over a disk $D$, and approximated by the
 uniform measure over $h\Zsp^2 \cap D$. Despite the lack of
 convergence result in 2D, one can observe the consistency between the
 simulations with $h=\frac{1}{30}$ and $h=\frac{1}{80}$. Some of the
 cells near the boundary of the disk $D$ are very elongated; however
 this does not a priori prevent Assumption \eqref{eq:assump-diff} to
 hold, since there are few elongated cells and the assumed bound is on
 a mean quantity.

 \begin{remark}[Lagrangian interpretation] \label{rem:lagheat}
   The trajectories we construct (displayed in
   Figure~\ref{fig:heatlag}) should not be interpreted as realizations
   of solutions of the stochastic ODE associated with the heat
   equation. As in remark~\ref{rem:crowdlag}, we expect (but do not
   prove) that our numerical solutions actually approximate the
   solution to a Lagrangian equation which can be derived from the
   heat equation, namely
   \begin{equation}\label{eq:heat-lag}
     \begin{cases}
       \dot{s} = v\circ s \\
       s_0 = \mathrm{id} \\
   \rho = s_{\#} \rho_0 \\
   v = - \nabla \log\rho. \\
     \end{cases}
   \end{equation}
   In contrast with Remark~\ref{rem:crowdlag}, the existence of
   solutions to \eqref{eq:heat-lag} has been established in an article
   of Evans, Gangbo and Savin \cite{evans2005diffeomorphisms},
   assuming that the initial density $\rho_0$ is bounded from above
   and below. Their result can also be extended to some non-linear
   diffusion equations, under assumptions on the nonlinearity. This
   Lagrangian point of view has already been used to construct
   numerical schemes for nonlinear diffusion equations, see
   \cite{junge2017fully}.
\end{remark}
 
 \begin{figure}
   \begin{center}
   \includegraphics[width=.32\textwidth]{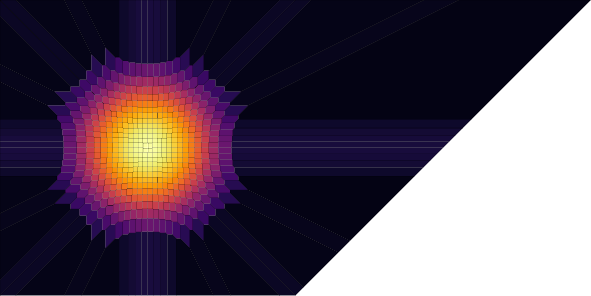}
   \includegraphics[width=.32\textwidth]{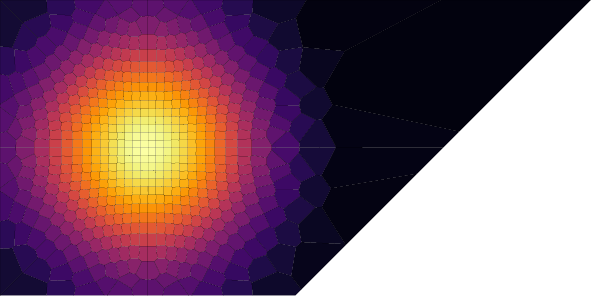}
   \includegraphics[width=.32\textwidth]{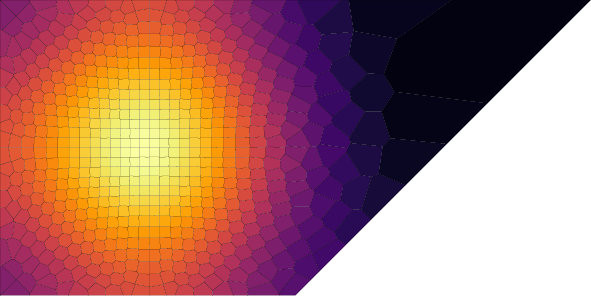}\\
   \includegraphics[width=.32\textwidth]{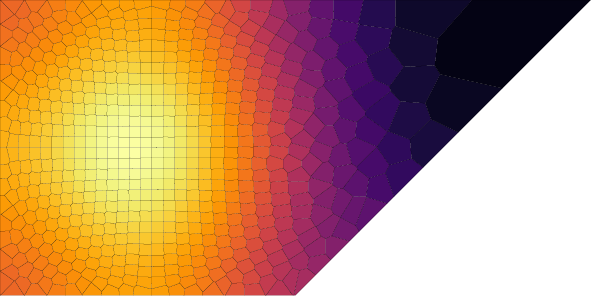}
   \includegraphics[width=.32\textwidth]{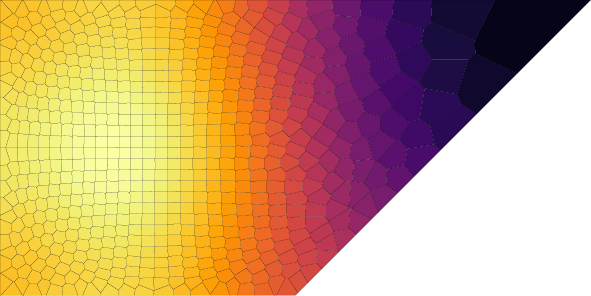}
   \includegraphics[width=.32\textwidth]{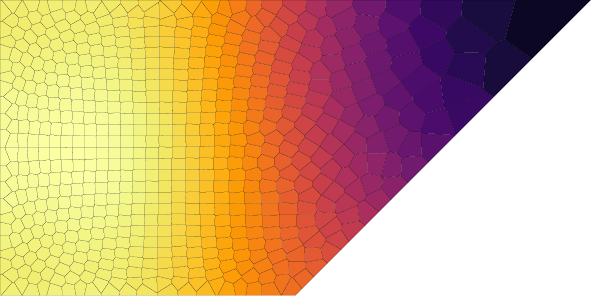}
   \end{center}
   \caption{Simulation of the heat equation, at several timesteps, for
     $h=\frac{1}{30}$. The color of the cell $L_i$ is proportional to
     $\exp(-\frac{1}{2\eps}(\nr{\beta_i(X) - x_i}^2 + \psi_i))$ (see
     \eqref{eq:ent-my-dual}). For better visibility, the density is
     represented on a color scale where the current maximum is
     always labeled with the same color (yellow). \label{fig:heat30}}
 \end{figure}

 \begin{figure}
      \begin{center}
   \includegraphics[width=.32\textwidth]{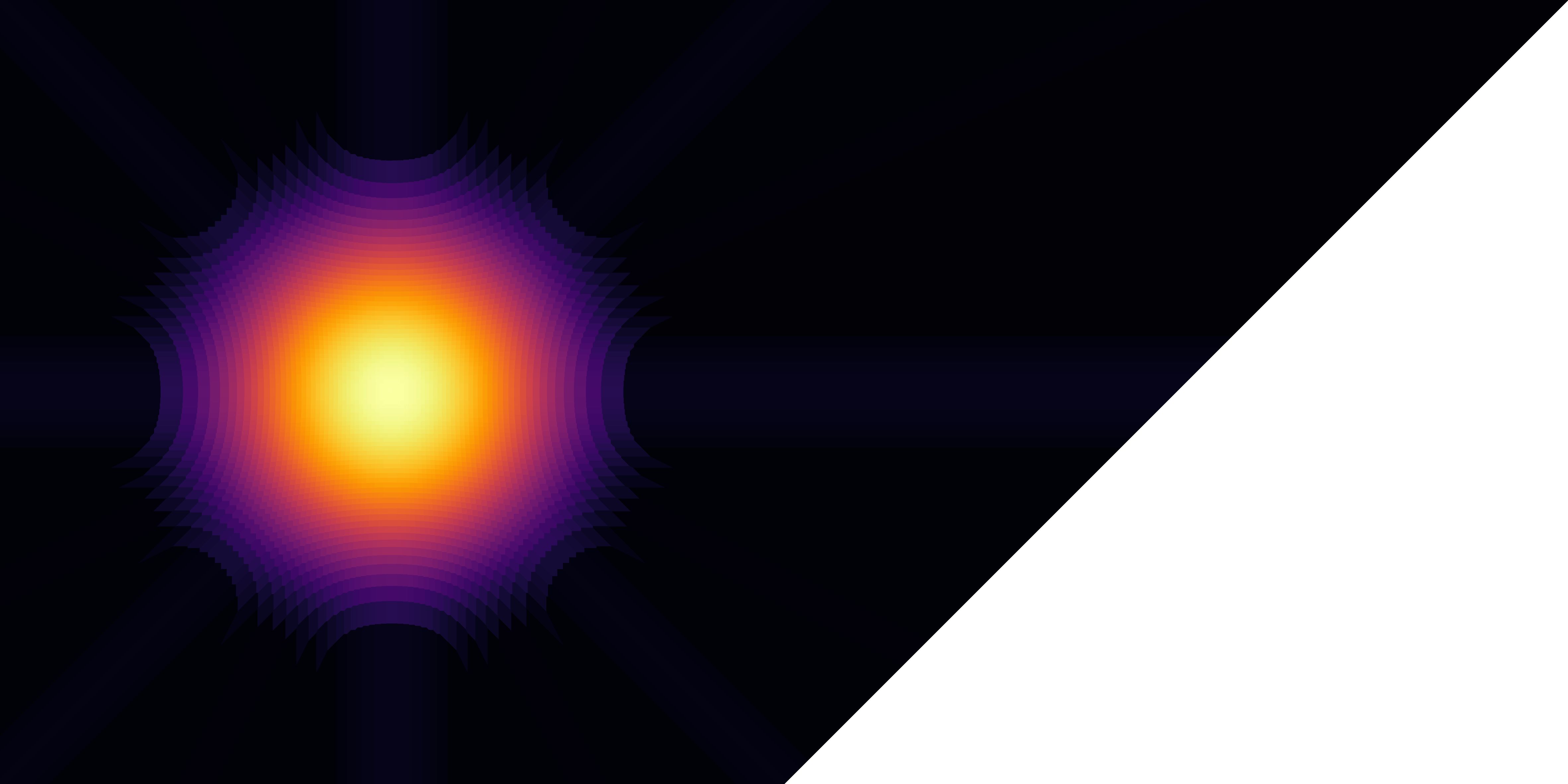}
   \includegraphics[width=.32\textwidth]{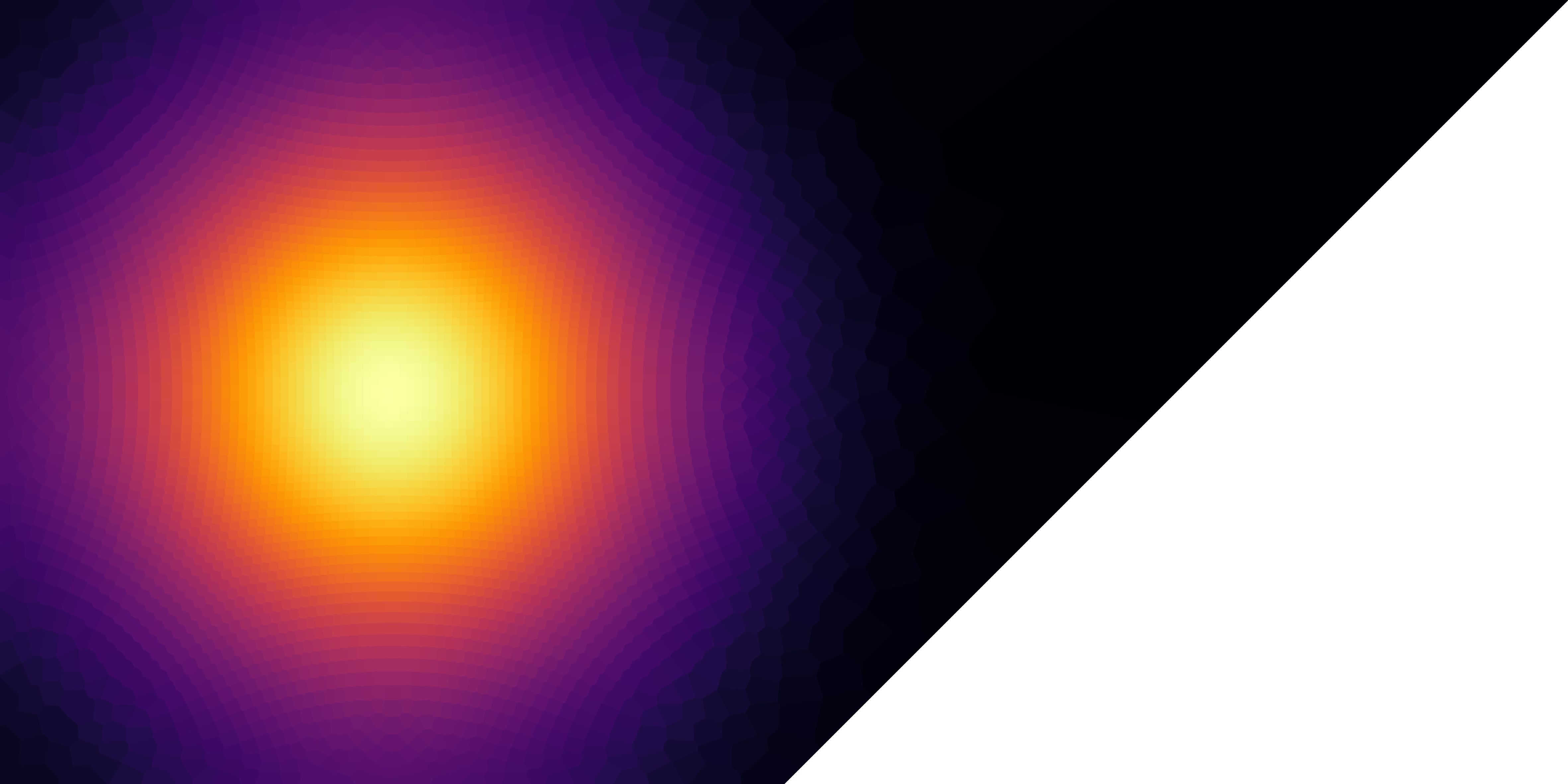}
   \includegraphics[width=.32\textwidth]{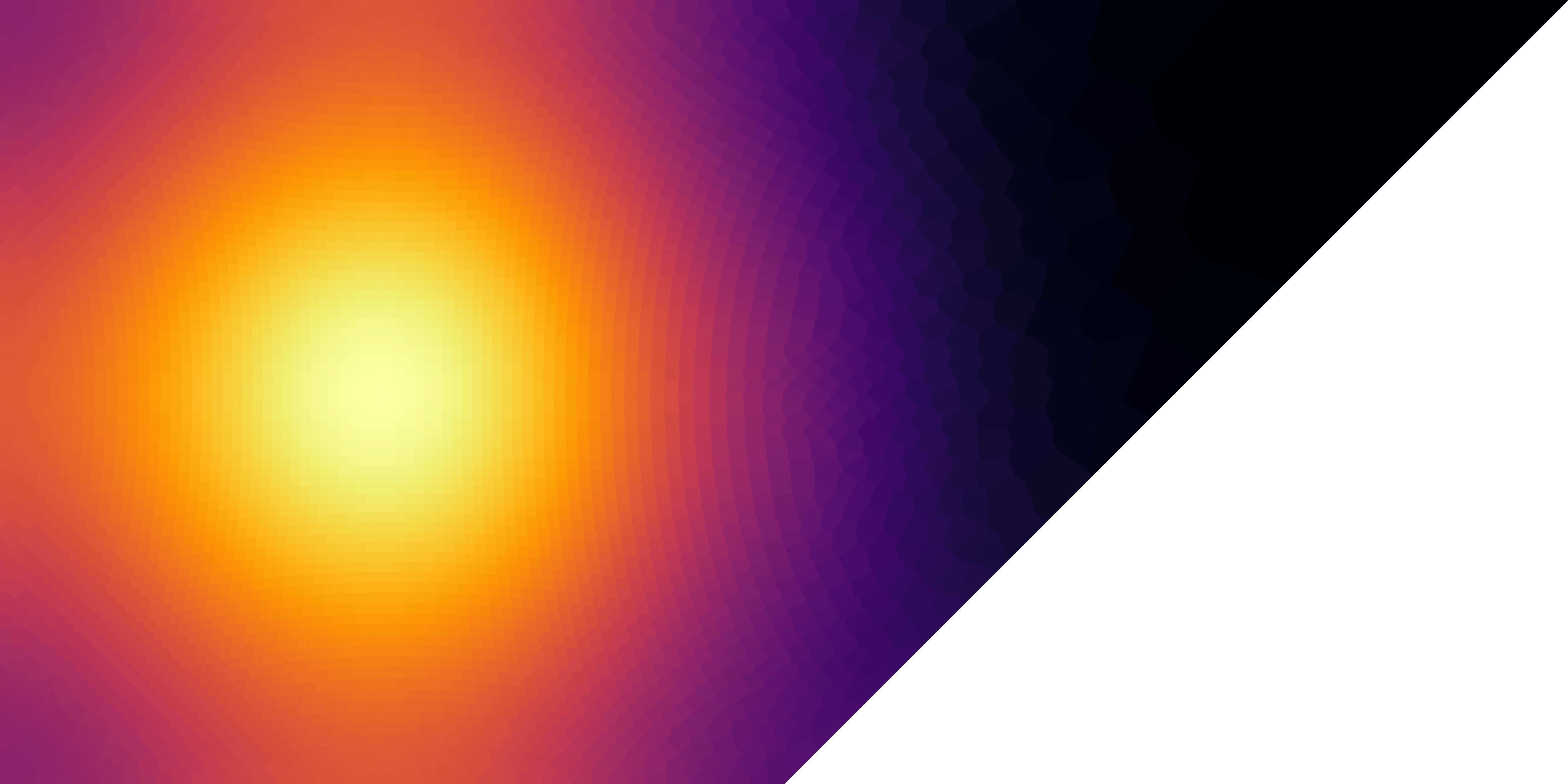}\\
   \includegraphics[width=.32\textwidth]{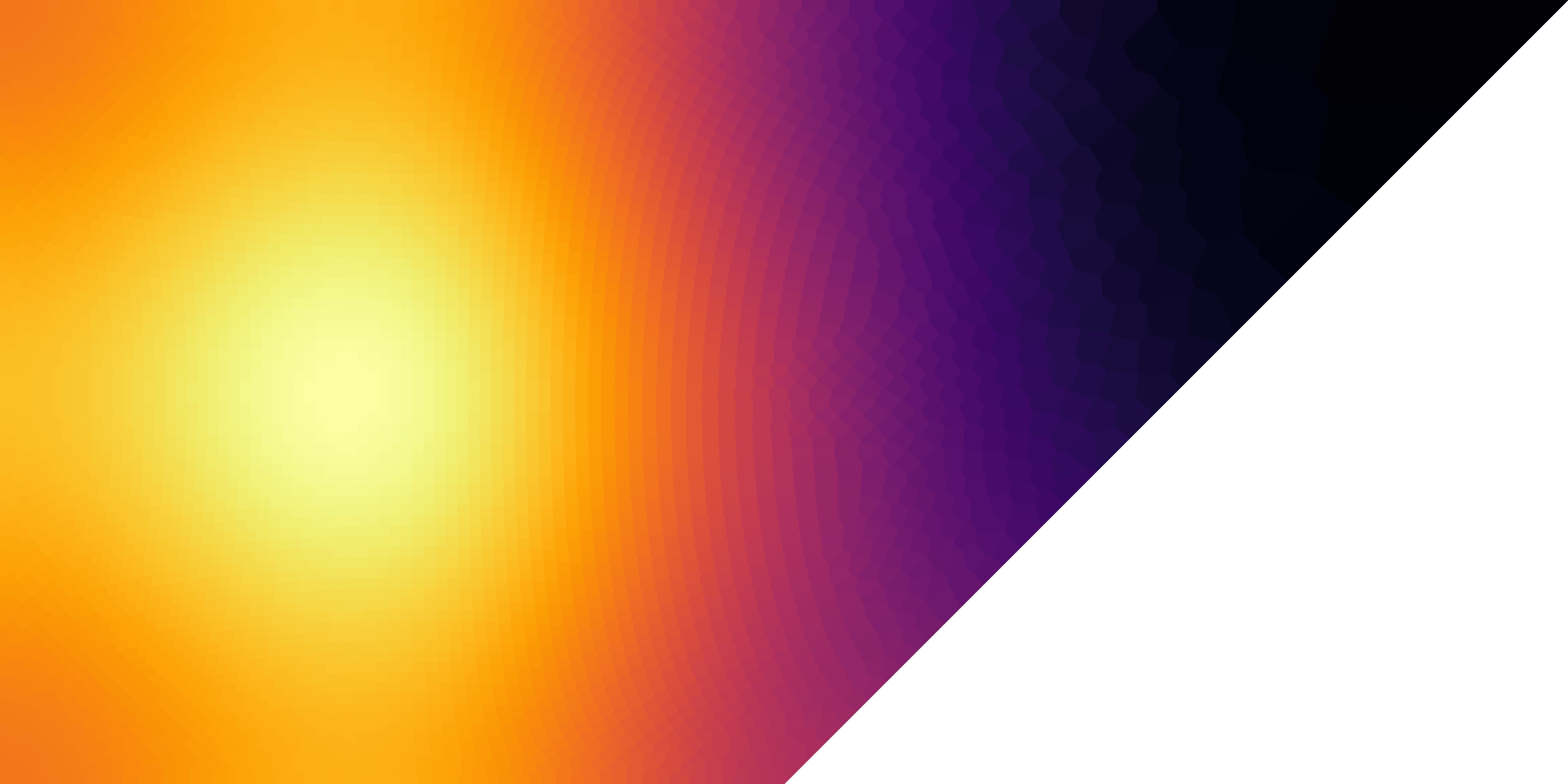}
   \includegraphics[width=.32\textwidth]{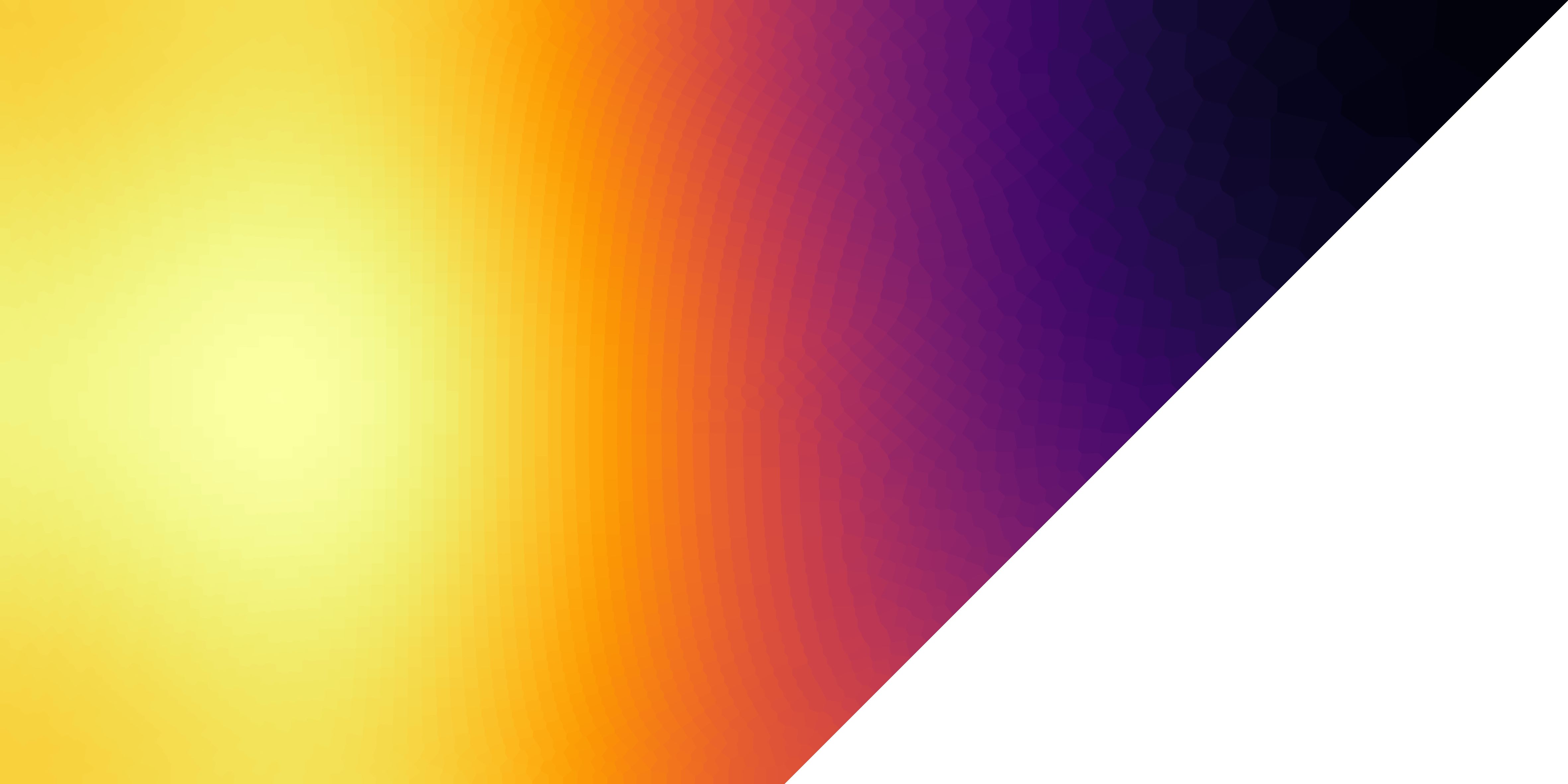}
   \includegraphics[width=.32\textwidth]{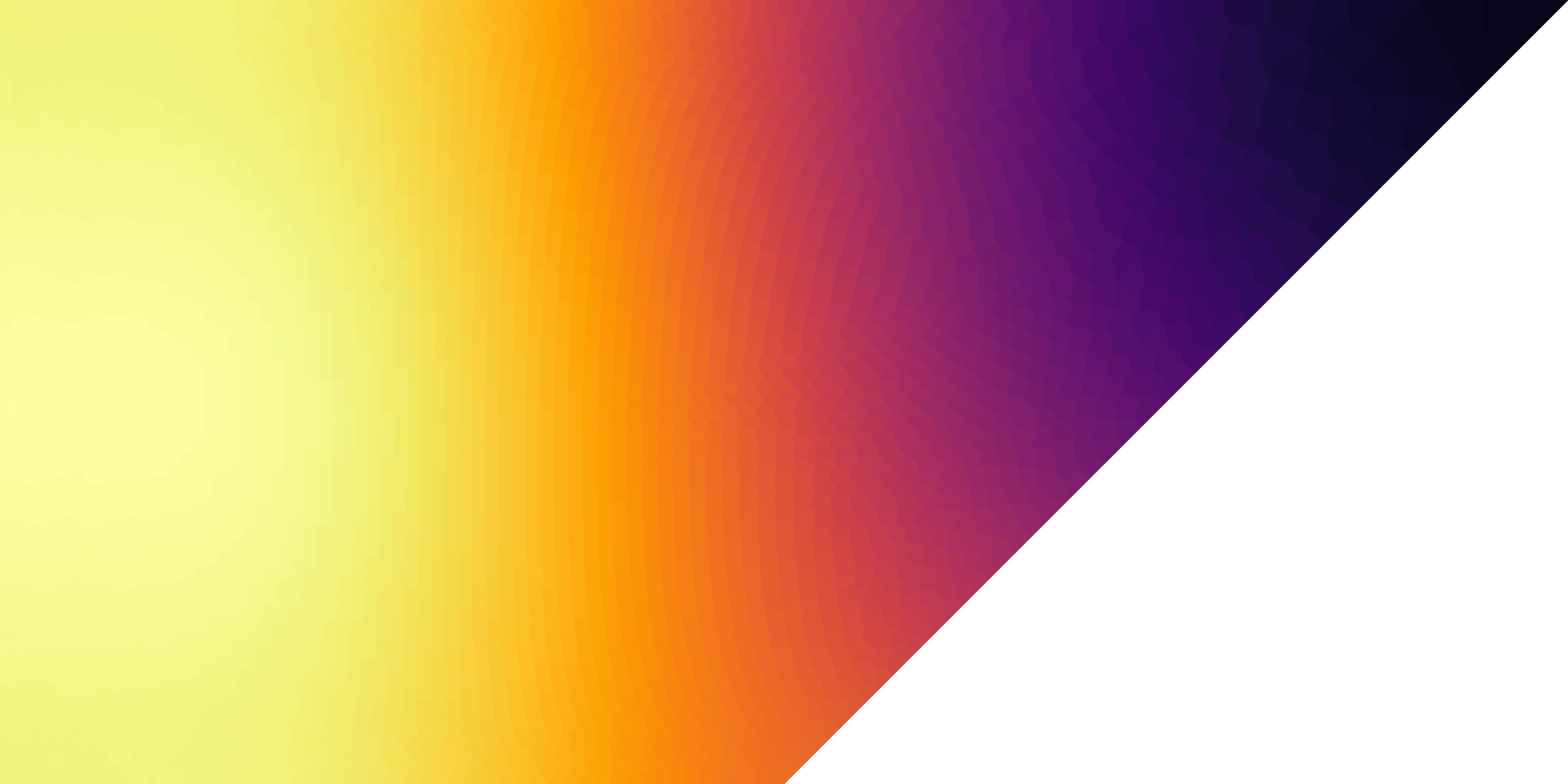}
      \end{center}
   \caption{Simulation of the heat equation, at several timesteps, for
     $h=\frac{1}{80}$. \label{fig:heat80}}
   \vspace{-.5cm}
 \end{figure}

  \begin{figure}
   \begin{center}
     \includegraphics[width=.4\textwidth]{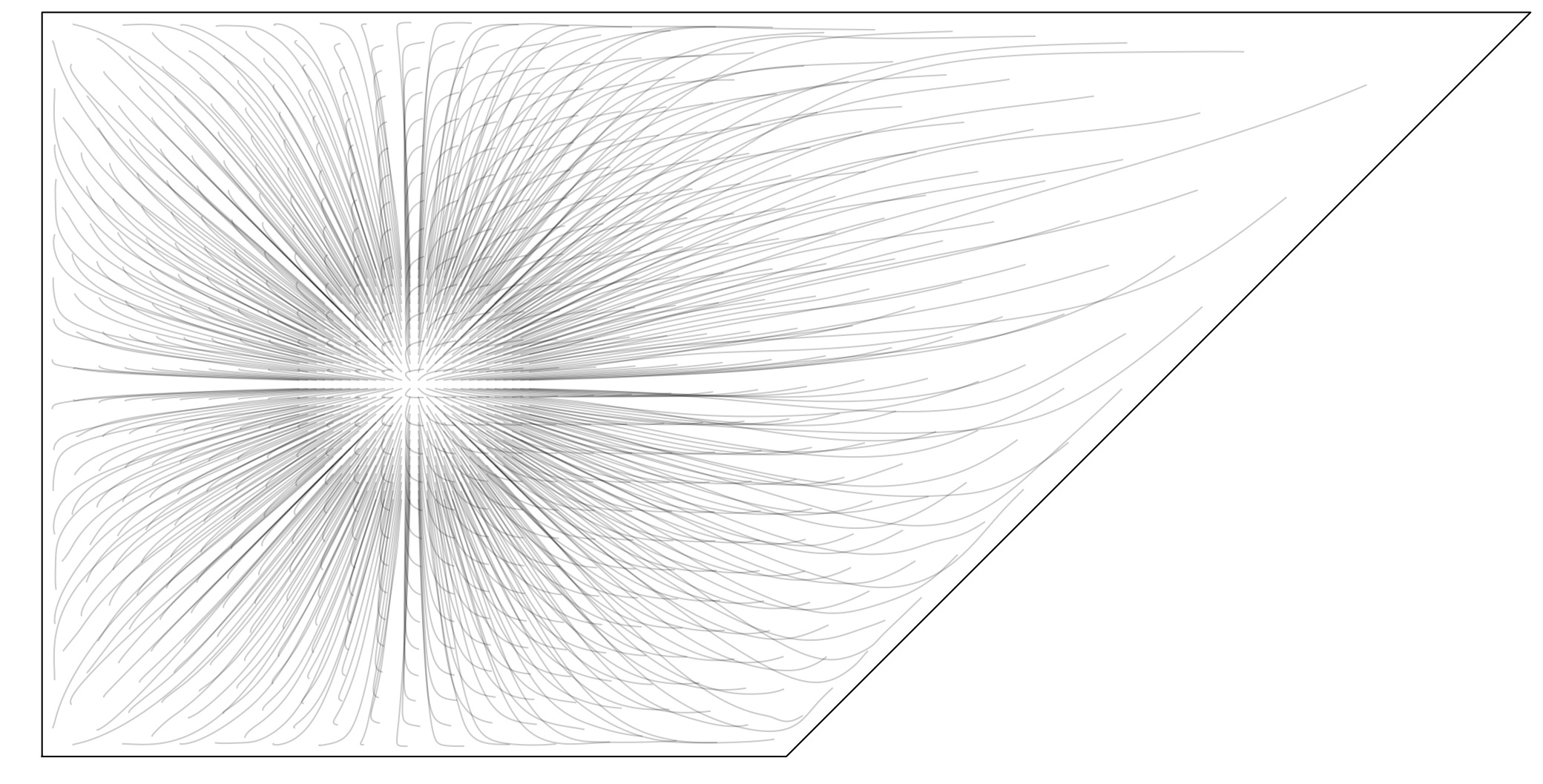}
     \includegraphics[width=.4\textwidth]{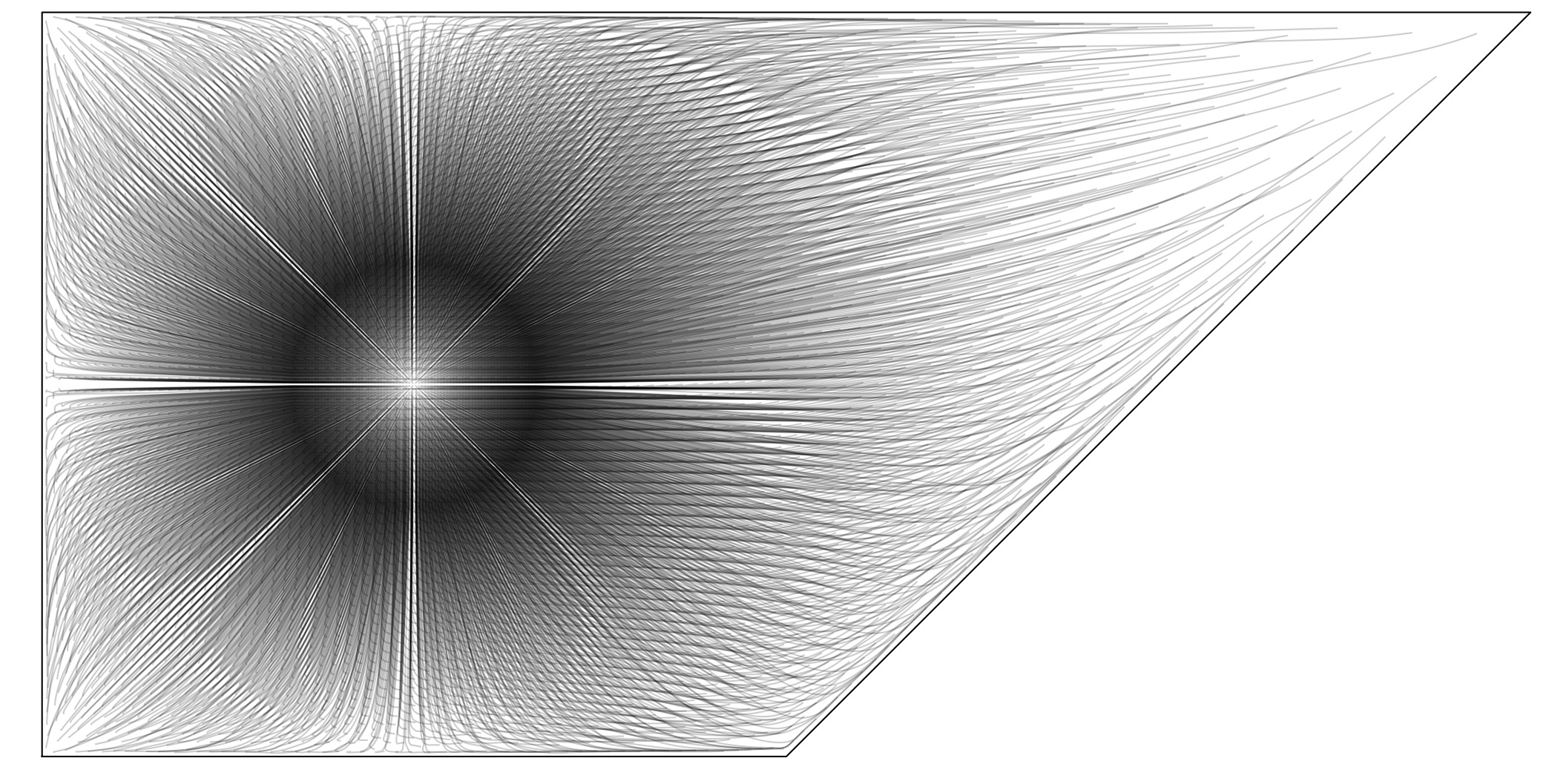}
   \end{center}
   \caption{Trajectories of particles along the heat flow (see Rem. \ref{rem:lagheat}). \emph{Left: $h=\frac{1}{30}$, Right: $h=\frac{1}{80}$}. \label{fig:heatlag}}
   \end{figure}

\paragraph*{Acknowledgements}
This work has been supported by Agence nationale de la recherche
(ANR-16-CE40-0014 - MAGA - Monge-Amp\`ere et G\'eom\'etrie
Algorithmique). The authors would like to thank the anonymous referees
for several remarks which lead to an improvement of the manuscript.

\bibliographystyle{plain}
\bibliography{biblio}

\end{document}